\def\NM{{\mathbb{N}}}
\def\GM{{\mathbb{G}}}
\def\QM{{\mathbb{Q}}}
\def\FM{{\mathbb{F}}}
\def\ZM{{\mathbb{Z}}}
\def\ZG{{\mathfrak Z}}
\def\SC{{\mathcal S}}
\def\HC{{\mathcal H}}
\def\NC{{\mathcal N}}
\def\IC{{\mathcal I}}
\def\OC{{\mathcal O}}
\def\MC{{\mathcal M}}
\def\EC{{\mathcal E}}
\def\UC{{\mathcal U}}
\def\subsetneq{\varsubsetneq}
\def\simto{\buildrel\hbox{\tiny{$\sim$}}\over\longrightarrow}
\def\injo{\hookrightarrow}
\def\id{\mathop{\mathrm{Id}}\nolimits}
\def\ba{\backslash}
\def\wt{\widetilde}
\def\wh{\widehat}
\def\o#1{\overline{#1}}
\def\application#1#2#3#4#5{\begin{array}{rcl}
                            #1 \;\;\; #2 & \to &  #3 \\
                              #4 & \mapsto & #5 
                            \end{array}}
\def\To#1{\buildrel\hbox{\tiny{$#1$}}\over\longrightarrow}
\def\to{\rightarrow}
\def\Mod#1{\mathop{\hbox {\sl #1-Mod}}}     
\def\ker{{\rm ker}}
\def\im{{\rm im}}
\def\Hom{\mathop{\hbox{\rm Hom}}\nolimits}
\def\Aut{\mathop{\hbox{\rm Aut}}\nolimits}
\def\Mod{{\rm Mod}}
\def\Rep{{\rm {R}ep}}
\def\Irr#1#2{\mathop{{\rm Irr}_{#1}\left(#2\right)}}
\def\Ind#1#2{\hbox {\rm Ind}_{#1}^{#2}}
\def\cind#1#2#3{\hbox {\rm ind}_{#1}^{#2}\>\!\left(#3\right)} 
\def\ip#1#2#3{\hbox {\sl i}_{#1}^{#2}\>\!(#3)}  
\def\dim{\mathop{\mbox{\rm dim}}\nolimits}
\def\dim{{\rm dim}}
\def\ini{\setcounter{equation}{\value{subsubsection}}\addtocounter{subsubsection}{1}}
\renewcommand{\subsubsection}{\@startsection{subsubsection}{3}{\parindent}{-\baselineskip}{-0.01\baselineskip}{\bf}}
\renewcommand*{\@seccntformat}[1]{%
  \csname the#1\endcsname\
}
\def\ali{\subsubsection{}\setcounter{equation}{0}}
\def\alin#1{\setcounter{equation}{0}\subsubsection{\it  #1}. --- }
\newtheoremstyle{th}
  {\baselineskip}{.5\baselineskip}{\itshape}
  {\parindent}{\bf}
  { ---}{.5em}{}
\newtheoremstyle{def}
  {\baselineskip}{\baselineskip}{}
  {\parindent}{\bf}
  {--}{.5em}{}
\newtheoremstyle{th*}
  {.5\baselineskip}{.5\baselineskip}{\itshape}
  {\parindent}{\bf}
  { ---}{.5em}{}
\newtheoremstyle{remark*}
  {.5\baselineskip}{.5\baselineskip}{}
  {\parindent}{\bf}
  {--}{.5em}{}
\newtheoremstyle{remark}
  {.5\baselineskip}{.5\baselineskip}{}
  {\parindent}{\bf}
  { ---}{.5em}{}
\theoremstyle{th}
\newtheorem{lemme}[subsubsection]{\it Lemma.\bf}
\newtheorem{prop}[subsubsection]{\it Proposition.\bf}
\newtheorem{coro}[subsubsection]{\it Corollary.\bf}
\theoremstyle{def}
\newtheorem{DEf}[subsubsection]{\it D{e}finition.\bf}
\theoremstyle{remark}
\newtheorem{rema}[subsubsection]{\it Remark.\bf}  
\newtheorem{exam}[subsubsection]{\it Example.\bf}   
\theoremstyle{th*}
\newtheorem*{thm}{\it Theorem.}
\newtheorem*{lem}{\it Lemma.}
\newtheorem*{cor}{\it Corollary.}
\newtheorem*{con}{\it Conjecture.}
\newtheorem*{expec}{\it Expectation.}
\theoremstyle{remark*}
\newtheorem*{rak}{\it Remark.}
\newcommand{\findem}{\hfill$\Box$\par\medskip}
\newcommand{\dem}{\indent {\it Preuve :} \rm }
\title{A functoriality principle for blocks of $p$-adic linear groups}
\author{Jean-Fran\c{c}ois Dat}
\date{}
\begin{document}
\maketitle
\bibliographystyle{plain}

\def\la{\langle}
\def\ra{\rangle}
\def\knr{{\wh{K^{nr}}}}
\def\ka{\wh{K^{ca}}}

\def\dd{D_d^\times}
\def\mdro{\MC_{Dr,0}}
\def\mdrn{\MC_{Dr,n}}
\def\mdr{\MC_{Dr}}
\def\mlto{\MC_{LT,0}}
\def\mltn{\MC_{{\rm LT},n}}
\def\mltno{\MC_{{\rm LT},n}^{(0)}}
\def\mlt{\MC_{\rm LT}}
\def\mltK{\MC_{LT,K}}
\def\LJ{{\rm LJ}}
\def\JL{{\rm JL}}
\def\SL{{\rm SL}}
\def\GL{{\rm GL}}
\def\PGL{{\rm PGL}}

\def\Ql{\QM_{\ell}}
\def\Zl{\ZM_{\ell}}
\def\Fl{\FM_{\ell}}
\def\oQl{\o\QM_{\ell}}
\def\oZl{\o\ZM_{\ell}}
\def\bZl{\o\ZM_{\ell}}
\def\bQl{\o\QM_{\ell}}
\def\oFl{\o\FM_{\ell}}
\def\mltnc{\wh\MC_{{\tiny{\rm LT}},n}}
\def\mltnco{\wh\MC_{{\tiny{\rm LT}},n}^{(0)}}

\def\sp{{\rm sp}}
\def\Ens{{\SC ets}}
\def\Coef{{\rm Coef}}

\begin{abstract}
Bernstein blocks of complex representations of $p$-adic reductive
groups have been computed in a large number of examples, in part
thanks to the theory of types \`a la Bushnell and Kutzko. The output of
these purely representation-theoretic computations is that
 many of these blocks are equivalent. The motto of this paper is that
 most of these coincidences are explained, and many more can be
 predicted, by a functoriality principle involving dual groups.
We prove a precise statement for groups related to $GL_{n}$, and then
state conjectural generalizations in two directions: more general
reductive groups and/or  \emph{integral} $l$-adic representations.
\end{abstract}

\tableofcontents
\section{Main statements}

Let $F$ be a $p$-adic field and let $R$ be a commutative ring in which $p$ is invertible. 
For $\mathbf{G}$ a reductive group
over $F$, we put $G:=\mathbf{G}(F)$ and  we denote by
$\Rep_{R}(G)$ the abelian category of smooth 
 representations of $G$ with coefficients in $R$, and by $\Irr{R}{G}$ the set of
 isomorphism classes of simple objects in $\Rep_{R}(G)$. We will be
 mainly interested in the cases $R=\oQl$ or $\oZl$ or $\oFl$ for
 $\ell$ a prime number coprime to $p$.

Let us assume $R=\oQl$ for a while. For a general $\mathbf{G}$,
Bernstein \cite{bernstein} has decomposed $\Rep_{\oQl} (G)$ as a
product  
of indecomposable abelian subcategories called \emph{blocks}. 
This decomposition is  characterized by the property that two
irreducible representations belong to the same Bernstein block if 
and only if their supercuspidal supports are ``inertially equivalent''.

When $\mathbf{G}=\GL_{n}$, Bushnell and Kutzko \cite{BK} \cite{BKsst} have proved that each block
 is equivalent to the category of modules over an  algebra of the
form $ \HC(n_{1},q^{f_{1}})\otimes_{\oQl}\cdots\otimes_{\oQl} \HC(n_{r},q^{f_{r}})$ where $\HC(n,q)$
denotes the extended Iwahori--Hecke algebra of type $A_{n-1}$ and parameter $q$ (the size of the
residual field of $F$). This shows in particular that, up to taking
tensor product of categories, all blocks of linear groups look ``the same''.
More precisely, joint with Borel's theorem, their work shows that any
Bernstein block is equivalent to the \emph{principal}
block  of a product of general linear groups.
Here, as usual, the ``principal block'' is the
one that contains the trivial representation. 

The main result of this paper is a ``Langlands-dual'' explanation of
this redundancy among blocks of linear groups. It will appear as a
functoriality principle for blocks, and also will point to
particularly nice equivalences, related to the usual functoriality
principle for irreducible representations.

However, the main interest of the paper probably lies in the
conjectural natural generalizations of the main
result. These speculations will
involve more general reductive groups $\mathbf{G}$ and/or coefficients
$R=\oZl$ or $\oFl$ for $\ell$ a prime number coprime to $p$.

\subsection{Functoriality for $\oQl$-blocks of groups of $\GL$-type}

We say that $\mathbf{G}$ is ``of GL-type'' if it is isomorphic to a product
of restriction of scalars of general linear groups over finite extensions of $F$.

\alin{Langlands parametrization of  $\oQl$-blocks} \label{Langparql}
Let
$^{L}\mathbf{G}= \hat\mathbf{G}\rtimes W_{F}$  be ``the'' dual group of $\mathbf{G}$, where $W_{F}$ is
the Weil group of $F$. For a general $\mathbf{G}$, 
Langlands' parametrization
conjecture predicts the existence of  a finite-to-one map $\pi\mapsto \varphi_{\pi}$
$$\Irr{\oQl}{G}\To{} \Phi(\mathbf{G},\oQl):=\{ \varphi : W_{F}\ltimes \oQl \To{}
{^{L}\mathbf{G}}(\oQl)\}_{/\sim}$$ 
where the right hand side denotes the set of ``admissible''\footnote{``admissible''
  implies in particular that the image of $W_{F}$ consists of semi-simple elements, an
  element of $^{L}\mathbf{G}$ being
  semi-simple if its projection on any algebraic quotient $\hat \mathbf{G}\rtimes \Gamma_{F'/F}$ is
  semi-simple.} 
$L$-parameters for $\mathbf{G}$ modulo
conjugacy by $\hat\mathbf{G}(\oQl)$. 
For $\mathbf{G}$ of $\GL$-type, this parametrization follows from
the Langlands correspondence of \cite{HaTay} \cite{HeLang} for
$\GL_{n}$  via the
non-commutative Shapiro lemma, and it is a \emph{bijection}.
Moreover, this correspondence is known to be compatible with parabolic induction in the following
sense. If $\pi$ is an irreducible subquotient of some induced representation $\ip{M,P}{G}\sigma$,
then ${\varphi_{\pi}}_{|W_{F}} \sim {^{L}\iota}\circ {\varphi_{\sigma}}_{|W_{F}}$, where $^{L}\iota :
{^{L}\mathbf{M}} \injo {^{L}\mathbf{G}}$ is any embedding dual to $\mathbf{M}\injo \mathbf{G}$
(note that $\mathbf{M}$ is also of $\GL$-type). As a consequence, for two irreducible
representations $\pi,\pi'$ in the same Bernstein block, we have ${\varphi_{\pi}}_{|I_{F}}\sim
{\varphi_{\pi'}}_{|I_{F}}$, 
hence we get a decomposition
$$ \Rep_{\oQl}(G) =\prod_{\phi\in \Phi_{\rm inert}(\mathbf{G},\oQl)} \Rep_{\phi}(G)$$
where the index set $\Phi_{\rm inert}(\mathbf{G},\oQl)=H^{1}(I_{F},\hat\mathbf{G}(\oQl))^{W_{F}}$ 
is the set of $\hat\mathbf{G}$-conjugacy classes of \emph{admissible inertial parameters}, i.e.
continuous\footnote{In all this discussion $\hat\mathbf{G}(\oQl)$ is equipped with the discrete topology.}
 sections 
$I_{F}\To{}{^{L}\mathbf{G}}(\oQl)$ that admit an extension to an admissible $L$-parameter in
$\Phi(\mathbf{G},\oQl)$, 
and $\Rep_{\phi}(G)$ consists of all smooth $\oQl$-representations of $G$, any 
irreducible subquotient $\pi$ of which  satisfies ${\varphi_{\pi}}_{|I_{F}}\sim\phi$.

It is ``well known''  that $\Rep_{\phi}(G)$ is actually a Bernstein block, see Lemma 
\ref{lemmeblockQl}, so that
we get a parametrization of blocks of $\Rep_{\oQl}(G)$ by $\Phi_{\rm inert}(\mathbf{G},\oQl)$. 
In this parametrization, the principal block corresponds to the trivial parameter 
$i \mapsto (1,i)$, which may explain why it is sometimes referred to as the ``unipotent'' block.

\alin{Functorial transfer of $\oQl$-blocks} \label{transferql}
Let $\mathbf{G'}$ be another group of $\GL$-type over $F$, and suppose given a 
morphism\footnote{Since we work with the Weil form of $L$-groups, we require  that a morphism of
  $L$-groups carries semi-simple elements to semi-simple elements.}
of
$L$-groups $\xi :\, {^{L}\mathbf{G}'}\To{} {^{L}\mathbf{G}}$. The composition map $\varphi'\mapsto
\xi\circ\varphi$ on $L$-parameters translates into a map $\Irr{\oQl}{G'}\To{\xi_{*}}\Irr{\oQl}{G}$,
known as the ``(local) Langlands' transfer induced by $\xi$''. 
A natural question to ask is whether this Langlands transfer can be
extended to non-irreducible representations in a functorial way. In
general there seems to be little  hope,
but the following result shows that it becomes possible under certain circumstances.

Let us fix an admissible inertial parameter $\phi':I_{F}\To{}{^{L}\mathbf{G}}$ and put
$\phi:=\xi\circ\phi'$. 
As usual, let $C_{\hat\mathbf{G}}(\phi)$ denote the centralizer in $\hat\mathbf{G}(\oQl)$ of the image
of the inertial parameter $\phi$.

\begin{thm}
Suppose that $\xi$ induces an isomorphism $C_{\hat\mathbf{G}'}(\phi')\simto C_{\hat\mathbf{G}}(\phi)$.
Then there is an 
equivalence of categories $\Rep_{\phi'}(G')\simto \Rep_{\phi}(G)$ that interpolates the
Langlands transfer $\xi_{*}$ on irreducible representations.
\end{thm}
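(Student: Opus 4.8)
The plan is to factor the sought equivalence through a canonical ``unipotent model''. Since $\hat{\mathbf{G}}$ is a product of general linear groups, the centralizer $C_{\hat{\mathbf{G}}}(\phi)$ of the (finite) image of $\phi$ is the group of units of the centralizer algebra of $\phi(I_{F})$, hence a \emph{connected} product $\prod_{k}\GL_{n_{k}}(\oQl)$; the same holds for $C_{\hat{\mathbf{G}}'}(\phi')$. Fix an admissible $L$-parameter $\varphi'$ extending $\phi'$ and set $\varphi:=\xi\circ\varphi'$; this is an admissible extension of $\phi$, since $\xi$ preserves semisimplicity. For any Frobenius lift, conjugation by $\varphi(\mathrm{Frob})$ normalizes $\phi(I_{F})$, hence $C_{\hat{\mathbf{G}}}(\phi)$; two such elements differ by an element of $C_{\hat{\mathbf{G}}}(\phi)$, which commutes with $\phi(I_{F})$, so we obtain a well-defined homomorphism $\theta_{\phi}\colon W_{F}\to\mathrm{Out}(C_{\hat{\mathbf{G}}}(\phi))$, trivial on $I_{F}$ and of finite order. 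As $\theta_{\phi}$ arises from genuine conjugation inside the product of general linear groups $\hat{\mathbf{G}}$, it permutes the factors $\GL_{n_{k}}$ of equal rank and involves no graph automorphism; hence there is a quasi-split group $\mathbf{H}_{\phi}$ of $\GL$-type --- concretely $\prod_{O}\mathrm{Res}_{E_{O}/F}\GL_{n_{O}}$, where $O$ runs over the orbits of factors and $E_{O}/F$ is the corresponding unramified extension --- with $\hat{\mathbf{H}}_{\phi}=C_{\hat{\mathbf{G}}}(\phi)$ and Galois action $\theta_{\phi}$. Because $\varphi=\xi\circ\varphi'$, the isomorphism $\xi\colon C_{\hat{\mathbf{G}}'}(\phi')\simto C_{\hat{\mathbf{G}}}(\phi)$ automatically intertwines $\theta_{\phi'}$ with $\theta_{\phi}$, so it induces an isomorphism $\mathbf{H}_{\phi'}\simto\mathbf{H}_{\phi}$. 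It therefore suffices to prove: \emph{(a)} for every $\GL$-type $\mathbf{G}$ and admissible inertial parameter $\phi$, an equivalence $\Rep_{\phi}(G)\simto\Rep_{1}(\mathbf{H}_{\phi}(F))$ onto the principal (``unipotent'') block; and \emph{(b)} that this equivalence interpolates Langlands transfer along the canonical embedding $\xi_{\phi}\colon{^{L}\mathbf{H}_{\phi}}\to{^{L}\mathbf{G}}$ characterized, up to conjugacy, by extending $C_{\hat{\mathbf{G}}}(\phi)\injo\hat{\mathbf{G}}$ and sending $1\rtimes w$ into $\varphi(w)\,C_{\hat{\mathbf{G}}}(\phi)$. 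Granting these, the composite $\Rep_{\phi'}(G')\simto\Rep_{1}(\mathbf{H}_{\phi'}(F))=\Rep_{1}(\mathbf{H}_{\phi}(F))\simto\Rep_{\phi}(G)$ interpolates $\xi_{*}$, since $\xi\circ\xi_{\phi'}$ also satisfies the characterization of $\xi_{\phi}$ (with $C_{\hat{\mathbf{G}}}(\phi)$ identified via $\xi$), so the two maps agree up to $\hat{\mathbf{G}}$-conjugacy on the parameters that occur.

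For \emph{(a)}, write $\mathbf{G}=\prod_{i}\mathrm{Res}_{F_{i}/F}\GL_{m_{i}}$; using $\Rep_{\oQl}(G)=\bigotimes_{i}\Rep_{\oQl}(\GL_{m_{i}}(F_{i}))$, Shapiro's lemma for inertial parameters, and the compatibility of centralizers with restriction of scalars, one reduces to the case $\mathbf{G}=\GL_{m}$ over a finite extension $E/F$. Let $[\prod_{i}\GL_{m_{i}}^{e_{i}},\boxtimes_{i}\rho_{i}^{\boxtimes e_{i}}]$ be the inertial support of the block $\Rep_{\phi}(\GL_{m}(E))$, the $\rho_{i}$ being pairwise non-inertially-equivalent supercuspidals. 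By Bushnell--Kutzko \cite{BK}, \cite{BKsst}, $\Rep_{\phi}(\GL_{m}(E))$ is equivalent to the category of modules over $\bigotimes_{i}\HC(e_{i},q_{E}^{f(\rho_{i})})$, where $f(\rho_{i})$ is the torsion number of $\rho_{i}$, i.e.\ the number of unramified characters $\chi$ with $\rho_{i}\chi\simeq\rho_{i}$; on the other hand, by Borel's theorem the principal block of $\mathbf{H}_{\phi}(E)=\prod_{O}\GL_{n_{O}}(E_{O})$ is the category of modules over $\bigotimes_{O}\HC(n_{O},q_{E}^{[E_{O}:E]})$. Thus \emph{(a)} reduces to matching the multisets $\{(e_{i},f(\rho_{i}))\}_{i}$ and $\{(n_{O},[E_{O}:E])\}_{O}$. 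By Clifford theory for the pro-cyclic quotient $W_{E}/I_{E}$, the restriction $\varphi_{\rho_{i}}|_{I_{E}}$ is multiplicity-free and its constituents form a single Frobenius-orbit, of length $t_{i}$, say; the corresponding isotypic blocks of $\phi$ contribute one orbit $O_{i}$ of $\GL_{e_{i}}$-factors of $C_{\GL_{m}(\oQl)}(\phi|_{I_{E}})$ with $[E_{O_{i}}:E]=t_{i}$; and $f(\rho_{i})=t_{i}$ because, writing $\varphi_{\rho_{i}}=\mathrm{Ind}_{W_{E_{1}}}^{W_{E}}\sigma$ with $E_{1}/E$ unramified of degree $t_{i}$ and $\sigma|_{I_{E_{1}}}$ irreducible, an unramified self-twist of $\varphi_{\rho_{i}}$ forces a self-twist of $\sigma$ (the distinct Frobenius conjugates of $\sigma$ cannot be involved), which must restrict to a scalar on $I_{E_{1}}$ and hence be trivial --- so precisely the $t_{i}$ unramified characters trivial on $W_{E_{1}}$ fix $\varphi_{\rho_{i}}$.

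For \emph{(b)}, the Bushnell--Kutzko and Borel equivalences are compatible with the local Langlands correspondence (after the reductions above, this comes down to the Iwahori case), so the equivalence of \emph{(a)} sends an irreducible $\pi\in\Rep_{\phi}(G)$ to the one in the principal block of $\mathbf{H}_{\phi}(F)$ whose parameter $\psi$ satisfies $\xi_{\phi}\circ\psi\sim\varphi_{\pi}$ --- which is exactly the asserted interpolation. The main obstacle is \emph{(a)}: it rests on the full Bushnell--Kutzko determination of the Hecke algebra of a block of $\GL_{m}$ (in particular that the parameter is $q_{E}^{f(\rho)}$ with $f(\rho)$ the torsion number), and on carrying the restriction-of-scalars and Shapiro reductions --- together with the Langlands compatibility needed for \emph{(b)} --- through so as to match that data with the purely group-theoretic pair $(C_{\hat{\mathbf{G}}}(\phi),\theta_{\phi})$.
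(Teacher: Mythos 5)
Your route is essentially the paper's own: pass to the unipotent model $\mathbf{H}_{\phi}$ with $\hat{\mathbf{H}}_{\phi}=C_{\hat{\mathbf{G}}}(\phi)$ and the unramified outer action, reduce to $\GL_{m}$ by Shapiro/restriction of scalars, and compare both blocks with modules over the same Iwahori--Hecke algebras via Bushnell--Kutzko types on one side and Borel's theorem on the other, the numerical match being ``torsion number $=$ length of the Frobenius orbit of an inertial constituent''. The genuine gap is your step \emph{(b)}. The assertion that ``the Bushnell--Kutzko and Borel equivalences are compatible with the local Langlands correspondence'' is not a citable fact, and it does not ``come down to the Iwahori case'': the block cut out by a simple type for a ramified supercuspidal $\sigma$ is not an Iwahori block, the BK equivalence is canonical only up to unramified twist (\cite[Prop.\ 7.5.10]{BK}), and the transfer along $\xi_{\phi}$ involves the twist $\tilde\sigma\otimes(-)$ on the Galois side. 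Without further argument your equivalence from \emph{(a)} interpolates $\xi_{*}$ only up to an unknown unramified twist, and even after normalizing one must prove it agrees with $\xi_{*}$ on \emph{all} irreducibles, not just on the one used to normalize. The paper closes exactly this gap by computing the transfer explicitly on parameters, $\rho\mapsto {\rm Ind}_{W_{F_{f}}'}^{W_{F}'}(\tilde\sigma\otimes\rho)$, i.e.\ $Z(m)\mapsto Z(m_{\pi})$ in Zelevinski's notation (using Henniart's characterization of the correspondence), normalizing the BK equivalence so that the sign character goes to the generalized Steinberg ${\rm St}_{e}(\pi)$, and then invoking BK's compatibility with normalized parabolic induction \cite[Thm.\ 7.6.1]{BK} and with unramified twisting \cite[Prop.\ 7.5.12]{BK}, together with Rogawski's description of simple $\HC(q,n)$-modules, to see that the normalized equivalence sends $M(m)$ to $Z(m_{\pi})$. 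Some argument of this kind is indispensable for the ``interpolates the transfer'' clause of the theorem.

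Two smaller points. First, your Shapiro reduction silently changes the base field of the unipotent model: for $\mathbf{G}={\rm Res}_{E|F}\GL_{m}$ with $E/F$ ramified, the group $\mathbf{H}_{\phi}$ over $F$ is ${\rm Res}_{F''|F}$ of a descent to $F''=E\cap F^{\rm nr}$, \emph{not} ${\rm Res}_{E|F}$ of the model attached to the Shapiro mate $\phi_{E}$ (the centralizer is induced along $W_{E}/I_{E}\subset W_{F}/I_{F}$, of index the residue degree). So your argument over $E$ produces $\Rep_{\phi}(G)\simeq \Rep_{1}(\mathbf{H}_{\phi_{E}}(E))$, and you still owe the totally ramified base change equivalence with $\Rep_{1}(\mathbf{H}_{\phi}(F))$; this is harmless for existence (the Hecke parameters coincide since $q_{E}=q_{F''}$), but its compatibility with transfer again requires the Zelevinski-type bookkeeping --- this is precisely why the paper isolates the base change case in item iii) of Lemma \ref{equivstatements}. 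Second, your $\xi_{\phi}$ is not ``characterized up to conjugacy'': the strict factorizations of $\phi$ form a torsor under $H^{1}(W_{F}/I_{F},Z(C_{\hat{\mathbf{G}}}(\phi)))$, and changing the choice twists the transfer by an unramified character; moreover $\xi\circ\xi_{\phi'}$ need not preserve the chosen \'epinglage of $C_{\hat{\mathbf{G}}}(\phi)$, so it agrees with a strict factorization only after composing with a $W_{F}$-equivariant outer automorphism of $\hat{\mathbf{G}}_{\phi}$, and the induced auto-equivalence of $\Rep_{1}(G_{\phi})$ must itself be checked to be compatible with transfer (the paper does this via \cite[Prop.\ 5.2.5]{Hainesstable}). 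Your argument survives because you fix $\varphi=\xi\circ\varphi'$ consistently, but these verifications belong in the proof.
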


\begin{rak}
  We expect that such an equivalence will be unique, up to natural
  transformation. In fact, the equivalences that we will exhibit are also compatible with parabolic
  induction,  and this extra compatibility makes them unique.
\end{rak}

\ali Let us give some examples that may shed light on the statement.
\label{examples}

i) Suppose that $\xi$ is a dual Levi embedding. This means that
$\mathbf{G'}$ may be embedded as a $F$-Levi subgroup of $\mathbf{G}$. Fix
a parabolic subgroup $P$ of $G$ with Levi component $G'$, and let
$[M',\pi']_{G'}$ be the inertial equivalence class of supercuspidal
pairs associated with  $\phi'$. Then the condition in the theorem is
equivalent to the requirement that the stabilizer of $[M',\pi']_{G'}$
in $G$ is $G'$. In this situation, it is well-known that the normalized parabolic
induction functor $i_{P}$ provides an equivalence of categories as in the
theorem.

ii) Suppose that $\xi$ is a base change homomorphism $\GL_{n}\times
W_{F }\To{} {^{L}({\rm Res}_{F'|F}}\GL_{n})$, and let $\phi'$ and
$\phi$ both be trivial. Then the condition in the theorem is met if and only
if $F'$ is totally ramified over $F$.
On the other hand, $\Rep_{\phi'}(G')$ is the principal block of
$\GL_{n}(F)$ while $\Rep_{\phi}(G)$ is the principal block of
$\GL_{n}(F')$. So Borel's theorem tells us they are respectively
 equivalent to the category of right modules over $\HC(n,q_{F})$, resp. $\HC(n,q_{F'})$. Therefore
they are equivalent if and only if $F'$ is totally ramified over
$F$. Moreover, it is not hard to find an equivalence that meets
the requirement of the theorem.

iii) Suppose that $\xi$ is an isomorphism of $L$-groups. Then the conditions of the theorem are met
for all $\phi'$ ! To describe the equivalence, first conjugate  under
$\hat\mathbf{G}$ to put  $\xi$ in the form $\hat\xi\times\psi$ where
$\hat\xi:\hat\mathbf{G}'\To{}\hat\mathbf{G}$ is an \'epinglage preserving $W_{F}$-equivariant
isomorphism, and $\psi : W_{F}\To{} Z(\hat \mathbf{G})\rtimes W_{F}$. Then $\hat\xi$ provides an $F$-rational
isomorphism $\hat\xi^{\vee}:\,\mathbf{G}\simto \mathbf{G'}$ (well-defined up to conjugacy), and $\psi$ determines a
character $\psi^{\vee} : G\To{} \mathbf{G}_{\rm
  ab}(F)\To{}\oQl^{\times}$ through local class field theory. The desired equivalence is
given by pre-composition under $\xi^{\vee}$ followed by  twisting under $\psi^{\vee}$. Its
compatibility with the Langlands transfer is Proposition 5.2.5 of \cite{Hainesstable}.

iv) Suppose that $\mathbf{G}=\GL_{n}$ and let $\phi$ be an inertial
parameter of $\mathbf{G}$ that is irreducible as a representation of $I_{F}$. Put
$\mathbf{G'}=\GL_{1}$ and $\phi'=1$ (trivial parameter).
Finally let $\xi$ be the product of  the central embedding
$\GL_{1}\injo\GL_{n}$ and  any extension $\varphi:\,
W_{F}\To{}\GL_{n}$ of $\phi$. Then the conditions of the Theorem are
met, $\Rep_{\phi}(G)$ is a cuspidal block and $\Rep_{\phi'}(G)$ is
$\Rep_{\oQl}(F^{\times}/\OC_{F}^{\times})$. The claimed equivalence is
given by $\chi\mapsto \pi\otimes (\chi\circ\det)$ where $\pi$
corresponds to $\varphi$ through Langlands' correspondence.

\alin{Reduction to unipotent blocks} \label{reduc_unip}
What makes Theorem \ref{transferql} a very flexible statement is that
no a priori restriction was made on $\xi$ ; namely its Galois
component can be very complicated. In this regard, it is important to
work with the Weil form of the $L$-group.
For instance, in the last example
above,  $\phi'$ was trivial but 
all the complexity of the setting was ``moved'' to the $L$-homomorphism $\xi$.
This is a simple example of a more general phenomenon that allows
to reduce Theorem \ref{transferql} to an equivalent statement which deals with a single
parameter $\phi\in\Phi_{\rm  inert}(\mathbf{G},\oQl)$. We now explain
this, and refer to section  \ref{sec:unip-fact} for details.

By definition we may choose an extension $\varphi$ of $\phi$ to an $L$-parameter $W_{F}\To{}
{^{L}\mathbf{G}(\oQl)}$. Conjugation by $\varphi(w)$ in
$^{L}\mathbf{G}$ then induces an action of
$W_{F}/I_{F}$ on $C_{\hat \mathbf{G}}(\phi)$ and a factorization :
$$\xymatrix{
\phi:\,\, I_{F} \ar[rrr]^-{i\mapsto (1,i)} &&&
 C_{\hat\mathbf{G}}(\phi)\rtimes W_{F} \ar[rrr]^-{(z,w)\mapsto z\varphi(w)} &&&
{^{L}\mathbf{G}(\oQl)} 
}.$$
It turns out that the outer action  $W_{F} \To{} {\rm Out}(C_{\hat\mathbf{G}}(\phi))$ does not depend on
the choice of $\varphi$ and thus defines a ``unique'' unramified group $\mathbf{G}_{\phi}$ over
$F$. Moreover, one checks that this group is of $\GL$-type, and
 that there exists $\varphi$ such that all $\varphi(w)$ preserve a
given \'epinglage of $C_{\hat\mathbf{G}}(\phi)$. So we get a factorization
of the form
$\phi: I_{F}\To{1\times \id} {^{L}\mathbf{G}_{\phi}}\To{\xi_{\varphi}}
{^{L}\mathbf{G}}$, as considered in 
Theorem \ref{transferql}. By construction, the hypothesis of the
latter theorem is satisfied, so we get  the following
\begin{cor}
  There is an equivalence of categories $\Rep_{1}({G}_{\phi})\simto\Rep_{\phi}(G)$ that
  extends the transfer associated to the above
  $\xi_{\varphi}:\,{^{L}\mathbf{G}}_{\phi}\injo {^{L}\mathbf{G}}$.  
\end{cor}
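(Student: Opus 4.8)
The plan is to read the corollary off Theorem \ref{transferql}, applied to the datum $(\mathbf{G}',\xi,\phi')=(\mathbf{G}_{\phi},\xi_{\varphi},1)$ supplied by the unipotent factorization of \ref{reduc_unip}. First I would recall what that construction delivers: an unramified group $\mathbf{G}_{\phi}$ over $F$, which is of $\GL$-type, whose dual group $\hat\mathbf{G}_{\phi}$ is, together with its $W_{F}$-action, identified with $C_{\hat\mathbf{G}}(\phi)$ equipped with the action coming from a choice of $\varphi$ that preserves an \'epinglage of $C_{\hat\mathbf{G}}(\phi)$, together with a factorization $\phi:\, I_{F}\To{1\times \id}{^{L}\mathbf{G}_{\phi}}\To{\xi_{\varphi}}{^{L}\mathbf{G}}$ into morphisms of $L$-groups. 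Here $\phi':=1\times \id$ is by definition the trivial inertial parameter of $\mathbf{G}_{\phi}$, and $\xi_{\varphi}\circ\phi'=\phi$, so we are placed exactly in the framework of Theorem \ref{transferql} with $\mathbf{G}'=\mathbf{G}_{\phi}$.

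The one thing to verify is the hypothesis of that theorem, namely that $\xi_{\varphi}$ induces an isomorphism $C_{\hat\mathbf{G}_{\phi}}(\phi')\simto C_{\hat\mathbf{G}}(\phi)$. This is immediate: since $\phi'$ is trivial, its image is $\{1\}$ and hence $C_{\hat\mathbf{G}_{\phi}}(\phi')=\hat\mathbf{G}_{\phi}$; on the other hand the underlying homomorphism of $\xi_{\varphi}$ is, by the very definition of $\xi_{\varphi}$ in \ref{reduc_unip}, the tautological inclusion $\hat\mathbf{G}_{\phi}=C_{\hat\mathbf{G}}(\phi)\injo\hat\mathbf{G}$. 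So $\xi_{\varphi}$ maps $C_{\hat\mathbf{G}_{\phi}}(\phi')=\hat\mathbf{G}_{\phi}$ isomorphically onto $C_{\hat\mathbf{G}}(\phi)$. In passing one checks that $\xi_{\varphi}$ is a morphism of $L$-groups in the sense required in \ref{transferql}, i.e. that it sends semisimple elements to semisimple elements: it is the composition of the closed embedding $C_{\hat\mathbf{G}}(\phi)\rtimes W_{F}\injo\hat\mathbf{G}\rtimes W_{F}$ with conjugation by $\varphi$, and both of these preserve semisimplicity.

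Theorem \ref{transferql} then produces an equivalence of categories $\Rep_{1}(G_{\phi})=\Rep_{\phi'}(G_{\phi})\simto\Rep_{\phi}(G)$ interpolating the Langlands transfer $(\xi_{\varphi})_{*}$ on irreducible representations, which is precisely the statement of the corollary. In this light the corollary is a purely formal consequence of Theorem \ref{transferql}; the substantive content --- and the only real obstacle --- lies in the existence of the unipotent factorization itself, that is, in checking that the outer action $W_{F}\to{\rm Out}(C_{\hat\mathbf{G}}(\phi))$ is independent of $\varphi$ and cuts out an unramified $F$-group, that this group is of $\GL$-type, and that $\varphi$ can be chosen to preserve an \'epinglage of $C_{\hat\mathbf{G}}(\phi)$. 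This is exactly the work carried out in section \ref{sec:unip-fact}, and I would simply invoke it here.
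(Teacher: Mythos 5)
Your proposal is correct and follows essentially the same route as the paper, which states this corollary immediately after observing that, by construction of the strict unipotent factorization $\phi: I_{F}\To{1\times\id}{^{L}\mathbf{G}_{\phi}}\To{\xi_{\varphi}}{^{L}\mathbf{G}}$, the hypothesis of Theorem \ref{transferql} is satisfied (your verification that $C_{\hat\mathbf{G}_{\phi}}(1)=\hat\mathbf{G}_{\phi}$ maps isomorphically onto $C_{\hat\mathbf{G}}(\phi)$ is exactly that observation). Two small caveats: the deduction is non-circular only because the paper proves Theorem \ref{transferql} via Lemma \ref{equivstatements}, i.e.\ by establishing directly (with Bushnell--Kutzko types and Borel's theorem) the $\GL_{n}$ case of this very corollary together with totally ramified base change, so the ``substantive content'' is not just the factorization; and the facts you invoke --- that $\mathbf{G}_{\phi}$ is of $\GL$-type and that $\varphi$ can be chosen \'epinglage-preserving --- are proved in \ref{computGL} and Proposition \ref{existunipfact}, not only in section \ref{sec:unip-fact}.
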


We like to see this statement as a ``moral explanation'' to, and a
more precise version of, 
the well known property that any $\oQl$-block of a
 $\GL_{n}$ is equivalent to the principal $\oQl$-block of a product of
linear groups over extension fields. Also we note that a different choice of extension $\varphi'$ of
$\phi$ as above will
differ from $\varphi$ by a central unramified cocycle $W_{F}\To{} Z(\hat\mathbf{G})$, and an associated
equivalence $\xi_{\varphi'}$ is thus deduced from $\xi_{\varphi}$ by twisting by the associated
unramified character of $G$.

\begin{rak}
  Again we may ask whether an equivalence as in the Corollary is actually unique. This
  boils down to the following concrete question. Let $\HC=\HC(n,q)$ be the extended affine Hecke
  algebra of type $A_{n}$ and parameter $q$. Suppose $\Phi$ is an auto-equivalence of
  categories of $\HC-\rm Mod$ that fixes all simple modules up to equivalence. Is
  $\Phi$ isomorphic to the identity functor ? Even more concretely : if $\IC$ is an
  invertible $\HC\otimes_{\oQl}\HC^{\rm opp}$-module such that $\IC\otimes_{\HC}S\simeq S$
  for any simple left $\HC$-module $S$, do we have $\IC\simeq \HC$ ? 
\end{rak}

\alin{On the proofs}
In fact, it is easy to show that Corollary \ref{reduc_unip} with $(\phi,\varphi)$
allowed to vary, is equivalent to Theorem
\ref{transferql} with $(\phi',\xi)$ allowed to vary, see Lemma \ref{equivstatements}. 
Now, to prove the corollary
directly,
 we may first reduce to the case
where $\mathbf{G}$ is quasi-simple, \emph{i.e.} of the form ${\rm Res}_{F'|F}\GL_{n}$, then
by a Shapiro-like argument to the case $\mathbf{G}=\GL_{n}$, then, using parabolic induction,
to the case where $\mathbf{G}_{\phi}$ is quasi-simple. In the latter case, $\Rep_{\phi}(\mathbf{G})$
is cut out by a simple type of \cite{BK} and the desired equivalence follows from the
computation of Hecke 
algebras there. Note that the information needed on the simple type is very coarse ; only the
degree and residual degree of the field entering the definition of the type is involved here. Details
are given in \ref{sec:groups-gl-type}.

\alin{Variants} \label{variant}
In the foregoing discussion, we may try to replace admissible parameters with domain
$I_{F}$ by admissible parameters with domain any closed normal subgroup $K_{F}$ of $I_{F}$. The main
examples we have in mind are $K_{F}=P_{F}$, the wild inertia subgroup, 
and $K_{F}=I_{F}^{(\ell)}:={\rm ker}(I_{F}\To{}\Zl(1))$, which is the maximal closed subgroup of
$I_{F}$ with prime-to-$\ell$ pro-order. Other possibilities are ramication subgroups of $P_{F}$.
In any case, an \emph{admissible $K_{F}$-parameter of $\mathbf{G}$} will be a morphism
$K_{F}\To{}{^{L}\mathbf{G}}$ that admits an extension to a usual admissible parameter $W_{F}\To{}{^{L}\mathbf{G}}$.
By grouping Bernstein blocks, we thus get a product decomposition 
$\Rep_{\oQl}(G)=\prod_{\phi} \Rep_{\phi}(G)$ where $\phi$ runs over admissible $K_{F}$-parameters up
to $\hat \mathbf{G}$ conjugacy, and $\Rep_{\phi}(G)$ is ``generated'' by the irreducible representations $\pi$
such that $(\varphi_{\pi})_{|K_{F}}\sim \phi$. For example, in the case $K_{F}=P_{F}$, the factor
$\Rep_{1}(G)$ is the level $0$ subcategory.

It is then easy to deduce from Theorem \ref{transferql} exactly the same statement
 for $K_{F}$-parameters, simply by grouping the equivalences provided by this theorem.
In contrast, our arguments in this paper, in particular in paragraph \ref{computGL},
 allow us to prove 
the natural analogue of Corollary \ref{reduc_unip} only when $K_{F}$ contains $P_{F}$.

\subsection{Functoriality for $\oZl$-blocks of groups of $\GL$-type}

For $\mathbf{G}$ of $\GL$-type, Vign\'eras \cite{VigInduced} has obtained a decomposition of $\Rep_{\oFl}(G)$ formally
analogous to that of Bernstein,  where the blocks are indexed by inertial classes of supercuspidal
pairs over $\oFl$.
This was further lifted to a decomposition of $\Rep_{\oZl}(G)$ by
Helm in \cite{HelmBernstein}.
In general, Vign\'eras or Helm blocks will \emph{not} be equivalent to categories of modules over an
Hecke--Iwahori algebra, and actually even the structure of the principal block of
$\Rep_{\oZl}(G)$ (which may contain non-Iwahori-spherical representations) is not yet well understood. 

\alin{Langlands parametrization of $\oZl$-blocks} \label{Lanparzl}
In exactly the same way as for $\oQl$-blocks (see \ref{lemmeblockZl} for some details), Vign\'eras' Langlands correspondence for
$\oFl$-representations \cite{VigLanglands} 
allows one to rewrite the Vign\'eras--Helm decomposition in the form 
$$ \Rep_{\oZl}(\GL_{n}(F)) =\prod_{\bar\phi\in \Phi_{\rm inert}(\GL_{n},\oFl)} \Rep_{\bar\phi}(\GL_{n}(F))$$
with 
$\Phi_{\rm inert}(\GL_{n},\oFl)$ 
the set of equivalence classes of semi-simple $n$-dimensional $\oFl$-representations of
$I_{F}$ that extend to $W_{F}$.
This suggests to use $L$-groups over $\oFl$ in order to mimic the transfer of $\oQl$-blocks. 
 \emph{However,} in order to get a functoriality property analogous to Theorem \ref{transferql}, we
need to stick to the usual $L$-groups over $\oQl$.

Recall that we have a ``semisimplified reduction mod $\ell$'' map 
$r_{\ell}:\, \Phi_{\rm inert}(\GL_{n},\oQl)\To{} \Phi_{\rm inert}(\GL_{n},\oFl)$.  The
basic properties of the Vign\'eras--Helm decomposition tell us 
that, denoting by $e_{\bar\phi}$ the primitive idempotent in the center $\ZG_{\oZl}(G)$ of
$\Rep_{\oZl}(G)$  that
cuts out the block $\Rep_{\bar\phi}(G)$,  we have the equality
$$ e_{\bar\phi}=\sum_{r_{\ell}(\phi)=\bar\phi} e_{\phi},
\;\;\; \hbox{ in } \;\;\;
\ZG_{\oQl}(G).$$

Now recall the ``prime-to-$\ell$ inertia subgroup'' $I_{F}^{(\ell)}$ of \ref{variant}, and
define $ \Phi_{\ell'\rm -inert}(\GL_{n},R)$ as the set of semi-simple $n$-dimensional
$R$-representations of $I_{F}^{(\ell)}$ that extend to $W_{F}$ (here, $R$ is either $\oFl$ or $\oQl$).
We have a  commutative diagram
$$\xymatrix{
\Phi_{\rm inert}(\GL_{n},\oQl) \ar[r]^-{\rm res} \ar[d]_{r_{\ell}} &
\Phi_{\ell'\rm -inert}(\GL_{n},\oQl) \ar[d]^{r_{\ell}} \\
\Phi_{\rm inert}(\GL_{n},\oFl) \ar[r]_-{\rm res} &
\Phi_{\ell'\rm -inert}(\GL_{n},\oFl)
}.$$
The reduction map $r_{\ell}$ on the right hand side is a bijection because
  $I_{F}^{(\ell)}$ has prime-to-$\ell$ order. Moreover, 
the restriction map on the bottom is also a bijection because
 a semisimple $\oFl$-representation of $I_{F}$ is determined by its
restriction to $I_{F}^{(\ell)}$ (indeed, it is determined by its Brauer character on
$\ell'$-elements, but the set of $\ell'$ elements of $I_{F}$ is precisely $I_{F}^{(\ell)}$).


This shows that we may parametrize the Vign\'eras-Helm blocks by the set 
$\Phi_{\ell'\rm -inert}(\mathbf{G},\oQl)$, with the restriction map from $I_{F}$ to $I_{F}^{(\ell)}$
playing the role of the reduction map $r_{\ell}$. Using the non-commutative Shapiro lemma
(see Corollary \ref{coroShapiro}),
we get for any group $\mathbf{G}$ of $\GL$-type a parametrization of blocks of
$\Rep_{\oZl}(G)$ by the set 
$$ \Phi_{\ell'\rm -inert}(\mathbf{G},\oQl):=\left\{\phi_{\ell}:\,I_{F}^{(\ell)}\To{} {^{L}\mathbf{G}(\oQl)},\,
\exists \varphi\in \Phi(\mathbf{G},\oQl), \varphi_{|I_{F}^{(\ell)}}=\phi_{\ell}\right\}_{/
\hat\mathbf{G}-{\rm conj}}.$$

  

\alin{Functorial transfer of  $\oZl$-blocks} \label{transferzl}
Now consider again  an $L$-homomorphism $\xi:{^{L}\mathbf{G'}}\To{}{^{L}\mathbf{G}}$ of groups of
$\GL$-type,  fix an admissible parameter $\phi': I_{F}^{(\ell)}\To{}
{^{L}\mathbf{G'}}$ and put $\phi:=\xi\circ\phi'$. Attached to $\phi$ is a $\oZl$-block $\Rep_{\phi,\oZl}(G)$, whose $\oQl$-objects
form a finite sum of
$\oQl$-blocks $\Rep_{\phi,\oQl}(G)=\prod_{\psi_{|I_{F}^{(\ell)}}\sim \phi} \Rep_{\psi}(G)$. 

\begin{con}
  Suppose that $\xi$ induces an isomorphism $C_{\hat \mathbf{G}'}(\phi')\simto C_{\hat
    \mathbf{G}}(\phi)$, and also that the projection of $\xi(W_{F})$ to $\hat
  G(\oQl)$ is bounded. 
Then there is an 
equivalence of categories $\Rep_{\phi',\oZl}(G')\simto \Rep_{\phi,\oZl}(G)$ that interpolates the
Langlands transfer $\xi_{*}$ on irreducible $\oQl$-representations.
\end{con}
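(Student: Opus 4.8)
The plan is to prove the conjecture by running the proof of Theorem~\ref{transferql} with $\oZl$-coefficients, substituting for the Bushnell--Kutzko computation of Hecke algebras over $\oQl$ its known integral refinements, and then descending to $\oQl$ to identify the transfer. \emph{First reduction.} By the formal manipulation of Lemma~\ref{equivstatements} (which is insensitive to the coefficient ring), the conjecture with $(\phi',\xi)$ allowed to vary is equivalent to the case of a single parameter: for every $\phi\in\Phi_{\ell'\rm -inert}(\mathbf{G},\oQl)$ one must build an equivalence $\Rep_{1,\oZl}(\mathbf{G}_{\phi}(F))\simto\Rep_{\phi,\oZl}(G)$, where $\Rep_{1,\oZl}$ is the $\oZl$-block of the trivial $I_{F}^{(\ell)}$-parameter, interpolating the transfer along the factorization $\xi_{\varphi}:{}^{L}\mathbf{G}_{\phi}\injo{}^{L}\mathbf{G}$ of \ref{reduc_unip}. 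One has to check that this unipotent factorization is available for $K_{F}=I_{F}^{(\ell)}$; it is, because $\ell\neq p$ forces $P_{F}\subseteq I_{F}^{(\ell)}$, so that $I_{F}^{(\ell)}$ contains $P_{F}$ and the restriction recorded at the end of \ref{variant} does not apply. Moreover $\xi_{\varphi}$ still satisfies the two hypotheses: the centralizer isomorphism holds by construction of $\mathbf{G}_{\phi}$, and the Weil component of $\xi_{\varphi}$ can be chosen to preserve an \'epinglage of $C_{\hat\mathbf{G}}(\phi)$, hence to have finite — in particular bounded — image in $\hat\mathbf{G}(\oQl)$.

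\emph{Reduction to a quasi-simple simple type.} One then argues as in \ref{sec:groups-gl-type}: reduce to $\mathbf{G}$ quasi-simple, then by the non-commutative Shapiro lemma (Corollary~\ref{coroShapiro}), which is valid over any coefficient ring, to $\mathbf{G}=\GL_{n}$, and finally, using normalized parabolic induction, to the case in which $\mathbf{G}_{\phi}$ is itself quasi-simple, say $\mathbf{G}_{\phi}={\rm Res}_{E|F}\GL_{m}$. The parabolic-induction step needs the integral analogue of Example~\ref{examples}\,(i): if $\mathbf{G}'$ is an $F$-Levi of $\mathbf{G}$ with parabolic $P$, and the stabilizer in $G$ of the supercuspidal inertial class attached to $\phi'$ is $G'$ itself, then $i_{P}$ restricts to an equivalence $\Rep_{\phi',\oZl}(G')\simto\Rep_{\phi,\oZl}(G)$; this follows from Helm's description of the integral Bernstein decomposition \cite{HelmBernstein} together with the $\oZl$-theory of types of S\'echerre and Stevens, and it manifestly interpolates $i_{P}$ on $\oQl$-representations.

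\emph{The quasi-simple case.} When $\mathbf{G}_{\phi}={\rm Res}_{E|F}\GL_{m}$, the block $\Rep_{\phi,\oZl}(\GL_{n}(F))$ is cut out by a maximal simple type of \cite{BK}, whose integral version is due to Vign\'eras \cite{VigInduced} and to S\'echerre and Stevens. By their computation, the endomorphism ring of the associated progenerator is an explicit $\oZl$-algebra depending only on $m$ and on the residual degree $f$ of the field occurring in the type — the very data that govern the principal $\oZl$-block of $\GL_{m}(E)$, as in \ref{computGL}. The hypothesis $C_{\hat\mathbf{G}'}(\phi')\simto C_{\hat\mathbf{G}}(\phi)$ forces these invariants to agree on the two sides, yielding an isomorphism of $\oZl$-Hecke algebras and hence an equivalence of $\oZl$-blocks. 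The boundedness of the projection of $\xi(W_{F})$ to $\hat\mathbf{G}(\oQl)$ enters precisely here: it guarantees that $\xi$ carries bounded $L$-parameters to bounded ones, hence is compatible with the semisimplified reduction mod $\ell$ of \ref{transferzl}, so that the whole $\oZl$-block is transferred and not merely the product of the $\oQl$-blocks $\Rep_{\psi}(G)$ with $\psi_{|I_{F}^{(\ell)}}\sim\phi$.

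\emph{Matching the transfer, and the main obstacle.} Restricting the equivalence of $\oZl$-blocks just obtained to $\oQl$-coefficients gives an equivalence between the corresponding finite products of $\oQl$-blocks; by Theorem~\ref{transferql}, applied block by block, there is such an equivalence that is in addition compatible with parabolic induction and therefore unique, and one pins the normalization of the simple types (as in \cite{BKsst}) so that the $\oQl$-part of the $\oZl$-equivalence coincides with it, which delivers the required interpolation of $\xi_{*}$. The crux, and the reason the statement is only conjectural, is the quasi-simple case: one needs the integral theory of types for $\GL_{n}$-blocks to be understood precisely enough that, first, the $\oZl$-endomorphism algebra is identified together with its reduction mod $\ell$ — the principal $\oZl$-block is not the module category of an Iwahori--Hecke algebra, and the ``banal'' and ``non-banal'' cases (according to whether $\ell\nmid q^{f}-1$ or $\ell\mid q^{f}-1$) must be handled separately — and, second, that this identification is compatible with the Langlands parametrization of blocks, which in turn requires knowing that Vign\'eras' mod-$\ell$ Langlands correspondence interacts with reduction of $\oQl$-parameters in the manner predicted by the types. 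These last compatibilities constitute the genuinely open input.
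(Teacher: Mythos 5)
The statement you are proving is stated in the paper as a \emph{conjecture}: the paper offers no proof of it, and the strongest thing it establishes is the tame case (Theorem \ref{transferzltame}), and even that only in the weak form $\EC_F(\phi',\xi)^{-}$, imported from \cite{Datequiv} where the equivalences are built not from Hecke-algebra computations but from coefficient systems on buildings and Deligne--Lusztig theory. Your proposal is a strategy outline that transposes the $\oQl$-proof (Lemma \ref{equivstatements}, Shapiro, parabolic reduction, then the quasi-simple case via simple types), and the reduction steps are reasonable — the use of Lemma \ref{equivstatements} and of \cite[Thm.~12.3]{HelmBernstein} for the Levi step matches what the paper itself envisages. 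But the decisive step, your ``quasi-simple case'', is exactly the open part and your argument there does not go through as written: over $\oZl$ the compactly induced maximal simple type is \emph{not} a progenerator of the block, and the block is strictly larger than the module category of the integral Iwahori--Hecke algebra (the paper stresses that even the principal $\oZl$-block contains non-Iwahori-spherical objects and that its structure is not understood). So there is no known identification of $\Rep_{\phi,\oZl}(\GL_n(F))$ with the principal $\oZl$-block of $\GL_m(E)$ depending only on $(m,q^f)$; producing such an identification is the content of the conjecture, not an available input. You concede this at the end, which means the text is a research plan rather than a proof.

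Two further points. First, your ``matching the transfer'' step has an independent gap: given some $\oZl$-equivalence, you cannot in general ``pin its normalization'' so that its $\oQl$-fiber agrees with the equivalence of Theorem \ref{transferql} — the only adjustments available integrally are much more limited than over $\oQl$, and indeed even in the proved tame case the paper explicitly says the compatibility with the $\oQl$-transfer is missing. Second, the role you assign to the boundedness hypothesis is off: it is not about which $\oQl$-blocks get grouped (that matching is automatic from $\xi\circ\phi'=\phi$ on $I_F^{(\ell)}$), but about integrality of the transfer itself — e.g.\ an $L$-group automorphism twisting by an unbounded unramified character gives an equivalence of $\oQl$-blocks interpolating $\xi_*$ but destroys $\oZl$-structures, which is precisely what the boundedness condition is meant to exclude.
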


Again we may also conjecture that there is a unique such equivalence of categories, or at
least that any equivalence $\Rep_{\phi',\oQl}(G')\simto\Rep_{\phi,\oQl}(G)$
   as in Theorem \ref{transferql} extends to an equivalence
 $\Rep_{\phi',\oZl}(G')\simto\Rep_{\phi,\oZl}(G)$.

\begin{rak}
Let us take up the base change  example  \ref{examples} ii). With the notation there, the
requirements of the conjecture are met if and only if $F'$ is a totally ramified extension
of \emph{degree prime to $\ell$.} The conjecture then predicts an
equivalence between the principal $\oZl$-blocks of $\GL_{n}(F)$ and $\GL_{n}(F')$. In fact, it is
plausible that such an equivalence exists when $F'$ is \emph{only} assumed to be totally
ramified, but in general it won't interpolate the base change of irreducible
$\oQl$-representations. As an example, put $n=2$, $F=\QM_{p}$,
$\ell|(p+1)$ odd, and
$F'=\QM_{p}(p^{1/\ell})$. In this situation there exists a supercuspidal
$\oQl$-representation $\pi$ in the principal $\oZl$-block of
$\GL_{n}(F)$ whose base change is a principal series of $\GL_{n}(F')$.
Indeed, take for $\pi$ the representation that corresponds to the
irreducible Weil group representation $\sigma:={\rm
  ind}_{W_{\QM_{p^{2}}}}^{W_{\QM_{p}}}(\chi)$ where $\chi$ is any
character of $W_{\QM_{p^{2}}}\To{}\oZl^{\times}$ that extends a character
$I_{\QM_{p}}\twoheadrightarrow \mu_{\ell}\injo\oQl^{\times}$.
\end{rak}


\alin{Reduction to unipotent $\oZl$-blocks} \label{reduc_unipzl} Start with an admissible parameter $\phi:\,
I_{F}^{(\ell)}\To{} {^{L}\mathbf{G}}$ and choose an extension to some usual parameter 
$\varphi:\,W_{F}\To{} {^{L}\mathbf{G}}$. The same procedure as in paragraph \ref{reduc_unip}
provides us with a factorization
 $\phi: I_{F}^{(\ell)}\To{1\times \id} {^{L}\mathbf{G}_{\phi}}\To{\xi_{\varphi}}
{^{L}\mathbf{G}}$ in which $\mathbf{G}_{\phi}$ is a group of $\GL$-type that
 splits over a tamely ramified  $\ell$-extension, that
only depends on $\phi$,
 and such that
$\hat\mathbf{G}_{\phi}=C_{\hat\mathbf{G}}(\phi)$. In particular the assumption of the
last conjecture is satisfied and thus the following conjecture is a consequence of the
latter :
\begin{con}
    There is an equivalence of categories $\Rep_{1,\oZl}({G}_{\phi})\simto\Rep_{\phi,\oZl}(G)$ that
  extends the transfer of irreducible $\oQl$-representations  associated to the above
  $\xi_{\varphi}:\,{^{L}\mathbf{G}}_{\phi}\injo {^{L}\mathbf{G}}$.  
\end{con}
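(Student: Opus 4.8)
The plan is to obtain this statement formally from Conjecture \ref{transferzl}, applied with $\mathbf{G}'=\mathbf{G}_{\phi}$, $\phi'=1$ the trivial $I_{F}^{(\ell)}$-parameter, and $\xi=\xi_{\varphi}$, so that $\xi_{\varphi}\circ\phi'=\phi$. As recorded in \ref{reduc_unipzl}, the two hypotheses of that conjecture hold for this datum: the centralizer condition $C_{\hat\mathbf{G}_{\phi}}(\phi')\simto C_{\hat\mathbf{G}}(\phi)$ is automatic because $C_{\hat\mathbf{G}_{\phi}}(1)=\hat\mathbf{G}_{\phi}=C_{\hat\mathbf{G}}(\phi)$ by construction and $\xi_{\varphi}$ restricts to precisely this identification on $\hat\mathbf{G}_{\phi}$; and the boundedness of the projection of $\xi_{\varphi}(W_{F})$ to $\hat\mathbf{G}(\oQl)$ holds because $\mathbf{G}_{\phi}$ splits over a tamely ramified $\ell$-extension and $\phi$ has finite image ($I_{F}^{(\ell)}$ is profinite, $\hat\mathbf{G}(\oQl)$ discrete), so that $\varphi$ may be chosen with bounded image. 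Conjecture \ref{transferzl} then delivers the equivalence, compatible with the transfer $(\xi_{\varphi})_{*}$ on $\oQl$-irreducibles; a different choice of extension alters $\varphi$ by an unramified cocycle, hence $\xi_{\varphi}$ by an unramified twist, which is absorbed by composing with the corresponding character twist.

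Since Conjecture \ref{transferzl} is itself only conjectural, a direct argument would instead imitate the proof of Corollary \ref{reduc_unip} in the $\oQl$-setting. First one reduces to $\mathbf{G}$ quasi-simple, \emph{i.e.} $\mathbf{G}={\rm Res}_{F'|F}\GL_{n}$, using that the Vign\'eras--Helm decomposition is compatible with splitting a group of $\GL$-type into its quasi-simple factors; then, by the non-commutative Shapiro lemma (Corollary \ref{coroShapiro}), available over $\oZl$, one reduces to $\mathbf{G}=\GL_{n}$ over $F'$ (which we rename $F$). Using parabolic induction together with the block theory and second-adjointness available over $\oZl$ (Helm, Vign\'eras), one further reduces to the case where $\mathbf{G}_{\phi}$ is quasi-simple, say $\mathbf{G}_{\phi}={\rm Res}_{E|F}\GL_{m}$ with $E|F$ tamely ramified of degree a power of $\ell$. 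In that case $\Rep_{\phi,\oZl}(\GL_{n}(F))$ is cut out by an integral form of a Bushnell--Kutzko simple type, and one is left to compute the corresponding Hecke $\oZl$-algebra and match it with the one attached to the principal $\oZl$-block $\Rep_{1,\oZl}(\GL_{m}(E))$, checking at the end compatibility with the Langlands transfer by tracking the correspondence through the type.

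The main obstacle is exactly this last step. Over $\oQl$ the Hecke algebra of a simple type is an extended affine Iwahori--Hecke algebra of type $A$ depending only on the residual cardinality $q$ and on the numerical invariants of the field entering the type, which forces the equivalence; over $\oZl$ this is no longer so, and indeed the paper recalls that the principal $\oZl$-block of $\GL_{m}(E)$ need not be Iwahori-spherical and is not yet well understood. One would therefore first need an \emph{integral} theory of simple types producing an equivalence $\Rep_{\phi,\oZl}(\GL_{n}(F))\simeq {\rm Mod}\mbox{-}\HC_{\oZl}$ with $\HC_{\oZl}$ depending only on those numerical data and on $q$ (and likewise for $\Rep_{1,\oZl}(\GL_{m}(E))$, in the spirit of S\'echerre--Stevens and Vign\'eras), together with a matching of the numerical data on the two sides --- this matching is where the condition $\hat\mathbf{G}_{\phi}=C_{\hat\mathbf{G}}(\phi)$ enters. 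I expect the clean identification of these integral Hecke algebras, independently of the wildly ramified part of the type, to be the crux of the argument.
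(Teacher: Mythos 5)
Your first paragraph is precisely the paper's own argument: the statement is asserted only as a formal consequence of Conjecture \ref{transferzl}, applied to the factorization $\xi_{\varphi}$ constructed in \ref{reduc_unipzl} with $\hat{\mathbf{G}}_{\phi}=C_{\hat{\mathbf{G}}}(\phi)$ (and, by Lemma \ref{equivstatements}, the two conjectures are in fact equivalent), so your deduction, including the explicit check of the boundedness hypothesis, matches the paper. Your remaining paragraphs, which you rightly flag as not constituting a proof, are consistent with the paper's own account of the unconditional evidence (the tame case of Theorem \ref{transferzltame} via \cite{Datequiv}, and the integral Hecke-algebra programme of Chinello), so no correction is needed.
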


As in the case of $\oQl$ coefficients, Lemma \ref{equivstatements} tells us 
that conjectures \ref{reduc_unipzl} and \ref{transferzl} are actually
equivalent. 


\alin{Tame parameters and level $0$ blocks} \label{tamecase}
An $L$-homomorphism $\xi:{^{L}\mathbf{G'}}\To{}{^{L}\mathbf{G}}$ is called
\emph{tame} if its restriction $\xi_{|P_{F}}$ to
the wild inertia subgroup $P_{F}$ of $W_{F}$ is trivial (which means it is
$\hat\mathbf{G}$-conjugate to the map $p\mapsto (1,p)$). This 
definition also applies to $L$-parameters, for which $\mathbf{G'}$ is the trivial group, as
well as to inertial and $\ell$-inertial parameters. Note that neither $\mathbf{G'}$ nor
$\mathbf{G}$ are required to be tamely ramified.

If $\phi\in \Phi_{\ell'-\rm inert}(\mathbf{G},\oQl)$ 
is a tame $\ell$-inertial parameter of $\mathbf{G}$, 
the corresponding block $\Rep_{\phi}(G)$ in $\Rep_{\oZl}(G)$
has \emph{level} (or \emph{depth}) $0$, and conversely any level $0$ block of $\Rep_{\oZl}(G)$
 corresponds to a tame $\ell$-inertial
parameter. 
The following result is our  best evidence in support of the above conjectures.

\begin{thm} \label{transferzltame}
  Let $\xi$ be as in Conjecture \ref{transferzl}, and suppose $\xi$ is tame. Then there is
  an equivalence of categories $\Rep_{\phi',\oZl}(G')\simto
  \Rep_{\phi,\oZl}(G)$. Equivalently, let $\phi$ be as in Conjecture \ref{reduc_unipzl} and suppose
  $\phi$ is tame. Then there is an equivalence of categories $\Rep_{1,\oZl}(G_{\phi})\simto \Rep_{\phi,\oZl}(G)$. 
\end{thm}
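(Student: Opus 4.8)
By Lemma \ref{equivstatements}, it suffices to establish the second formulation: given a tame $\ell$-inertial parameter $\phi:\,I_{F}^{(\ell)}\To{}{^{L}\mathbf{G}}$, produce an equivalence $\Rep_{1,\oZl}(G_{\phi})\simto\Rep_{\phi,\oZl}(G)$ extending the Langlands transfer on irreducible $\oQl$-representations. The plan is to follow the same chain of reductions sketched in the proof of Corollary \ref{reduc_unip}, keeping track of $\ell$-integrality throughout. First I would reduce to $\mathbf{G}$ quasi-simple, i.e. $\mathbf{G}=\mathrm{Res}_{F'|F}\GL_{n}$, using that both sides are products over the quasi-simple factors and the Vign\'eras--Helm decomposition is compatible with such products; then, by a Shapiro-type argument (Corollary \ref{coroShapiro}), further reduce to $\mathbf{G}=\GL_{n}$ over $F$ itself, noting that Shapiro identifies $\Rep_{\oZl}(\mathrm{Res}_{F'|F}\GL_{n}(F))$ with $\Rep_{\oZl}(\GL_{n}(F'))$ and is visibly compatible with the block decompositions and with transfer of irreducibles. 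At this point $\phi$ is a tame semisimple $\oQl$-representation of $I_{F}^{(\ell)}$ of dimension $n$ that extends to $W_{F}$, and $G_{\phi}$ is a product of $\mathrm{Res}_{E_{i}|F}\GL_{m_{i}}$ for tamely ramified $\ell$-extensions $E_{i}/F$, with $\hat{\mathbf{G}}_{\phi}=C_{\hat{\mathbf{G}}}(\phi)=\prod_{i}\GL_{m_{i}}$.

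The heart of the argument is the case where $\mathbf{G}_{\phi}$ is itself quasi-simple, to which one reduces by parabolic induction: if $\mathbf{G}_{\phi}=\prod_{i}\mathbf{M}_{\phi,i}$ with $\mathbf{M}_{\phi,i}$ quasi-simple, then $\phi$ factors through a dual Levi ${^{L}\mathbf{M}}$ of ${^{L}\mathbf{G}}$ with each factor of $\mathbf{M}_{\phi,i}$-type, the block $\Rep_{\phi,\oZl}(G)$ is obtained from $\Rep_{\phi,\oZl}(M)$ by normalized parabolic induction $i_{P}$ — which is an equivalence onto the block precisely because the isomorphism $C_{\hat{\mathbf{M}}}(\phi)\simto C_{\hat{\mathbf{G}}}(\phi)$ means the inertial supercuspidal class has stabilizer $M$ (the integral analogue of Example \ref{examples} i), valid over $\oZl$ by Helm's results on integral parabolic induction) — and the left sides match likewise. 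In the remaining quasi-simple case $\mathbf{G}=\GL_{n}$ with $\mathbf{G}_{\phi}=\mathrm{Res}_{E|F}\GL_{m}$, $nm$-type considerations aside, the tame $\ell$-inertial parameter $\phi$ is (up to the $I_{F}^{(\ell)}$-restriction of) a direct sum $\bigoplus \sigma^{\oplus m}$ for an irreducible tame $\sigma$ of dimension $d=[E:F]$, and the block $\Rep_{\phi,\oZl}(\GL_{n}(F))$ with $n=dm$ is cut out by a simple type of \cite{BK} whose Hecke algebra — by the computation there, and by its integral refinement in the level-zero/tame setting — is $\HC(m,q_{E})\otimes_{\oZl}$ (something of pro-$\ell'$ type coming from $\sigma$), matching the Hecke algebra description of $\Rep_{1,\oZl}(\GL_{m}(E))$. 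The only information used about the simple type is, as in \ref{computGL}, the residual degree $q_{E}=q_{F}^{d}$ and the degree $[E:F]=d$; an explicit equivalence is then read off from an isomorphism of these module categories, and one checks on cuspidal supports that it matches the Langlands transfer on irreducible $\oQl$-objects.

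The main obstacle is the integral (i.e. $\oZl$-coefficient) refinement of the Hecke-algebra computation in the final quasi-simple case: over $\oQl$ one invokes \cite{BK} directly, but over $\oZl$ one must know that the relevant simple type remains a type integrally — so that $e_{\phi}\Rep_{\oZl}(G)$ really is equivalent to modules over the integral form of the Hecke algebra — and that this integral Hecke algebra has the expected shape. Here the hypothesis that $\xi$, equivalently $\phi$, is \emph{tame} is essential: it forces the field $E$ entering the simple type to be tamely ramified over $F$ and, combined with the condition on centralizers, forces $[E:F]$ to be prime to $\ell$, so the ``wild'' part of the type contributes a pro-$\ell'$ twist that is inert under reduction mod $\ell$ and the integral Hecke algebra is simply $\HC_{\oZl}(m,q_{E})$; the level-$0$ case of this is classical and the tame case reduces to it after twisting. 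Once this integral structural input is in place, the compatibility with the Vign\'eras--Helm idempotents ($e_{\bar\phi}=\sum_{r_{\ell}(\phi)=\bar\phi}e_{\phi}$) and with $\oQl$-transfer is formal, and the reductions above are routine.
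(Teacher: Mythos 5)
Your proposal founders at its central step. After the (correct, and indeed routine) Shapiro and Levi reductions, you claim that the remaining quasi-simple tame block $\Rep_{\phi,\oZl}(\GL_{n}(F))$ is equivalent to modules over an integral Iwahori--Hecke algebra $\HC_{\oZl}(m,q_{E})$ via an ``integral refinement'' of the Bushnell--Kutzko computation together with an integral Borel theorem for the unipotent $\oZl$-block. That input does not exist: over $\oZl$ the unipotent block strictly contains the subcategory generated by Iwahori-fixed vectors (for suitable $\ell$ it contains cuspidal objects with no Iwahori-fixed vectors), Borel's equivalence fails integrally, and the paper states explicitly that Vign\'eras--Helm blocks are in general \emph{not} equivalent to module categories over Iwahori--Hecke algebras and that even the structure of the principal $\oZl$-block is not well understood. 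This is precisely why Theorem \ref{transferzltame} is not proved by Hecke-algebra techniques: what the paper does in \ref{prooftransferzl} is only a reduction, refining Lemma \ref{equivstatements} in the tame setting, of the general statement to two specific $L$-homomorphisms --- unramified automorphic induction with $\phi'$ tame, and totally $\ell'$-ramified base change with $\phi'=1$ --- and the equivalences in those two cases are imported from \cite{Datequiv}, where they are built from Deligne--Lusztig theory via coefficient systems on buildings, not from types. Your plan contains no substitute for that external input, so the key step is a genuine gap. (Your auxiliary claims are also off: in the tame quasi-simple case the field occurring is the \emph{unramified} extension $F_{f}$, with no constraint that $f$ be prime to $\ell$; tameness does not force the degree to be prime to $\ell$.)

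Two further points. First, the reduction itself is not just ``the same chain as in \ref{reduc_unip} keeping track of integrality'': after reducing to $\mathbf{G}_{\phi}$ quasi-simple, the strict unipotent factorization $\xi_{\varphi}$ is \emph{not} the automorphic induction morphism, and the paper has to manufacture a second, non-unipotent factorization of $\varphi$ through the automorphic-induction $L$-morphism (using a permutation-matrix section of $\NC/\NC^{\circ}$ inside the normalizer of the centralizer) and then correct by twisting by the character of $G_{\phi}$ attached to $\varphi'$, which takes values in $Z(\hat{\mathbf{G}}_{\phi})\rtimes W_{F}$; this step is absent from your outline. Second, the compatibility with the transfer of irreducible $\oQl$-representations that you claim to obtain is not part of Theorem \ref{transferzltame} at all --- the paper says explicitly that this compatibility is what is currently missing --- so your argument both targets a stronger statement and lacks the means to prove the weaker one.
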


  Beyond the restriction to tame parameters, what this theorem is
  missing at the moment is the compatibility with the 
  transfer of $\oQl$-irreducible representations. 
This theorem is not proved in this paper. We will only show in \ref{prooftransferzl} how it follows from the
results in \cite{Datequiv} where we construct equivalences in the specific  cases where  $\xi$ is either an
unramified automorphic induction, or a totally ramified base
change. Let us also note that these cases are not
obtained via Hecke algebra techniques, but by importing results from
Deligne-Lusztig theory via coefficient systems on buildings.

Remark : a less precise version of the second half of the theorem 
is that any level $0$ $\oZl$-block of $\GL_{n}$ is
equivalent to the principal $\oZl$-block of an unramified group of
type $\GL$.

\alin{A possible reduction to tame parameters}
Here we reinterpret current work in progress by G. Chinello in our
setting and show how it will imply (if successful) that Theorem
\ref{tamecase} remains true without the word ``tame''.
For this, we push our formalism so as to
reduce the general case to the tame case in the following way.
Instead of considering parameters with source $I_{F}$ or $I_{F}^{(\ell)}$,
consider the set 
$$\Phi_{\rm wild}(\mathbf{G},\oQl):=\left\{\psi:P_{F}\To{}{^{L}\mathbf{G}(\oQl)},\exists \varphi\in
\Phi(\mathbf{G},\oQl), \varphi_{|P_{F}}=\psi\right\}.$$
To any $\psi$ as above is attached a direct factor (no longer a block) $\Rep_{\psi}(G):= \bigoplus_{\phi_{|P_{F}}=\psi}\Rep_{\phi}(G)$ of
$\Rep_{\oZl}(G)$. When $\psi$ is trivial, $\Rep_{\psi}(G)$ is nothing but the level $0$
subcategory of $\Rep_{\oZl}(G)$.

The same procedure as in \ref{reduc_unip} provides us with a 
factorization $
\psi:\,\, P_{F} \To{1}
{^{L}\mathbf{G}_{\psi}}  \To{\xi}
{^{L}\mathbf{G}} 
$
 where  $\mathbf{G}_{\psi}$ is a
 tamely ramified group of $\GL$-type over $F$ such that $\hat\mathbf{G}_{\psi}=C_{\hat\mathbf{G}}(\psi)$.
In this setting, the map $\phi'\mapsto \xi\circ\phi'$ is a bijection 
$$\{\phi'\in \Phi_{\ell'-\rm inert}(\mathbf{G}_{\psi},\oQl) \hbox{ tame}\}  
\simto
\{\phi\in \Phi_{\ell'-\rm inert}(\mathbf{G},\oQl), \phi_{|P_{F}}=\psi\}$$
and moreover $\xi$ induces an isomorphism 
$C_{\hat\mathbf{G}_{\psi}}(\phi')\simto
C_{\hat\mathbf{G}}(\xi\circ\phi')$. Therefore, Conjecture \ref{reduc_unipzl} implies the
following one :

\begin{con}
  There is an equivalence of categories $\Rep_{1}({G}_{\psi})\simto\Rep_{\psi}(G)$ that
  extends the transfer associated to the embedding $\xi:\,^{L}\mathbf{G}_{\psi}\injo {^{L}\mathbf{G}}$. 
\end{con}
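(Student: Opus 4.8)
The plan is to obtain this equivalence, as the preceding paragraph announces, from Conjecture \ref{reduc_unipzl} --- equivalently, by Lemma \ref{equivstatements}, from Conjecture \ref{transferzl} --- by gluing the block-wise equivalences it provides, exactly as Theorem \ref{transferzltame} assembles the equivalences of \cite{Datequiv} over an entire level $0$ subcategory. The underlying pattern is that of Corollary \ref{reduc_unip}, whose $\oQl$-counterpart for parameters on $P_{F}$ is already within reach here (the case $K_{F}=P_{F}$ of \ref{variant}); what is at stake is the $\oZl$-lift. I would begin by matching two product decompositions: on one side $\Rep_{\psi}(G)=\bigoplus_{\phi_{|P_{F}}=\psi}\Rep_{\phi}(G)$ by definition, the sum over $\ell'$-inertial parameters $\phi$ of $\mathbf{G}$ restricting to $\psi$ on $P_{F}$; on the other, $\mathbf{G}_{\psi}$ being tamely ramified, $\Rep_{1}(G_{\psi})$ is the level $0$ subcategory of $\Rep_{\oZl}(G_{\psi})$, which by \ref{tamecase} equals $\bigoplus_{\phi'}\Rep_{\phi'}(G_{\psi})$ over the \emph{tame} $\ell'$-inertial parameters $\phi'$ of $\mathbf{G}_{\psi}$; the bijection $\phi'\mapsto\xi\circ\phi'$ displayed above identifies the two index sets.

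For fixed tame $\phi'$, with $\phi:=\xi\circ\phi'$, I would then produce an equivalence $\Rep_{\phi'}(G_{\psi})\simto\Rep_{\phi}(G)$ as a composite of three. First, the isomorphism $C_{\hat\mathbf{G}_{\psi}}(\phi')\simto C_{\hat\mathbf{G}}(\xi\circ\phi')$ recorded above, together with $\hat\mathbf{G}_{\psi}=C_{\hat\mathbf{G}}(\psi)$, $\hat\mathbf{G}_{\phi}=C_{\hat\mathbf{G}}(\phi)$ and the rigidity of groups of $\GL$-type with prescribed outer $W_{F}$-action on their dual, identifies $(G_{\psi})_{\phi'}$ with $G_{\phi}$. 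Second, Theorem \ref{transferzltame} applied to $\mathbf{G}_{\psi}$ and the \emph{tame} parameter $\phi'$ gives $\Rep_{\phi'}(G_{\psi})\simto\Rep_{1}((G_{\psi})_{\phi'})=\Rep_{1}(G_{\phi})$. Third, Conjecture \ref{reduc_unipzl} applied to $\mathbf{G}$ and $\phi$ gives $\Rep_{1}(G_{\phi})\simto\Rep_{\phi}(G)$. Summing over all tame $\phi'$ yields $\Rep_{1}(G_{\psi})\simto\Rep_{\psi}(G)$. For compatibility with the transfer $\xi_{*}$ on $\oQl$-irreducibles I would argue block by block, using that $\xi\circ\xi_{\varphi'}$ is $\hat\mathbf{G}$-conjugate to the $\xi_{\varphi}$ attached to $\varphi:=\xi\circ\varphi'$ --- a routine check on the unipotent factorizations of \ref{reduc_unip}, since $\xi$ carries $C_{\hat\mathbf{G}_{\psi}}(\phi')$ onto $C_{\hat\mathbf{G}}(\phi)$ --- together with the transfer-compatibility of the three constituents.

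The main obstacle is that, beyond tame parameters, not all of these inputs are available: Conjecture \ref{reduc_unipzl} is open, and even the known tame case, Theorem \ref{transferzltame}, has not yet been shown to interpolate the transfer of $\oQl$-irreducibles, so this route would for now deliver only the bare equivalence of categories. The realistic path to an unconditional statement is to prove it directly, via types, which would then retroactively settle Conjecture \ref{reduc_unipzl} and the non-tame case of Theorem \ref{transferzltame}: reduce to $\mathbf{G}=\GL_{n}$ by a non-commutative Shapiro argument (Corollary \ref{coroShapiro}), then, by parabolic induction, to the case where $\mathbf{G}_{\psi}$ is quasi-simple, where $\Rep_{\psi}(\GL_{n}(F))$ is cut out by a single Bushnell--Kutzko simple type and the problem becomes the computation of its \emph{integral} endomorphism algebra --- expected to be an integral affine Hecke algebra whose parameters are read off from the degree and residual degree of the field entering the type, hence match those governing the level $0$ blocks of $\mathbf{G}_{\psi}$. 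This is precisely the work in progress of G.~Chinello, and it is the step I expect to be the crux: over $\oZl$, as opposed to $\oQl$, these types and their Hecke algebras are markedly subtler, and everything turns on controlling the integral --- not merely the generic --- structure.
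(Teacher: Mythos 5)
Your proposal matches the paper's own treatment: this statement is a conjecture there, given no independent proof beyond the observation that it follows from Conjecture \ref{reduc_unipzl} (equivalently \ref{transferzl}) via the displayed bijection between tame $\ell'$-inertial parameters of $\mathbf{G}_{\psi}$ and $\ell'$-inertial parameters of $\mathbf{G}$ restricting to $\psi$, together with the centralizer isomorphism --- exactly the block-wise gluing you spell out, with the same caveats about the conditional input and the missing transfer-compatibility in Theorem \ref{transferzltame}. Your unconditional fallback via comparison of integral Hecke algebras of simple characters ($\beta$-extensions) with those of first principal congruence subgroups is precisely the Chinello program the paper invokes for the weakened form, so the two accounts coincide.
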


Conversely, if the equivalence predicted in the last  conjecture exists, it
 has to restrict to an equivalence
$\Rep_{\phi'}(G_{\psi})\simto \Rep_{\xi\circ\phi'}(G)$ for all 
$\phi'\in \Phi_{\ell'-\rm inert}(\mathbf{G}_{\psi},\oQl)$. Therefore the latter
conjecture, together with Conjecture \ref{reduc_unipzl} restricted to tame parameters,
implies the full Conjecture \ref{reduc_unipzl}. The same is true if we weaken all these
statements by removing the compatibility with transfer of irreducible $\oQl$-representations.


Now, the point is that the weakened form of the last conjecture can be attacked by Hecke algebra
techniques. Namely, the core of the problem is to exhibit an isomorphism between the Hecke
algebra of a simple character (or rather, of its $\beta$-extension) of $\GL_{n}(F)$ and that
of the first principal congruence subgroup of an appropriate $\GL_{n'}(F')$. This is exactly
what Chinello is currently doing.

\subsection{More general groups}

Because its formulation fits well with Langlands' functoriality
principle, a suitable version of Theorem \ref{transferql}
should hold for all $L$-homomorphisms. In this subsection we speculate on how it should
work in an ``ideal world'', in which 
Langland's parametrization is known and satisfies some natural properties.
In a forthcoming work, we will treat groups of \emph{classical type,} meaning groups which
are products of restriction of scalars of quasi-split classical groups, 
where all we need is available, and the desired equivalence of categories will be
extracted from the work of Heiermann \cite{Heiermann_equiv}.

\alin{An ideal world} \label{ideal}
Suppose we knew the existence of a coarse Langlands' parametrization
map $\Irr{\oQl}{G}\To{}  \Phi(\mathbf{G},\oQl)$, $\pi\mapsto \varphi_{\pi}$,
 for any reductive group $\mathbf{G}$ over  any $p$-adic field $F$,
 and suppose further that these parametrizations were compatible with
 parabolic induction as in \cite[Conj. 5.2.2]{Hainesstable}.
This means that if $\pi$ is an irreducible subquotient of some
parabolically induced
representation $\ip{M,P}{G}\sigma$, 
then ${\varphi_{\pi}}_{|W_{F}} \sim {^{L}\iota}\circ {\varphi_{\sigma}}_{|W_{F}}$, where $^{L}\iota :
{^{L}\mathbf{M}} \injo {^{L}\mathbf{G}}$ is any embedding dual to $\mathbf{M}\injo \mathbf{G}$.
Then, as in the case of groups of $\GL$-type, Bernstein's decomposition implies a
decomposition
$$ \Rep_{\oQl}(G) =\prod_{\phi\in \Phi_{\rm inert}(\mathbf{G},\oQl)} \Rep_{\phi}(G)$$
where $\Phi_{\rm inert}(\mathbf{G},\oQl)\subset H^{1}(I_{F},\hat\mathbf{G}(\oQl))^{W_{F}}$
 is the set of $\hat\mathbf{G}$-conjugacy classes of
continuous sections $I_{F}\To{}{^{L}\mathbf{G}}(\oQl)$ that admit an extension to an $L$-parameter in
$\Phi(\mathbf{G},\oQl)$, and the direct factor category 
$\Rep_{\phi}(G)$ is characterized by its simple objects, which are  
 the irreducible
representations $\pi$ such ${\varphi_{\pi}}_{|I_{F}}\sim \phi$. 

We note that these desiderata are now settled for quasi-split classical groups. Namely,
the existence of Langlands' parametrization was obtained by Arthur for symplectic and
orthogonal groups and by Mok for unitary groups, while the compatibility with parabolic
induction follows from work of Moeglin for all these groups.

The main difference with
the case of groups of $\GL$-type is that $\Rep_{\phi}(G)$ is not necessarily a single Bernstein
block. For example $\Rep_{1}({\rm Sp}_{4}(F))$ contains the principal series
block and a supercuspidal unipotent representation.
 Equivalently, the corresponding idempotent $e_{\phi}$ of $\ZG_{\oQl}(G)$ need not be primitive.
Note that $e_{\phi}$ actually lies in the ``stable'' center $\ZG^{\rm st}_{\oQl}(G)$  defined in
\cite[5.5.2]{Hainesstable}, since an $L$-packet is either contained in $\Rep_{\phi}(G)$ or disjoint
from it. However, the following example shows that $e_{\phi}$ needs not even be primitive in this stable center.

\medskip
\emph{Example.} Suppose $\mathbf{G}=\SL_{2}$ with $p$ odd, and let
$\phi$ be given by $i\in I_{F}\mapsto {\rm diag}(\varepsilon(i),1)\in \PGL_{2}$ with $\varepsilon$
the unique non-trivial quadratic character of $I_{F}$. Then an extension $\varphi$ of
$\phi$ to $W_{F}$ has two possible shapes : either it is valued in the maximal torus of
$\PGL_{2}$ or it takes any Frobenius substitution to an order $2$ element that normalizes
non trivially this torus. In the language of \cite[Def. 5.3.3]{Hainesstable}, these
extensions (called infinitesimal characters in \emph{loc. cit.}) fall in two distinct
inertial classes $[\varphi_{ps}]\sqcup[\varphi_{0}]$. Accordingly, we have a decomposition
$\Rep_{\phi}(G)=\Rep_{[\varphi_{ps}]}(G)\times \Rep_{[\varphi_{0}]}(G)$, where
$\Rep_{[\varphi_{ps}]}(G)$ is the block formed by (ramified) principal series associated
to the character $\varepsilon\circ{\rm Art}_{F}^{-1}$ of the maximal compact subgroup of
the diagonal torus of $\SL_{2}(F)$, while $\Rep_{[\varphi_{0}]}(G)$ is the category
generated by cuspidal representations in the $L$-packet associated to $\varphi_{0}$. The
cardinality of this $L$-packet is that of the centralizer of $\varphi_{0}$, i.e. $4$, so
 that $\Rep_{[\varphi_{0}]}(G)=\Rep_{\oQl}(\{1\})^{\times 4}$. Accordingly, the
 idempotent $e_{\phi}\in\ZG_{\oQl}(G)$ decomposes as
 $e_{\phi}=e_{[\varphi_{ps}]}+e_{[\varphi_{0}]}$ in $\ZG_{\oQl}(G)$, with both
 $e_{[\varphi_{ps}]}$, $e_{[\varphi_{0}]}$ belonging to the ``stable Bernstein center''
 (as in  \cite[5.5.2]{Hainesstable}), showing that $e_{\phi}$ is not primitive, even in the ``stable sense''.

 \begin{rak}
   The decomposition of $\Rep_{\phi}(G)$ in the above example can be
   generalized whenever the centralizer $C_{\hat\mathbf{G}}(\phi)$ is
   \emph{not connected}. To see how, let us choose an extension
   $\varphi$ of $\phi$ to $W_{F}$, and let us take up the procedure of
   \ref{reduc_unip}. So, conjugacy under $\varphi(w)$ still endows
   $C_{\hat\mathbf{G}}(\phi)$, and also
   $C_{\hat\mathbf{G}}(\phi)^{\circ}$, with an action of
   $W_{F}/I_{F}$. But while the outer action $W_{F}/I_{F}\To{}{\rm
     Out}(C_{\hat\mathbf{G}}(\phi))$ is still independent of
   $\varphi$, the outer action $W_{F}/I_{F}\To{}{\rm
     Out}(C_{\hat\mathbf{G}}(\phi)^{\circ})$ actually depends on
   $\varphi$. More precisely, if $\eta_{\varphi}$ denotes the image of
   $\rm Frob$ by this outer action, then the set $A_{\phi}$ of all
   possible $\eta_{\varphi}$ is a single $\pi_{0}(C_{\hat\mathbf
     G}(\phi))$-orbit inside ${\rm
     Out}(C_{\hat\mathbf{G}}(\phi)^{\circ})$.
   Now, observe that if $\varphi, \varphi'$ are inertially equivalent
   in the sense of \cite[Def. 5.3.3]{Hainesstable}, then
   $\eta_{\varphi}=\eta_{\varphi'}$. This is because $\varphi'({\rm
     Frob})=z\varphi({\rm Frob})$ for some $z$ that belongs to some
   torus contained in $C_{\hat\mathbf{G}}(\phi)$.  Therefore we get a
   decomposition
$$ \Rep_{\phi}(G)=\prod_{\eta\in A_{\phi}} \Rep_{\phi,\eta}(G)$$
where $\Rep_{\phi,\eta}(G)$ is the ``stable'' Bernstein summand of
$\Rep_{\oQl}(G)$ whose irreducible objects $\pi$ satisfy
${\varphi_{\pi}}_{|I_{F}}\sim\phi$ and $\eta_{\varphi}=\eta$.
It is plausible that the corresponding idempotents are primitive in the stable Bernstein center.
\end{rak}

\alin{The transfer problem}
 Suppose given an $L$-morphism $\xi:\,{^{L}\mathbf{G'}}\To{}{^{L}\mathbf{G}}$ and an
inertial parameter $\phi'\in\Phi_{\rm inert}(\mathbf{G'},\oQl)$ such that $\xi$ induces an isomorphism
$C_{\hat\mathbf{G}'}(\phi') \simto C_{\hat\mathbf{G}}(\phi)$. In this generality, 
new issues  arise on the path to a possible generalization of Theorem \ref{transferql}.

The first one  is related to the internal structure of $L$-packets. Suppose temporarily  that
$\mathbf{G}$ and $\mathbf{G'}$ are \emph{quasi-split}. It is then expected that
the $L$-packet of $\Irr{\oQl}{G'}$ associated to an extension $\varphi'$ of $\phi$  is parametrized by 
irreducible representations of the component group $\pi_{0}(C_{\hat
  \mathbf{G'}}(\varphi')/Z(\hat\mathbf{G}')^{W_{F}})$. In our
setting, $\xi$ has to induce an isomorphism 
$C_{\hat \mathbf{G'}}(\varphi')\simto C_{\hat\mathbf{G}}(\xi\circ\varphi')$.
\begin{lem}
If
$\ker(\xi)$ is commutative\footnote{With a bit more work, we can
  weaken the hypothesis to :   $Z((\ker(\xi)^{\circ})_{\rm der})$ has
  order prime to $p$. The lemma may be true with no hypothesis at
  all}, $\xi$ also induces an isomorphism
$Z(\hat\mathbf{G}')^{W_{F}}\simto C_{\hat\mathbf{G}}(\xi)$.
\end{lem}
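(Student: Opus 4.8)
The plan is to pass through an extension of $\phi'$ and reduce everything to a purely group-theoretic statement about commutative normal subgroups of $\hat{\mathbf{G}}'$. First I would fix an extension $\varphi'\in\Phi(\mathbf{G}',\oQl)$ of $\phi'$; as recorded just above the statement, $\xi$ then restricts to an isomorphism $C_{\hat{\mathbf{G}}'}(\varphi')\simto C_{\hat{\mathbf{G}}}(\xi\circ\varphi')$ (intersect the given isomorphism $C_{\hat{\mathbf{G}}'}(\phi')\simto C_{\hat{\mathbf{G}}}(\phi)$ with the centralizer of $\varphi'({\rm Frob})$, resp. of $\xi(\varphi'({\rm Frob}))$, and use injectivity of $\xi$ on $C_{\hat{\mathbf{G}}'}(\phi')$). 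Next I would record two elementary facts: for $z\in Z(\hat{\mathbf{G}}')$ conjugation by $\varphi'(w)$ sends $z$ to $w(z)$, so $Z(\hat{\mathbf{G}}')^{W_{F}}=Z(\hat{\mathbf{G}}')\cap C_{\hat{\mathbf{G}}'}(\varphi')$; and ${}^{L}\mathbf{G}'$ is generated by $\hat{\mathbf{G}}'$ and $\varphi'(W_{F})$, so the centralizer $C_{\hat{\mathbf{G}}}(\xi)$ of ${\rm Im}(\xi)$ in $\hat{\mathbf{G}}$ equals $C_{\hat{\mathbf{G}}}(\xi(\hat{\mathbf{G}}'))\cap C_{\hat{\mathbf{G}}}(\xi\circ\varphi')$. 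From these, $\xi$ maps $Z(\hat{\mathbf{G}}')^{W_{F}}$ into $C_{\hat{\mathbf{G}}}(\xi)$ (a $W_{F}$-fixed central element lands in something commuting with both $\xi(\hat{\mathbf{G}}')$ and $\xi(\varphi'(W_{F}))$), and this map is injective because $\xi$ is injective on $C_{\hat{\mathbf{G}}'}(\varphi')$.

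The content is thus surjectivity. Given $h\in C_{\hat{\mathbf{G}}}(\xi)$, it commutes with $\xi(\varphi'(W_{F}))$, hence $h\in C_{\hat{\mathbf{G}}}(\xi\circ\varphi')$, so $h=\xi(g)$ for a unique $g\in C_{\hat{\mathbf{G}}'}(\varphi')$. Since $h$ also commutes with $\xi(\hat{\mathbf{G}}')$ and $\ker(\xi)\subseteq\hat{\mathbf{G}}'$ (as $\xi$ is an $L$-morphism), I would deduce $[g,g']\in N:=\ker(\xi)$ for every $g'\in\hat{\mathbf{G}}'$. It then suffices to prove the following: \emph{if $N$ is a commutative closed normal subgroup of the connected reductive group $\hat{\mathbf{G}}'$, then every $g\in\hat{\mathbf{G}}'$ with $[g,\hat{\mathbf{G}}']\subseteq N$ lies in $Z(\hat{\mathbf{G}}')$.} Granting this, $g\in Z(\hat{\mathbf{G}}')\cap C_{\hat{\mathbf{G}}'}(\varphi')=Z(\hat{\mathbf{G}}')^{W_{F}}$, and $h=\xi(g)$ is in the image, finishing the lemma.

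For the group-theoretic claim I would argue in three steps. First, $N^{\circ}$ is a connected commutative normal subgroup of $\hat{\mathbf{G}}'$, so in characteristic $0$ its Lie algebra is a commutative ideal of the reductive Lie algebra ${\rm Lie}(\hat{\mathbf{G}}')$, hence central; thus $N^{\circ}\subseteq Z(\hat{\mathbf{G}}')^{\circ}$. Second, $N/N^{\circ}$ is finite, so the connected group $\hat{\mathbf{G}}'$ acts trivially on it by conjugation, i.e. $[\hat{\mathbf{G}}',N]\subseteq N^{\circ}$; combining this with the commutator identity $[g,g'g'']=[g,g']\cdot g'[g,g'']g'^{-1}$ shows that $g'\mapsto[g,g']\bmod N^{\circ}$ is a (algebraic) homomorphism $\hat{\mathbf{G}}'\to N/N^{\circ}$, which is trivial by connectedness, so in fact $[g,\hat{\mathbf{G}}']\subseteq N^{\circ}$. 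Third, since $N^{\circ}$ is now central the same identity shows $g'\mapsto[g,g']$ is an algebraic homomorphism $\hat{\mathbf{G}}'\to N^{\circ}$ into an abelian group, hence factors through $\hat{\mathbf{G}}'/\hat{\mathbf{G}}'_{\rm der}$ and so kills $\hat{\mathbf{G}}'_{\rm der}$; thus $g$ centralizes $\hat{\mathbf{G}}'_{\rm der}$, it centralizes $Z(\hat{\mathbf{G}}')^{\circ}$ automatically, and these two subgroups generate $\hat{\mathbf{G}}'$, so $g\in Z(\hat{\mathbf{G}}')$.

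The only real obstacle is this last claim; everything around it is formal bookkeeping with the hypothesis. It is also exactly here that commutativity of $\ker(\xi)$ is used: if $N^{\circ}$ contained an almost-simple factor of $\hat{\mathbf{G}}'_{\rm der}$, the commutator map would no longer be a homomorphism into an abelian group and the reduction to the derived subgroup would fail. Proving the lemma under the weaker hypothesis of the footnote would require replacing this step by a finer study of how $g$ permutes and acts on the almost-simple factors contained in $\ker(\xi)^{\circ}$, controlling their centers away from $p$ — which is precisely where the residue characteristic would enter.
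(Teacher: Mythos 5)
Your argument is correct and is essentially the paper's: injectivity comes from $Z(\hat\mathbf{G}')^{W_{F}}\subset C_{\hat\mathbf{G}'}(\phi')$, and surjectivity by writing $h\in C_{\hat\mathbf{G}}(\xi)$ as $\xi(g)$ for the unique $g\in C_{\hat\mathbf{G}'}(\phi')$ and killing the commutators of $g$ with $\varphi'(W_{F})$ and with $\hat\mathbf{G}'$ inside $\ker(\xi)$ (your preliminary isomorphism $C_{\hat\mathbf{G}'}(\varphi')\simto C_{\hat\mathbf{G}}(\xi\circ\varphi')$ is exactly the paper's step $[g,\im(\varphi')]\subset C_{\hat\mathbf{G}'}(\phi')\cap\ker(\xi)=\{1\}$ packaged in advance). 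The only real difference is the finishing move: where you use centrality of $\ker(\xi)^{\circ}$ and the commutator homomorphism into it to kill $\hat\mathbf{G}'_{\rm der}$, the paper notes $[g,\hat\mathbf{G}']=[g,\hat\mathbf{G}'_{\rm der}]\subset\hat\mathbf{G}'_{\rm der}\cap\ker(\xi)$, which is finite for the same reason (commutativity makes $\ker(\xi)^{\circ}$ central), and concludes at once by connectedness of $\hat\mathbf{G}'$.
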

\begin{proof}
  Here, as usual, $C_{\hat\mathbf{G}}(\xi)$ is the centralizer of the image of $\xi$. So
  we clearly have $\xi(Z(\hat\mathbf{G}')^{W_{F}})\subset
  C_{\hat\mathbf{G}}(\xi)$. Moreover, since $Z(\hat\mathbf{G}')^{W_{F}}\subset C_{\hat
    \mathbf{G'}}(\phi')$, our running assumptions imply that
  $\xi_{|Z(\hat\mathbf{G}')^{W_{F}}}$ is injective. It remains to prove
  surjectivity. Again we have $C_{\hat\mathbf{G}}(\xi)\subset C_{\hat\mathbf{G}}(\phi)$,
  so any element of $C_{\hat\mathbf{G}}(\xi)$ has the form $\xi(x)$ for a unique $x\in
  C_{\hat\mathbf{G}'}(\phi')$, and we need to prove that $x\in
  Z(\hat\mathbf{G}')^{W_{F}}$.

Pick an extention $\varphi'$ of $\phi'$. Since $\im(\varphi')$ normalizes $\im(\phi')$, it
also normalizes $C_{\hat\mathbf{G}'}(\phi')$, so that $[x,\im(\varphi')]\subset
C_{\hat\mathbf{G}'}(\phi')\cap \ker(\xi)=\{1\}$. On the other hand,
$[x, \hat\mathbf{G}']= [x,\hat\mathbf{G}'_{\rm der}]\subset
\hat\mathbf{G}'_{\rm der}\cap \ker(\xi)$ which is finite. Since
 $\hat\mathbf{G}'$  is connected, it follows that
$[x,\hat\mathbf{G}']=\{1\}$. Finally, since
$^{L}\mathbf{G'}=\im(\varphi')\hat\mathbf{G}'$, we get $[x,{^{L}\mathbf{G}'}]=\{1\}$,
\emph{i.e.} $x\in Z(\hat\mathbf{G}')^{W_{F}}$.
\end{proof}
Assume from now on that $\ker(\xi)$ is commutative.
Since $Z(\hat\mathbf{G})^{W_{F}}\subset C_{\hat\mathbf{G}}(\xi)$, we get an embedding of
$Z(\hat\mathbf{G})^{W_{F}}$ in $Z(\hat\mathbf{G}')^{W_{F}}$,
 whence a map
$$h_{\xi}:\,H^{1}(F,\mathbf{G'})\To{}H^{1}(F,\mathbf{G}),$$ 
through Kottwitz's isomorphism \cite[(6.4.1)]{KottCusp}. 
Recall now that to any $\alpha\in
H^{1}(F,\mathbf{G})$ is associated a ``pure'' inner form $\mathbf{G}_{\alpha}$ of
$\mathbf{G'}$. 


\begin{expec} With the foregoing assumptions, for any $\alpha\in H^{1}(F,\mathbf{G})$ 
  there should exist an equivalence of categories $\xi_{*}:\,\prod_{\beta\in h_{\xi}^{-1}(\alpha)}
  \Rep_{\phi'}(G'_{\beta})\simto \Rep_{\phi}(G_{\alpha})$ such that, for any
  irreducible $\pi'\in \Rep_{\phi'}(G'_{\beta})$ we have $\varphi_{\xi_{*}(\pi')}=\xi\circ\varphi_{\pi'}$
\end{expec}

\emph{Example.} Suppose $\mathbf{G}=\SL_{2}$, $\mathbf{G'}=\mathbf{U}(1)$ (norm $1$
elements in a quadratic unramified extension) and $\xi:\, \GM_{m}\rtimes
W_{F}\To{}\PGL_{2}\times W_{F}$ is the automorphic induction homomorphism. Then start with
$\phi'=\theta:\,I_{F}\To{}\GM_{m}$ a character such that $\theta^{\sigma^{2}}=\theta$ and
$\theta^{\sigma}\theta^{-1}$ has order $>2$ (with $\sigma$ a Frobenius element). Then $\Rep_{\phi}(G)$ is generated by $2$
distinct irreducible cuspidal representations of $G$. The centralizer
$C_{\hat\mathbf{G}'}(\phi)=\hat\mathbf{G}'$ is connected,  we have $H^{1}(F,\mathbf{G})=\{1\}$
while $H^{1}(F,\mathbf{G}')$ has $2$ elements. Both pure inner forms of $\mathbf{G'}$ are isomorphic to
$\mathbf{G'}$ and $\Rep_{\phi'}(G')$ is generated by a single irreducible
representation (a character). This picture generalizes to supercuspidal $L$-packets
constructed in \cite{dBR}. 

\medskip

\def\SO{{\rm SO}}
\def\Sp{{\rm Sp}}
\def\SL{{\rm SL}}
\emph{Example.} Suppose that $p$ is odd, $\mathbf{G}=\SO_{5}$, $\mathbf{G'}=\SO_{3}\times\SO_{3}$ 
 and that $\xi:\, \SL_{2}\times\SL_{2}\injo \Sp_{4}$  is an isomorphism onto the
 centralizer of the element $\xi(1,-1)$ in $\Sp_{4}$. Take $\phi':=(1,\varepsilon)$ with
 $\varepsilon$ the unique quadratic non trivial character of $I_{F}$. In particular,
 $C_{\hat\mathbf{G}'}(\phi')=\SL_{2}\times\SL_{2}\simto C_{\hat\mathbf{G}}(\phi)$ is connected.
We have $H^{1}(F,\mathbf{G})=\{\pm 1\}$,  $H^{1}(F,\mathbf{G}')=\{\pm 1\}\times\{\pm 1\}$
and $h_{\xi}$ is the multiplication map. The category $\Rep_{\phi'}(G')$ is a Bernstein block coming
from the maximal torus. On the other hand, ${G'}_{(-1,-1)} =
(D^{\times}/F^{\times})\times(D^{\times}/F^{\times})$, with $D$ the quaternion algebra, so that 
$\Rep_{\phi'}(G'_{(-1,-1)})$ decomposes into 4 blocks, each one  generated by a quadratic character of the form
$\psi\cdot\chi$ with $\psi$ a quadratic unramified character of $D^{\times}$ and $\chi$ a
quadratic ramified character of $D^{\times}$.
Accordingly, $\Rep_{\phi}(G)$ is the sum of a Bernstein block coming from the torus and
$4$ supercuspidal blocks, associated to the four Langlands parameters $\varphi=\xi\circ
\varphi'$ with $\varphi' = (\psi,\chi)\otimes \Delta : W_{F}\times\SL_{2}\injo
\SL_{2}\times\SL_{2}$ and where now $\psi$ is a quadratic unramified character of $W_{F}$ and $\chi$ a
quadratic ramified character of $W_{F}$.

\medskip

A second issue towards a generalization of Theorem \ref{transferql}
arises when we try to go to more general non quasi-split groups. A similar ``expectation'',
involving Kottwitz' $B(G)_{\rm bas}$ instead of $H^{1}(F,G)$, might apply to  ``extended pure forms'' of
quasi-split groups, enabling one to reach inner forms of groups with connected center.

Another possibility is to add a suitable relevance condition. For example, consider condition
\begin{center}
(R)   An extension $\varphi'\in H^{1}(W_{F},\hat\mathbf{G})$ of $\phi'$ is relevant
  for $\mathbf{G'}$ if and only if $\xi\circ\varphi'$ is relevant for $\mathbf{G}$. 
\end{center}
The following statement seems to pass the crash-test of inner forms of linear groups :

\medskip
\emph{Let $\xi$ be as in the begining of this paragraph, and suppose further that
  condition (R) is satisfied.
Then  there is an equivalence of categories $\xi_{*}:\,\prod_{\beta\in \ker(h_{\xi}^{-1})}
  \Rep_{\phi'}(G'_{\beta})\simto \Rep_{\phi}(G)$ such that, for any
  irreducible $\pi'\in \Rep_{\phi'}(G'_{\beta})$ we have $\varphi_{\xi_{*}(\pi')}=\xi\circ\varphi_{\pi'}$.
}

\alin{The ``reduction to unipotent'' problem}
Here the obvious new difficulty is that the centralizer $C_{\hat\mathbf{G}}(\phi)$ may not
be connected. When it is connected, the same procedure as for groups of $\GL$-type allows us
to associate to $\phi$ an unramified group
$\mathbf{G}_{\phi}$ with $\hat\mathbf{G}_{\phi}=C_{\hat\mathbf{G}}(\phi)$, together with 
factorization(s) of $\phi$ as 
$I_{F}\To{1} {^{L}\mathbf{G}}_{\phi}\To{\xi} {^{L}\mathbf{G}}$, see \ref{Gphiconnected}. Therefore, the natural expectation is :

\begin{expec} Assume $\mathbf{G}$  quasi-split, $C_{\hat\mathbf{G}}(\phi)$ 
  connected,  and let $\xi$ be as above. Then
for any $\alpha\in H^{1}(F,\mathbf{G})$ 
  there should exist an equivalence of categories $\xi_{*}:\,\prod_{\beta\in h_{\xi}^{-1}(\alpha)}
  \Rep_{1}(G_{\phi,\beta})\simto \Rep_{\phi}(G_{\alpha})$ such that, for any
  irreducible $\pi'\in \Rep_{\phi'}(G'_{\beta})$ we have
  $\varphi_{\xi_{*}(\pi')}=\xi\circ\varphi_{\pi'}$.
\end{expec}

\emph{Examples.} In the previous example with $\mathbf{G}=\SL_{2}$, we have
$\mathbf{G}_{\phi}=\mathbf{U}(1)$ and the expectation is therefore satisfied. More
generally, for $\phi$ the restriction of a tame parameter corresponding to a supercuspidal
$L$-packet as considered in \cite{dBR}, the expectation holds (note that in this case,
$\mathbf{G}_{\phi}$ is an anisotropic unramified torus). Also the last example above gives
us an instance of this expectation in which
$\mathbf{G}=\SO_{5}$ and $\mathbf{G}_{\phi}=\SO_{3}\times\SO_{3}$.

\medskip
More mysterious is the case when $C_{\hat\mathbf{G}}(\phi)$ is not connected. In
\ref{Gphinonconnected} we 
define several non-connected reductive groups $\mathbf{G}_{\phi}^{\tau}$ over $F$, where
$\tau$ belongs to a 
set $\o\Sigma(\phi)$ equipped with a map to $H^{1}(F,\mathbf{G})$. 
We think that a similar
statement to that above is plausible, with this collection of groups replacing the
$\mathbf{G}_{\phi,\beta}$, at least when $C_{\hat\mathbf{G}}(\phi)$ is ``quasi-split'' in
the sense that it is isomorphic to
$C_{\hat\mathbf{G}}(\phi)^{\circ}\rtimes\pi_{0}$ with $\pi_{0}$ acting by some
\'epinglage-preserving automorphisms. 

More precisely, for these non-connected groups
 $\mathbf{G}_{\phi}^{\tau}$, there is a natural notion of ``unipotent factor''
 $\Rep_{1}(G_{\phi}^{\tau})$ of the
 category $\Rep_{\oQl}(G_{\phi}^{\tau})$. Namely a representation of
 $G_{\phi}^{\tau}$ is unipotent if the restriction to
 $\mathbf{G}_{\phi}^{\tau,\circ}(F)$ is unipotent. 
Now,  when $C_{\hat\mathbf{G}}(\phi)$ is ``quasi-split'', we expect that $\Rep_{\phi}(G)$
will be equivalent to the product of all $\Rep_{1}(G_{\phi}^{\tau})$ with $\tau$
mapping to $1\in H^{1}(F,\mathbf{G})$.

\medskip
\emph{Example.}  Take up the example of paragraph \ref{ideal} for
$\mathbf{G}=\SL_{2}$. For the $\phi$ considered there, $C_{\hat\mathbf{G}}(\phi)$ is the
normalizer of the diagonal torus. Our construction, explicitly detailed in \ref{examplesl_2}, provides 3 groups :
$\mathbf{G}_{\phi}^{ps}=\GM_{m}\rtimes\ZM/2\ZM$ and  $\mathbf{G}_{\phi}^{1}=\mathbf{G}_{\phi}^{2}=\mathbf{U(1)}\rtimes \ZM/2\ZM$, with the
generator of $\ZM/2\ZM$ acting by the inverse map in each case.
For $i=1,2$, the unipotent factor of $\Rep_{\oQl}(G_{\phi}^{i})$ consists of representations
that are trivial on $U(1)$, therefore $\Rep_{1}(G_{\phi}^{i})=
\Rep_{\oQl}(\ZM/2\ZM)\simeq\Rep_{\oQl}(\{1\})^{2}$ and these two copies
account for the supercuspidal factor $\Rep_{[\varphi_{0}]}(G)=\Rep_{\oQl}(\{1\})^{4}$. On the
other hand, $\Rep_{1}(G_{\phi}^{ps})=\Rep_{\oQl}(\ZM\rtimes\ZM/2\ZM)$ which is
indeed equivalent to $\Rep_{[\varphi_{ps}]}(G)$.

\medskip
We hope that the recent results of Heiermann \cite{Heiermann_equiv} will enable us to confirm the
above ``expectations'' for groups of classical type. In this case, the disconnected
centralizers are explained by even orthogonal factors (in the last example, the groups $\mathbf{G}_{\phi}$
are ``pure'' inner forms of $\mathbf{O}_{2}$).

\section{Details and proofs}

\emph{Notation.} Unless stated otherwise,  $\hat\mathbf{G}$ and
$^{L}\mathbf{G}$ will stand respectively for $\hat\mathbf{G}(\oQl)$ and
$^{L}\mathbf{G}(\oQl)$.

Given an exact sequence $H\injo \wt H\twoheadrightarrow W$ of topological groups,
we denote by $\Sigma(W,\wt H)$ the set of continuous group sections $W\To{} \wt H$ that split the
sequence and by $\o\Sigma(W,\wt H)$ the set of $H$-conjugacy classes in $\Sigma(W,\wt H)$.
If we fix $\sigma\in\Sigma(W,\wt H)$, conjugation by $\sigma(w)$ induces an action $\alpha_{\sigma}$ of $W$
on $H$ and a set-theoretic continuous projection $\pi_{\sigma}: \wt H\To{} H$. Then the map
$\sigma'\mapsto \pi_{\sigma}\circ \sigma'$ is a bijection $\Sigma(W,\wt H)\simto
Z^{1}_{\alpha_{\sigma}}(W,H)$ that descends to a bijection $\o\Sigma(W,\wt H)\simto
H^{1}_{\alpha_{\sigma}}(W,H)$. 

Suppose that $H=\mathbf{H}(\oQl)$ for some algebraic group $\mathbf{H}$.
We will say that the extension $\wt H$ of $W$ by $H$ is \emph{almost algebraic} if it is
the pullback of an extension of some \emph{finite} quotient of $W$ by $H$.
Equivalently, some
finite index subgroup of $W$ lifts to a normal subgroup $W' \subset \wt H$ that commutes with $H$. 
In this case, a
section $\sigma\in\Sigma(W,\wt H)$ is called \emph{admissible} if for any such $W'$
(equivalently, some $W'$) the elements $\sigma(w)$, $w\in W$  are \emph{semi-simple} in the quotient $\wt H/W'$
(which is the group of $\oQl$-points of an algebraic group).

For example, $^{L}\mathbf{G}$ is an almost algebraic extension of $W_{F}$ by
$\hat\mathbf{G}$, and the set $\Phi_{\rm Weil}(\mathbf{G},\oQl)$ of
\emph{admissible Weil parameters} (not Weil-Deligne)
is the set of admissible elements in $\Sigma(W_{F},{^{L}\mathbf{G}})$.

\subsection{The centralizer and its dual $F$-groups}
We start with a general reductive group $\mathbf{G}$ over $F$. We will
denote by  $K_{F}$ a closed normal subgroup of $W_{F}$ contained in $I_{F}$,
and we fix $\phi : K_{F}\To{}{^{L}\mathbf{G}}$  an \emph{admissible}
$K_{F}$-parameter, \emph{i.e.} the restriction to $K_{F}$ of an
admissible Langlands parameter.
As previously introduced, we denote by $C_{\hat\mathbf{G}}(\phi)$ the centralizer of
$\phi(K_{F})$ in $\hat\mathbf{G}$. By \cite[Lemma 10.1.1]{KottCusp} this is a  reductive, possibly non-connected,
subgroup of $\hat\mathbf{G}$.

\alin{Extensions of $\phi$ to $W_{F}$} \label{extensions}
By hypothesis, $\phi$ can be extended to an admissible Weil parameter $\varphi :
W_{F}\To{}{^{L}\mathbf{G}}$. Because $\varphi(w)$ normalizes $\phi(K_{F})$, it also
normalizes $C_{\hat\mathbf{G}}(\phi)$ so that, letting $w$ act by  conjugation under $\varphi(w)$, we get an action 
$$\alpha_{\varphi}:\, W_{F}/K_{F} \To{}
\Aut(C_{\hat\mathbf{G}}(\phi)).$$
Note that the restriction of this action to a finite index subgroup of $W_{F}$ is by inner
automorphisms of $C_{\hat\mathbf{G}}(\phi)$. Indeed, if $F'$ splits $\mathbf{G}$, the  
action of $W_{F'}$ is through conjugation inside the normalizer
$\NC=\NC_{\hat\mathbf{G}}(C_{\hat\mathbf{G}}(\phi))$, but $\NC^{\circ}$ has finite
index in $\NC$ and acts by inner
automorphisms since ${\rm Out}(C_{\hat\mathbf{G}}(\phi))$ is discrete.

 Now, if we pick another extension $\varphi'$ and write $\varphi'(w)=
\eta(w)\varphi(w)$ with $\eta(w)\in\hat\mathbf{G}$, then we compute that  $\eta\in
Z^{1}_{\alpha_{\varphi}}(W_{F}, C_{\hat\mathbf{G}}(\phi))$ (a 
$1$-cocycle for the action $\alpha_{\varphi}$).
So we see in particular that 
\begin{enumerate}
\item the outer action $W_{F} \To{} {\rm Out}(C_{\hat\mathbf{G}}(\phi))$ is independent of
  $\varphi$ and factors over a finite quotient.
\item the subgroup $\wt C_{\hat\mathbf{G}}(\phi):=C_{\hat\mathbf{G}}(\phi).\varphi(W_{F})$ of $^{L}\mathbf{G}$ is independent of $\varphi$.
 \item the action of $W_{F}$ on the center $Z(C_{\hat\mathbf{G}}(\phi))$ via $\alpha_{\varphi}$ is
   independent of $\varphi$ and factors over a finite quotient.
\end{enumerate}
Let us denote by $Z^{1}(W_{F},\hat\mathbf{G})_{\phi}$ the set of all cocycles that extend
$\phi$ and by $H^{1}(W_{F},\hat\mathbf{G})_{[\phi]}$ the fiber of the equivalence class of
$\phi$ in $H^{1}(K_{F},\hat\mathbf{G})$. Then the map $\eta\mapsto \eta\varphi$ clearly is
a bijection
$$Z^{1}_{\alpha_{\varphi}}(W_{F}/K_{F},C_{\hat\mathbf{G}}(\phi)) \simto
Z^{1}(W_{F},\hat\mathbf{G})_{\phi}$$
and it is easily checked that it induces a bijection
$$H^{1}_{\alpha_{\varphi}}(W_{F}/K_{F},C_{\hat\mathbf{G}}(\phi)) \simto
H^{1}(W_{F},\hat\mathbf{G})_{[\phi]}.$$
In order to see how admissibility is carried through this bijection, let us recast it in
terms of sections. Consider the extension
$$ C_{\hat\mathbf{G}}(\phi)\injo \wt C_{\hat\mathbf{G}}(\phi)\twoheadrightarrow W_{F}$$
where the middle group is that of point ii) above and the map to $W_{F}$ is induced by
the projection $^{L}\mathbf{G}\To{} W_{F}$. Quotienting by  $\phi(K_{F})$ we get an extension
$$ C_{\hat\mathbf{G}}(\phi)\injo \wt C_{\hat\mathbf{G}}(\phi)/\phi(K_{F})\twoheadrightarrow W_{F}/K_{F}.$$
Then we have 
$$\Sigma(W_{F},{^{L}\mathbf{G}})_{\phi}=\Sigma(W_{F},\wt C_{\hat\mathbf{G}}(\phi))_{\phi}
\simto \Sigma(W_{F}/K_{F},\wt C_{\hat\mathbf{G}}(\phi))$$
where the index $\phi$ means ``extends $\phi$'' and the last map takes a section
$\varphi$ to ($\varphi$ mod $\phi(K_{F})$).
The next lemma shows that $\wt C_{\hat\mathbf{G}}(\phi)$, and hence $\wt
C_{\hat\mathbf{G}}(\phi)/\phi(K_{F})$, is ``almost algebraic'' in the sense of the begining
of this section, and also that admissibility is preserved through the last bijection,
giving a bijection
$$ \Phi_{\rm Weil}(\mathbf{G},\oQl)_{[\phi]}\simto
\o\Sigma(W_{F}/K_{F},\wt C_{\hat\mathbf{G}}(\phi)/\phi(K_{F}))_{\rm adm}.$$ 

\begin{lem}
  There exist a finite extension $F'$ of $F$ and an extension $\varphi':W_{F}\To{}{^{L}\mathbf{G}}$ of $\phi$  such that  $\varphi'(w')= (1,w')$ for all $w'\in W_{F'}$. 
\end{lem}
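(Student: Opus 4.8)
The plan is to build $\varphi'$ by prescribing its restriction to $I_{F}$ to be $\varphi_{|I_{F}}$ and then choosing its value at a Frobenius so as to kill all ramification over a large enough $F'$; the heart of the matter is a divisibility statement for tori.

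\emph{Reduction.} Fix an admissible extension $\varphi$ of $\phi$ to $W_{F}$. As $I_{F}$ is profinite and $\hat\mathbf{G}$ discrete, $\varphi(I_{F})$ is finite, so $C:=C_{\hat\mathbf{G}}(\varphi(I_{F}))$ is reductive by \cite[Lemma 10.1.1]{KottCusp}, and $\varphi_{|I_{F}}$ already extends $\phi$. Let $E/F$ be the splitting field of $\mathbf{G}$. Since every open subgroup of $I_{F}$ contains $I_{M}$ for some finite Galois $M/F$ (an open subgroup corresponds to a finite extension of $F^{\mathrm{ur}}$, necessarily $MF^{\mathrm{ur}}$ with $M/F$ finite, which one enlarges to be Galois), pick a finite Galois $F_{1}/F$ with $E\subseteq F_{1}$ and $I_{F_{1}}\subseteq\ker(\varphi_{|I_{F}})$; write $W_{F}=I_{F}\rtimes\langle\mathrm{Frob}_{F}\rangle$, $f_{1}=f(F_{1}/F)$, $\theta:=\mathrm{Ad}(\varphi(\mathrm{Frob}_{F}))_{|C}$, and $\varphi(\mathrm{Frob}_{F_{1}})=(t,\mathrm{Frob}_{F_{1}})$ with $t\in\hat\mathbf{G}$ (meaningful since $F_{1}\supseteq E$, so the action is trivial over $W_{F_{1}}$). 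Using $I_{F_{1}}\subseteq\ker\varphi_{|I_{F}}$ one checks that $\mathrm{Frob}_{F_{1}}$ conjugates $I_{F}/I_{F_{1}}$ trivially, whence $t\in C$, $\theta(t)=t$ and $\theta^{f_{1}}=\mathrm{Ad}(t)_{|C}$; moreover $t$ is semisimple by admissibility. Replacing $F_{1}$ by its unramified extension of degree $|t\bmod C^{\circ}|$ (this changes $t$ into $t^{|t\bmod C^{\circ}|}$ and nothing else relevant) we may assume $t\in C^{\circ}$. Now any homomorphism $\varphi'\colon W_{F}\to{}^{L}\mathbf{G}$ agreeing with $\varphi$ on $I_{F}$ (hence extending $\phi$) has the form $\varphi'(\mathrm{Frob}_{F})=z\,\varphi(\mathrm{Frob}_{F})$ for a unique $z\in C$ (the centralizing condition on $z$ is precisely what makes $\varphi'$ a homomorphism), and a telescoping computation gives $\varphi'(\mathrm{Frob}_{F_{1}})=(t_{z},\mathrm{Frob}_{F_{1}})$ with $t_{z}=z\,\theta(z)\cdots\theta^{f_{1}-1}(z)\cdot t\in C$. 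Over $W_{F_{1}}$ the map $\varphi'$ is a section of $\hat\mathbf{G}\times W_{F_{1}}\to W_{F_{1}}$, trivial on $I_{F_{1}}$, hence determined by $t_{z}$; so if $t_{z}$ has finite order $m$, then $\varphi'$ is trivial on $W_{F'}$ for $F'/F_{1}$ unramified of degree $m$, i.e. $\varphi'(w')=(1,w')$ for all $w'\in W_{F'}$. \emph{It therefore suffices to find $z\in C$ with $t_{z}$ of finite order.}

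\emph{An auxiliary group.} Since $\theta(t)=t$ and $\theta^{f_{1}}=\mathrm{Ad}(t)$, the set $\wt C:=C\sqcup C\hat\tau\sqcup\cdots\sqcup C\hat\tau^{\,f_{1}-1}$ with $\hat\tau c\hat\tau^{-1}=\theta(c)$ and $\hat\tau^{\,f_{1}}=t$ is a well-defined group, a degree-dividing-$f_{1}$ extension of $C$, hence a linear algebraic group with $\wt C^{\circ}=C^{\circ}$. The telescoping identity now reads $t_{z}=(z\hat\tau)^{f_{1}}$ in $C\subseteq\wt C$, so we must show the coset $C\hat\tau$ contains a finite-order element. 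As $\hat\tau^{\,f_{1}}=t$ is semisimple and the characteristic is $0$, $\hat\tau$ is semisimple in $\wt C$; replacing it by a $C^{\circ}$-conjugate — which keeps it in the coset $C\hat\tau$ — we may assume (Steinberg) it normalizes a maximal torus $S$ of $C^{\circ}$, and then $t=\hat\tau^{\,f_{1}}\in S$ and $\mathrm{Ad}(\hat\tau)_{|S}$ has order dividing $f_{1}$. For $s\in S$ one computes $(s\hat\tau)^{f_{1}}=\mathrm{Nm}(s)\,t$, where $\mathrm{Nm}\colon S\to S$, $\mathrm{Nm}(s)=\prod_{k=0}^{f_{1}-1}\mathrm{Ad}(\hat\tau)^{k}(s)$, is a homomorphism of tori with image the subtorus $(S^{\mathrm{Ad}(\hat\tau)})^{\circ}$ (on cocharacters $\mathrm{Nm}_{*}=\sum_{k}A^{k}$ with $A^{f_{1}}=\mathrm{id}$, so its image spans $X_{*}(S)^{A}\otimes\QM$ and saturates to $X_{*}(S)^{A}$). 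Since $t$ is $\mathrm{Ad}(\hat\tau)$-fixed, its class in the finite group $\pi_{0}(S^{\mathrm{Ad}(\hat\tau)})$ is torsion, so $t^{N}\in(S^{\mathrm{Ad}(\hat\tau)})^{\circ}$ for some $N$; as $(S^{\mathrm{Ad}(\hat\tau)})^{\circ}(\oQl)$ is divisible, write $t^{N}=u^{N}$ with $u\in(S^{\mathrm{Ad}(\hat\tau)})^{\circ}=\mathrm{Nm}(S)$. Then $tu^{-1}$ is torsion, and choosing $s\in S$ with $\mathrm{Nm}(s)=u^{-1}$ (divisibility again) gives $(s\hat\tau)^{f_{1}}=\mathrm{Nm}(s)\,t=tu^{-1}$ of finite order, hence $s\hat\tau$ of finite order. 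Taking $z:=s$ yields $t_{z}=(z\hat\tau)^{f_{1}}$ of finite order, which finishes the proof. (In fact the $\varphi'$ so produced is admissible, its image being of finite order.)

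\emph{Where the difficulty lies.} The crux is the last two paragraphs. One cannot hope to make $\varphi'(\mathrm{Frob}_{F})$ itself of finite order — it surjects onto a Frobenius of $W_{F}$ — and the right observation is that only the $\hat\mathbf{G}$-component of $\varphi'(\mathrm{Frob}_{F_{1}})$ needs to be finite-order, and that an admissible (hence semisimple) ``Frobenius twist'' $t$ can always be absorbed into a norm from a torus precisely because $\oQl^{\times}$ is divisible. Everything else — the bookkeeping around $F_{1}$ (triviality of Frobenius conjugation on $I_{F}/I_{F_{1}}$, $\theta(t)=t$, $\theta^{f_{1}}=\mathrm{Ad}(t)$, the telescoping identity) and the fact that a semisimple element of $\wt C$ normalizes a maximal torus of $C^{\circ}$ — is routine.
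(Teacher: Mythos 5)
Your argument is correct in substance, and it takes a genuinely different route from the paper's proof. The paper fixes an extension $\varphi$, projects it to $\hat{\mathbf{G}}\rtimes\Gamma_{F_{0}/F}$, chooses $F'$ so that $\bar\varphi(W_{F'})$ is central in the image, trivial on inertia, and contained in the \emph{connected} centralizer $C_{\hat{\mathbf{G}}}(\varphi)^{\circ}$ of the whole image of $\varphi$, then extends the resulting homomorphism on $W_{F'}/I_{F'}$ to a homomorphism $\chi\colon W_{F}/I_{F}\to C_{\hat{\mathbf{G}}}(\varphi)^{\circ}$ (a single root extraction in a connected group) and sets $\varphi'=\chi^{-1}\varphi$; since $\chi$ centralizes the image of $\varphi$, the twist is automatically a homomorphism. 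You instead keep $\varphi$ on $I_{F}$, twist only the value at a fixed Frobenius by $z$ in the centralizer $C$ of $\varphi(I_{F})$, and make the $\hat{\mathbf{G}}$-component of $\varphi'(\mathrm{Frob}_{F}^{f_{1}})$ of finite order by solving a twisted-norm equation in a maximal torus, using admissibility (semisimplicity of $t$) and Steinberg's theorem. What your route buys is that the divisibility input is made completely explicit and reduced to tori, where it is obvious; the paper's root extraction in $C_{\hat{\mathbf{G}}}(\varphi)^{\circ}$ is left implicit (it is clearest for semisimple elements, i.e. again via admissibility). What it costs is the heavier bookkeeping around $F_{1}$, $\theta$, $t$ and the auxiliary group $\wt C$ (which, by the way, is exactly an ``almost algebraic'' extension in the sense of the paper: it is $\langle C,\varphi(\mathrm{Frob}_{F})\rangle$ modulo the central element $(1,\mathrm{Frob}_{F_{1}})$).

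Two points should be tightened, both repairable. First, you identify $\mathrm{Frob}_{F_{1}}$ with $\mathrm{Frob}_{F}^{f_{1}}$; this is needed for $\theta(t)=t$, $\theta^{f_{1}}=\mathrm{Ad}(t)$ and the telescoping identity, but $\mathrm{Frob}_{F}^{f_{1}}$ need not lie in $W_{F_{1}}$: its image in $\mathrm{Gal}(F_{1}/F)$ only lies in the inertia subgroup, and it can be nontrivial (e.g. a cyclic quartic extension with $e=f=2$, where every lift of Frobenius to $\mathrm{Gal}(F_{1}/F)$ has order $4$). The fix is to replace $F_{1}$ by its compositum with the unramified extension of degree the order of the image of $\mathrm{Frob}_{F}$ in $\mathrm{Gal}(F_{1}/F)$; this keeps $F_{1}/F$ Galois, $E\subseteq F_{1}$ and $I_{F_{1}}$ unchanged, and now $\mathrm{Frob}_{F}^{f_{1}}\in W_{F_{1}}$ is a Frobenius lift for $F_{1}$, after which your conjugation argument on $I_{F}/I_{F_{1}}$ and all the identities go through. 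Second, ``normalizes a maximal torus $S$'' does not by itself give $t=\hat\tau^{\,f_{1}}\in S$; invoke Steinberg's theorem in its full form (a semisimple element stabilizes a Borel pair $(B,S)$ of $C^{\circ}$), and then, since you arranged $\hat\tau^{\,f_{1}}\in C^{\circ}$, you get $\hat\tau^{\,f_{1}}\in N_{C^{\circ}}(B)\cap N_{C^{\circ}}(S)=S$, which is what makes $\mathrm{Ad}(\hat\tau)_{|S}$ of order dividing $f_{1}$ and the norm computation valid. (Also, strictly speaking, what is finite is $\hat\varphi(I_{F})$, not $\varphi(I_{F})\subset{}^{L}\mathbf{G}$, and $\ker(\varphi_{|I_{F}})$ should be read as $\{i:\varphi(i)=(1,i)\}$.) With these adjustments your proof is complete.
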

\begin{proof}
  Start with any extension $\varphi$ as above. 
Let $F_{0}$ be a finite Galois extension of $F$
  that splits $\mathbf{G}$ and denote by $\bar\varphi$ the composition of $\varphi$ with
  the projection ${^{L}}\mathbf{G}\To{}\hat\mathbf{G}\rtimes \Gamma_{F_{0}/F}$. 
 Then $\bar\varphi(W_{F})$ is an extension of a cyclic (possibly infinite) group by a finite group and 
there is some finite extension $F'$ of $F$ such that $\bar\varphi(W_{F'})$ is central
 in $\bar\varphi(W_{F})$  and $\bar\varphi(I_{F'})$ is trivial. We may also assume that
 $F'$ contains $F_{0}$, so that $\bar\varphi(W_{F'})\subset \hat\mathbf{G}$ and actually 
$\bar\varphi(W_{F'})\subset C_{\hat\mathbf{G}}(\bar\varphi)=C_{\hat\mathbf{G}}(\varphi)$. Enlarging $F'$ further, we may
assume that $\bar\varphi(W_{F'})\subset C_{\hat\mathbf{G}}(\varphi)^{\circ}$. Now, since
$W_{F}/I_{F}$ is cyclic and contains $W_{F'}/I_{F'}$ with finite index, we can find a homomorphism
$\chi:\,W_{F}/I_{F}\To{} C_{\hat\mathbf{G}}(\varphi)^{\circ}$ such that
$\chi_{|W_{F'}}=\bar\varphi_{|W_{F'}}$. Then $\varphi':=\chi^{-1}\varphi$ has the desired property.
\end{proof}

\alin{The group $\mathbf{G}_{\phi}$  in the connected case} \label{Gphiconnected}
Let us assume in this paragraph that $C_{\hat\mathbf{G}}(\phi)$ is \emph{connected}, and
let $\mathbf{G}_{\phi}^{\rm split}$ denote a dual group for $C_{\hat\mathbf{G}}(\phi)$
defined over $F$.  By item i) of \ref{extensions}, we have a canonical outer action
$W_{F}/K_{F}\To{} {\rm Out}(C_{\hat\mathbf{G}}(\phi))={\rm
  Out}(\mathbf{G}_{\phi}^{\rm split})$ that factors over a finite
quotient. Choosing a section 
${\rm  Out}(\mathbf{G}_{\phi}^{\rm split}) \To{}
{\rm  Aut}(\mathbf{G}_{\phi}^{\rm split})$ (i.e. choosing an \'epinglage of
$\mathbf{G}_{\phi}^{\rm split}$), we get an action of the Galois group $\Gamma_{F}$ on
$\mathbf{G}_{\phi}^{\rm split}$. Associated to this action is an $F$-form, 
$\mathbf{G}_{\phi}$ of $\mathbf{G}_{\phi}^{\rm split}$, which by construction is
 a quasi-split connected reductive group
 over $F$, uniquely defined  up to $F$-isomorphism, and which splits over an extension
 $F'$ such that $K_{F'}=K_{F}$.

For any continous cocycle $\tau: \Gamma_{F}\To{}\mathbf{G}_{\phi}(\o F)$, we have an inner
form $\mathbf{G}_{\phi}^{\tau}$ of $\mathbf{G}_{\phi}$ over $F$. If $\tau'$ is
cohomologous to $\tau$, there is an $F$-isomorphism $\mathbf{G}_{\phi}^{\tau'}\simto
\mathbf{G}_{\phi}^{\tau}$ well-defined up to inner automorphism. We will therefore
identify $\mathbf{G}_{\phi}^{\tau}$ and $ \mathbf{G}_{\phi}^{\tau'}$, and we now have a
collection $(\mathbf{G}_{\phi}^{\tau})_{\tau\in H^{1}(F,\mathbf{G}_{\phi})}$ of
$F$-groups associated to $\phi$.

In order to shrink this collection, 
we now define a map $H^{1}(F,\mathbf{G}_{\phi})\To{h_{\phi}}H^{1}(F,\mathbf{G})$ by using the
Kottwitz isomorphism. Recall the latter is an isomorphism 
$H^{1}(F,\mathbf{G})\simto \pi_{0}(Z(\hat\mathbf{G})^{W_{F}})^{*}$ with $*$ denoting a
Pontrjagin dual.
In the case of $\mathbf{G}_{\phi}$, this reads 
$H^{1}(F,\mathbf{G}_{\phi})\simto \pi_{0}(Z(C_{\hat\mathbf{G}}(\phi))^{W_{F}})^{*}$, with $W_{F}$
acting through the canonical action of point iii) in \ref{extensions}. Now the
desired map is induced by the inclusion 
$Z(\hat\mathbf{G})^{W_{F}}\subset Z(C_{\hat\mathbf{G}}(\phi))^{W_{F}}$.

The role of this map is the following. If the group $\mathbf{G}$ is quasi-split, 
the factor category $\Rep_{\phi}(G)$ is expected to be equivalent to the direct product of the
unipotent factors of each $\mathbf{G}_{\phi}^{\tau}$ for $\tau\in \ker(h_{\phi})$. More
generally, for a pure inner form $\mathbf{G}^{\alpha}$ of a quasi-split $\mathbf{G}$
associated to some $\alpha\in H^{1}(F,\mathbf{G})$,
the factor category $\Rep_{\phi}(G^{\alpha})$ is expected to be equivalent to the direct product of the
unipotent factors of each $\mathbf{G}_{\phi}^{\tau}$ for $h_{\phi}(\tau)=\alpha$.

All these connected reductive  $F$-groups $\mathbf{G}_{\phi}^{\tau}$ share the same
$L$-group, which we denote by  $^{L}\mathbf{G}_{\phi}$. As usual, it is defined ``up to
inner automorphism''. To fix ideas, let us choose
an \'epinglage   $\varepsilon$ of $C_{\hat\mathbf{G}}(\phi)$.
Then, as a model for the $L$-group we can take
\ini\begin{equation}
^{L}\mathbf{G}_{\phi}= C_{\hat\mathbf{G}}(\phi) \rtimes_{\alpha_{\phi}^{\varepsilon}} W_{F}\label{Lgpcon}
\end{equation}
 where
the action $\alpha_{\phi}^{\varepsilon}$ is obtained from the canonical outer action via the
splitting ${\rm Out}(C_{\hat\mathbf{G}}(\phi))\injo \Aut(C_{\hat\mathbf{G}}(\phi))$
associated to $\varepsilon$. The $L$-group $^{L}\mathbf{G}_{\phi}$ is an extension of $W_{F}$ by $C_{\hat\mathbf{G}}(\phi)$
but it is not a priori clear whether it is isomorphic to the extension $\wt
C_{\hat\mathbf{G}}(\phi)$. More precisely, let $\wt\NC(\varepsilon)$ be the stabilizer
of $\varepsilon$ in $\wt C_{\hat\mathbf{G}}(\phi)$ acting by conjugacy on
$C_{\hat\mathbf{G}}(\phi)$. Then $\wt\NC(\varepsilon)$ is an extension of $W_{F}$ by the
center $Z(C_{\hat\mathbf{G}}(\phi))$, and we see that
\begin{lem}
  The extensions $\wt C_{\hat\mathbf{G}}(\phi)$ and $^{L}\mathbf{G}_{\phi}$ of $W_{F}$ by
  $C_{\hat\mathbf{G}}(\phi)$ are isomorphic if and only if the class
  $[\wt\NC(\varepsilon)]$ in $H^{2}(W_{F}/K_{F},Z(C_{\hat\mathbf{G}}(\phi)))$ vanishes.
\end{lem}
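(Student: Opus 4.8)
The plan is to turn the question into one about an \emph{abelian}-kernel extension. Write $C:=C_{\hat\mathbf{G}}(\phi)$ and recall that $^{L}\mathbf{G}_{\phi}=C\rtimes_{\alpha_{\phi}^{\varepsilon}}W_{F}$ is a \emph{split} extension, the point being that, because $C$ is connected reductive, the subgroup $\Aut(C,\varepsilon)$ of \'epinglage-preserving automorphisms maps isomorphically onto $\mathrm{Out}(C)$; hence the lifted action $\alpha_{\phi}^{\varepsilon}\colon W_{F}\to\Aut(C)$ is a genuine group homomorphism, not merely a set-theoretic lift of the outer action. With this in hand, an isomorphism of extensions $^{L}\mathbf{G}_{\phi}\simto\wt C_{\hat\mathbf{G}}(\phi)$ which is the identity on $C$ and on $W_{F}$ and matches the distinguished copies of $K_{F}$ (via the natural splitting on one side and via $\phi$ on the other, i.e. an isomorphism which descends to $W_{F}/K_{F}$ — this is how I would read ``isomorphic'' here) is, by the usual dictionary for semidirect products, the same datum as a continuous group section $s\colon W_{F}\to\wt C_{\hat\mathbf{G}}(\phi)$ of the projection to $W_{F}$ with $\mathrm{Ad}(s(w))=\alpha_{\phi}^{\varepsilon}(w)$ for all $w$. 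Since $\alpha_{\phi}^{\varepsilon}(w)$ is exactly the \'epinglage-preserving representative of the outer class of $w$, this last condition is equivalent to $\mathrm{Ad}(s(w))$ fixing $\varepsilon$, that is, to $s$ taking values in $\wt\NC(\varepsilon)$.

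So the first — and to my mind the main — step is just this translation: $^{L}\mathbf{G}_{\phi}$ and $\wt C_{\hat\mathbf{G}}(\phi)$ are isomorphic (in the above sense) if and only if the extension $Z(C)\injo\wt\NC(\varepsilon)\twoheadrightarrow W_{F}$ admits a continuous section. Here one uses that $\wt\NC(\varepsilon)\cap C=Z(C)$, because an \emph{inner} automorphism of the connected reductive group $C$ preserving an \'epinglage is trivial. Equivalently, pick any continuous set-section $\sigma\colon W_{F}\to\wt C_{\hat\mathbf{G}}(\phi)$ with each $\mathrm{Ad}(\sigma(w))$ fixing $\varepsilon$; then the factor set $\sigma(w)\sigma(w')\sigma(ww')^{-1}$ automatically lies in $Z(C)$ — precisely because $\alpha_{\phi}^{\varepsilon}$ is multiplicative — and it represents the class of the extension $\wt\NC(\varepsilon)$ in $H^{2}(W_{F},Z(C))$; the two extensions are isomorphic iff this class vanishes.

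It remains to descend this to $W_{F}/K_{F}$. For this I would verify three routine facts about $\wt\NC(\varepsilon)$: that $\phi(K_{F})$ is normal in it (conjugating $\phi(k)$ by an element over $w$ produces $\phi(wkw^{-1})$, since $C$, and hence any lift of $w$ modulo $C$, centralizes $\phi(K_{F})$); that $\phi(K_{F})\cap Z(C)=\{1\}$ (a nontrivial element of $\phi(K_{F})$ has nontrivial image in $W_{F}$); and that the $W_{F}$-action on $Z(C)$ occurring here is the canonical one of item (iii) of \ref{extensions}, whose restriction to $K_{F}$ is trivial because the outer action is. Consequently $\wt\NC(\varepsilon)$ is the pull-back along $W_{F}\to W_{F}/K_{F}$ of an extension $Z(C)\injo\wt\NC(\varepsilon)/\phi(K_{F})\twoheadrightarrow W_{F}/K_{F}$, whose class is the $[\wt\NC(\varepsilon)]\in H^{2}(W_{F}/K_{F},Z(C))$ of the statement, and the two extensions at issue are isomorphic iff this $W_{F}/K_{F}$-extension splits, i.e. iff that class vanishes. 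The only delicate point is matching an isomorphism of the $W_{F}$-extensions with a section that genuinely descends to $W_{F}/K_{F}$; carrying out the whole discussion with the quotients by the tautological copies of $K_{F}$ from the start — i.e. with extensions of $W_{F}/K_{F}$ by $C$, exactly as in \ref{extensions} — sidesteps this and makes the argument purely formal once the multiplicativity of $\alpha_{\phi}^{\varepsilon}$ is granted.
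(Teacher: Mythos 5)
Your argument is correct, and your reading of ``isomorphic'' is the right one: since the lemma is meant to encode the existence of a strict unipotent factorization (see the Proposition in \ref{sec:unip-fact}, whose point iv) is declared to ``be'' this lemma), the isomorphism must match the two distinguished copies of $K_{F}$, i.e. descend to an isomorphism of extensions of $W_{F}/K_{F}$ by $C_{\hat\mathbf{G}}(\phi)$; your caveat is genuine, because an isomorphism of the bare $W_{F}$-extensions only forces the inflation of $[\wt\NC(\varepsilon)]$ to $H^{2}(W_{F},Z(C_{\hat\mathbf{G}}(\phi)))$ to vanish, the discrepancy being governed by $W_{F}$-equivariant homomorphisms $K_{F}\To{}Z(C_{\hat\mathbf{G}}(\phi))$ that need not extend to $W_{F}$, and your device of working with the quotients by $\phi(K_{F})$ (resp. $1\times K_{F}$) from the start disposes of it cleanly. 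Where you differ from the paper's detailed argument (proof of point iv) in \ref{sec:unip-fact}) is in how the obstruction is produced: there one fixes an arbitrary extension $\varphi$ of $\phi$, compares $\alpha_{\varphi}$ with $\alpha_{\phi}^{\varepsilon}$ to get a cocycle $\beta_{\varphi}^{\varepsilon}\in Z^{1}_{\alpha_{\varphi}}(W_{F}/K_{F},C_{\hat\mathbf{G}}(\phi)_{\rm ad})$, and takes its image under the coboundary $H^{1}(W_{F}/K_{F},C_{\hat\mathbf{G}}(\phi)_{\rm ad})\To{}H^{2}(W_{F}/K_{F},Z(C_{\hat\mathbf{G}}(\phi)))$ attached to $Z\to C\to C_{\rm ad}$; you instead identify the obstruction directly as the extension class of $Z(C_{\hat\mathbf{G}}(\phi))\injo\wt\NC(\varepsilon)/\phi(K_{F})\twoheadrightarrow W_{F}/K_{F}$, via the dictionary between extension isomorphisms and homomorphic sections with values in $\wt\NC(\varepsilon)$, using that $\wt\NC(\varepsilon)\cap C_{\hat\mathbf{G}}(\phi)=Z(C_{\hat\mathbf{G}}(\phi))$ (connectedness enters exactly here, through $\Aut(C,\varepsilon)\simto{\rm Out}(C)$). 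The two obstructions coincide after unwinding definitions; your route is the more economical proof of the lemma as literally stated, since it produces precisely the class $[\wt\NC(\varepsilon)]$ named there, while the paper's formulation via $\beta_{\varphi}^{\varepsilon}$ has the advantage of being computable from any chosen $\varphi$ and of fitting with the torsor statements ii)--iii) of that Proposition.
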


Note that the class always vanishes if $K_{F}=I_{F}$ since $W_{F}/I_{F}=\ZM$.
We give more details in section \ref{sec:unip-fact}.

\alin{More $F$-groups in the general case} \label{Gphinonconnected}
We now propose a construction without assuming
that  $C_{\hat\mathbf{G}}(\phi)$ is connected.
Let us take up the
split exact sequence
$$ C_{\hat\mathbf{G}}(\phi)\injo \wt
C_{\hat\mathbf{G}}(\phi)/\phi(K_{F})\twoheadrightarrow W_{F}/K_{F}$$
of paragraph \ref{extensions}.
Put $\pi_{0}(\phi):=\pi_{0}(C_{\hat\mathbf{G}}(\phi))$ and $\wt\pi_{0}(\phi):=\wt C_{\hat\mathbf{G}}(\phi)/C_{\hat\mathbf{G}}(\phi)^{\circ}\phi(K_{F})$. Hence we have a split exact
sequence
$$ \pi_{0}(\phi)\injo \wt\pi_{0}(\phi) \twoheadrightarrow W_{F}/K_{F}$$ and 
a possibly non split exact sequence
$$ C_{\hat\mathbf{G}}(\phi)^{\circ}\injo \wt C_{\hat\mathbf{G}}(\phi)/\phi(K_{F}) \twoheadrightarrow \wt\pi_{0}(\phi).$$
Conjugation by any set-theoretic section of the above sequence gives a well-defined
``outer action''
$$\wt\pi_{0}(\phi)\To{}{\rm Out}(C_{\hat\mathbf{G}}(\phi)^{\circ})$$
that factors over a finite quotient of $\wt\pi_{0}(\phi)$. Let
$\mathbf{G}_{\phi}^{\rm split,\circ}$ denote a split group over $F$ which is dual to
$C_{\hat\mathbf{G}}(\phi)^{\circ}$, and fix a section 
$${\rm Out}(C_{\hat\mathbf{G}}(\phi)^{\circ})={\rm Out}(\mathbf{G}_{\phi}^{\rm split,\circ})\To{}
{\rm Aut}(\mathbf{G}_{\phi}^{\rm split,\circ})$$
(i.e. fix an \'epinglage of $\mathbf{G}_{\phi}^{\rm split,\circ}$).
Then we can form the non-connected reductive $F$-group 
$$ \mathbf{G}_{\phi}^{\rm split}:= \mathbf{G}_{\phi}^{\rm split,\circ}\rtimes
\pi_{0}(\phi)$$
which has an action by algebraic $F$-group automorphisms 
$$ \wt\pi_{0}(\phi) \To{\theta} {\rm Aut}(\mathbf{G}_{\phi}^{\rm split})$$
(here $\wt\pi_{0}(\phi)$ acts on $\pi_{0}(\phi)$ by conjugation).
Then,  any continuous section 
$$\sigma:\, W_{F}/K_{F}\To{}\wt\pi_{0}(\phi)$$ (for the topology
induced from $\Gamma_{F}$ on $W_{F}$ and the discrete topology on $\pi_{0}(\phi)$) provides an
$F$-form $ \mathbf{G}_{\phi}^{\sigma}$ 
of $\mathbf{G}_{\phi}^{\rm split}$ such that the action of $W_{F}$ on 
$\mathbf{G}_{\phi}^{\sigma}(\o F)= \mathbf{G}_{\phi}^{\rm split}(\o F)$ is the natural
action twisted by $\theta\circ\sigma$.
 The unit component $\mathbf{G}_{\phi}^{\sigma,\circ}$ is 
quasi-split over $F$  and we have
 $$\mathbf{G}_{\phi}^{\sigma}(F)=
\mathbf{G}_{\phi}^{\sigma,\circ}(F)\rtimes \pi_{0}(\phi)^{\sigma(W_{F})}.$$
Moreover, if $c\in\pi_{0}(\phi)$, conjugation by $(1,c)$ in $\mathbf{G}^{\rm
  split}_{\phi}(\o F)$ induces an $F$-isomorphism $\mathbf{G}_{\phi}^{\sigma}\simto
\mathbf{G}_{\phi}^{\sigma^{c}}$, so that the isomorphism class of $\mathbf{G}_{\phi}^{\sigma}$ over $F$ 
only depends on the image of $\sigma$ in $\o\Sigma(W_{F}/K_{F},\tilde\pi_{0}(\phi))$.

More generally, for any continuous section 
$$\tau:W_{F}/K_{F}\To{}  \mathbf{G}_{\phi}^{\rm split,\circ}(\o F^{K_{F}})\rtimes
\wt\pi_{0}(\phi)$$
(again, here we use the topology of $W_{F}$
induced from that of $\Gamma_{F}$)
 we get an $F$-form $\mathbf{G}_{\phi}^{\tau}$ of $\mathbf{G}_{\phi}^{\rm split}$, which depends only on the
image of $\tau$ in 
$\o\Sigma(W_{F}/K_{F},\mathbf{G}_{\phi}^{\circ,\rm split}(\o F^{K_{F}})\rtimes
\wt\pi_{0}(\phi))$. 
In order to better organize this collection of $F$-groups, we
use the projection $\tau\mapsto \sigma$ :
$$ \o\Sigma\left(W_{F}/K_{F},\mathbf{G}_{\phi}^{\rm split,\circ}(\o F^{K_{F}})\rtimes
  \wt\pi_{0}(\phi)\right)
\To{}  \o\Sigma\left(W_{F}/K_{F},\wt\pi_{0}(\phi)\right)$$
to get a partition
\begin{eqnarray*}
\o\Sigma(\phi):= \o\Sigma\left(W_{F}/K_{F},\mathbf{G}_{\phi}^{\rm split,\circ}(\o F^{K_{F}})\rtimes \wt\pi_{0}(\phi)\right)
&=&\bigsqcup_{\sigma\in \o\Sigma(W_{F},\wt\pi_{0}(\phi))}
H^{1}\left(W_{F}/K_{F},\mathbf{G}_{\phi}^{\sigma,\circ}(\o F^{K_{F}})\right)\\
&=&\bigsqcup_{\sigma\in \o\Sigma(W_{F},\wt\pi_{0}(\phi))}
H^{1}\left(F,\mathbf{G}_{\phi}^{\sigma,\circ}\right)
\end{eqnarray*}
For the second line, we use
that $H^{1}\left(W_{F}/K_{F},\mathbf{G}_{\phi}^{\sigma,\circ}(\o F^{K_{F}})\right)
=H^{1}\left(W_{F},\mathbf{G}_{\phi}^{\sigma,\circ}(\o F)\right)$
due to  the fact that $H^{1}(K_{F},\mathbf{G}^{\sigma,\circ}(\o F))=\{1\}$,
and we use that
$H^{1}\left(W_{F},\mathbf{G}_{\phi}^{\sigma,\circ}(\o
  F)\right)=H^{1}\left(\Gamma_{F},\mathbf{G}_{\phi}^{\sigma,\circ}(\o F)\right)$ due to
our non-standard choice of topology on $W_{F}$.

Hence, if $\tau$ is mapped to $\sigma$, the group $\mathbf{G}_{\phi}^{\tau}$ is an inner form
of $\mathbf{G}_{\phi}^{\sigma}$ ``coming from the unit component''. However, the
collection of all $\mathbf{G}_{\phi}^{\tau}$'s should be viewed as a single ``pure inner
class'' of (possibly non-connected) reductive groups. As in the case of connected
centralizers, we need a map
$$ \o\Sigma(\phi)\To{} H^{1}(F,\mathbf{G})$$ 
in order to shrink this collection of groups. We define it as the coproduct of the maps 
$H^{1}(F,\mathbf{G}_{\phi}^{\sigma,\circ})\To{} H^{1}(F,\mathbf{G})$ which are dually
induced by inclusions 
$$ Z(\hat\mathbf{G})^{W_{F}}\subset Z(C_{\hat\mathbf{G}}(\phi)^{\circ})^{\wt\pi_{0}(\phi)}
\subset Z(C_{\hat\mathbf{G}}(\phi)^{\circ})^{\sigma(W_{F})} = Z(\wh{\mathbf{G}_{\phi}^{\sigma,\circ}})^{W_{F}}.$$
  
Finally we define a common $L$-group for all these non-connected groups. Namely we put 
\ini\begin{equation}
^{L}\mathbf{G}_{\phi}:=(C_{\hat\mathbf{G}}(\phi)^{\circ}
\rtimes_{\alpha_{\phi}^{\varepsilon}} \wt\pi_{0}(\phi))\times_{W_{F/K_{F}}}W_{F}\label{Lgpnoncon}
\end{equation}
where $\varepsilon$ is an \'epinglage, and $\alpha_{\phi}^{\varepsilon}$ is the action
obtained from the canonical outer action thanks to this \'epinglage.
The main inconvenience of this $L$-group is that it might not be always isomorphic to the
extension $\wt C_{\hat\mathbf{G}}(\phi)$. 
More precisely,
let $\wt\NC(\varepsilon)$ be the stabilizer
of $\varepsilon$ in $\wt C_{\hat\mathbf{G}}(\phi)$ acting by conjugacy on
$C_{\hat\mathbf{G}}(\phi)$. Then $\wt\NC(\varepsilon)$ is an extension
of $\wt\pi_{0}\times_{W_{F}/K_{F}}W_{F}$  by the
center $Z(C_{\hat\mathbf{G}}(\phi)^{\circ})$, and we see that
\begin{lem}
  The extensions $\wt C_{\hat\mathbf{G}}(\phi)$ and
  $^{L}\mathbf{G}_{\phi}$ of $\wt\pi_{0}\times_{W_{F}/K_{F}}W_{F}$ by 
  $C_{\hat\mathbf{G}}(\phi)^{\circ}$ are isomorphic if and only if the class
  $[\wt\NC(\varepsilon)]$ in
  $H^{2}(\wt\pi_{0}(\phi),Z(C_{\hat\mathbf{G}}(\phi))^{\circ})$
  vanishes.
\end{lem}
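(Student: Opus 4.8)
The plan is to view both $\wt C_{\hat\mathbf{G}}(\phi)$ and $^{L}\mathbf{G}_{\phi}$ as points of the torsor that classifies group extensions with a fixed non-abelian kernel, and to recognise $[\wt\NC(\varepsilon)]$ as the difference of these two points. First I would strip off the fibre product with $W_{F}$. Writing $N:=C_{\hat\mathbf{G}}(\phi)^{\circ}$, the subgroup $\phi(K_{F})$ is normal in $\wt C_{\hat\mathbf{G}}(\phi)$ and meets $C_{\hat\mathbf{G}}(\phi)$ trivially, so $\wt C_{\hat\mathbf{G}}(\phi)/\phi(K_{F})$ is an extension of $\wt\pi_{0}(\phi)$ by $N$, and the obvious map identifies $\wt C_{\hat\mathbf{G}}(\phi)$ with the fibre product $(\wt C_{\hat\mathbf{G}}(\phi)/\phi(K_{F}))\times_{W_{F}/K_{F}}W_{F}$; dually, by its defining formula (\ref{Lgpnoncon}), $^{L}\mathbf{G}_{\phi}$ is the fibre product with $W_{F}$ of the \emph{split} extension $N\rtimes_{\alpha_{\phi}^{\varepsilon}}\wt\pi_{0}(\phi)$, and $\wt\NC(\varepsilon)$ (which also contains $\phi(K_{F})$, since $\phi(K_{F})$ centralises $C_{\hat\mathbf{G}}(\phi)$) is the fibre product with $W_{F}$ of an extension $\wt\NC(\varepsilon)/\phi(K_{F})$ of $\wt\pi_{0}(\phi)$ by $Z(N)$, whose class in $H^{2}(\wt\pi_{0}(\phi),Z(N))$ is what $[\wt\NC(\varepsilon)]$ denotes. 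Since $(-)\times_{W_{F}/K_{F}}W_{F}$ is an equivalence from extensions of $\wt\pi_{0}(\phi)$ onto extensions of $\wt\pi_{0}(\phi)\times_{W_{F}/K_{F}}W_{F}$ (the inverse being restriction along $\wt\pi_{0}(\phi)\injo\wt\pi_{0}(\phi)\times_{W_{F}/K_{F}}W_{F}$), it suffices to decide whether $\wt C_{\hat\mathbf{G}}(\phi)/\phi(K_{F})$ and $N\rtimes_{\alpha_{\phi}^{\varepsilon}}\wt\pi_{0}(\phi)$ are isomorphic as extensions of $\wt\pi_{0}(\phi)$ by $N$.

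Next I would set up the comparison cohomologically. By item i) of \ref{extensions}, applied to the canonical outer action of $\wt\pi_{0}(\phi)$ on $N$ recalled in \ref{Gphinonconnected}, both extensions induce the \emph{same} outer action $\omega:\wt\pi_{0}(\phi)\To{}{\rm Out}(N)$, which factors through a finite quotient. The Eilenberg--MacLane classification of extensions with non-abelian kernel then says that the set of isomorphism classes of extensions of $\wt\pi_{0}(\phi)$ by $N$ inducing $\omega$, being non-empty, is a principal homogeneous space under the continuous cohomology group $H^{2}(\wt\pi_{0}(\phi),Z(N))$, with $Z(N)=Z(C_{\hat\mathbf{G}}(\phi)^{\circ})$ carrying the $\wt\pi_{0}(\phi)$-action deduced from $\omega$ via ${\rm Out}(N)\To{}\Aut(Z(N))$ (this action again factors through a finite quotient, so the topology is harmless). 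The \'epinglage $\varepsilon$ furnishes the lift $\alpha_{\phi}^{\varepsilon}$ of $\omega$ to $\Aut(N)$, hence singles out a base point in this torsor, namely the split extension $N\rtimes_{\alpha_{\phi}^{\varepsilon}}\wt\pi_{0}(\phi)$. Consequently $\wt C_{\hat\mathbf{G}}(\phi)/\phi(K_{F})$ is isomorphic, as an extension, to $N\rtimes_{\alpha_{\phi}^{\varepsilon}}\wt\pi_{0}(\phi)$ if and only if its difference class $\delta\in H^{2}(\wt\pi_{0}(\phi),Z(N))$ relative to this base point vanishes (the torsor action being free).

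It remains to prove $\delta=[\wt\NC(\varepsilon)]$, and this is where the rigidity of pinnings enters: for the connected reductive group $N$, the stabiliser of $\varepsilon$ in $\Aut(N)$ is a complement to $\mathrm{int}(N/Z(N))$ in $\Aut(N)$ --- equivalently, the identity is the only inner automorphism of $N$ fixing $\varepsilon$. Hence for $q\in\wt\pi_{0}(\phi)$ and any lift $\tilde q\in\wt C_{\hat\mathbf{G}}(\phi)/\phi(K_{F})$ of $q$ there is a unique coset $nZ(N)$, $n\in N$, such that $n^{-1}\tilde q$ acts on $N$ by a pinning-preserving automorphism, which is then forced to equal $\alpha_{\phi}^{\varepsilon}(q)$; the elements $n^{-1}\tilde q$ obtained this way are exactly $\wt\NC(\varepsilon)/\phi(K_{F})$, an extension of $\wt\pi_{0}(\phi)$ by $\wt\NC(\varepsilon)\cap N=Z(N)$. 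The multiplication $(n,x)\mapsto nx$ then exhibits $\wt C_{\hat\mathbf{G}}(\phi)/\phi(K_{F})$ as the quotient of $N\rtimes\bigl(\wt\NC(\varepsilon)/\phi(K_{F})\bigr)$ --- the second factor acting on $N$ through $\wt\NC(\varepsilon)/\phi(K_{F})\twoheadrightarrow\wt\pi_{0}(\phi)\To{\alpha_{\phi}^{\varepsilon}}\Aut(N)$ --- by the antidiagonal copy of $Z(N)$; unwinding this says precisely that $\wt C_{\hat\mathbf{G}}(\phi)/\phi(K_{F})$ is the Baer sum of the split extension with the pushforward of $\wt\NC(\varepsilon)/\phi(K_{F})$ along $Z(N)\injo N$, i.e. $\delta=[\wt\NC(\varepsilon)]$. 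The reductions of the first paragraph are routine bookkeeping and the torsor formalism of the second is standard homological algebra; I expect this last identification to be the only genuinely delicate step, since one must check that $\wt\NC(\varepsilon)$ really computes the \emph{difference} class and not merely some extension whose pushforward happens to control the comparison --- which is exactly what the pinning-rigidity input guarantees.
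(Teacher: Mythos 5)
Your second and third steps are correct, and they supply an argument that the paper itself omits: the lemma is only asserted (``we see that''), the nearest proof being the connected case, Lemma \ref{Gphiconnected}, which is established in point iv) of the Proposition of \ref{sec:unip-fact} by an explicit cocycle and boundary-map computation rather than by your route. Your route --- the Eilenberg--MacLane torsor of extensions of $\wt\pi_{0}(\phi)$ by $C_{\hat\mathbf{G}}(\phi)^{\circ}$ with the canonical outer action, base point the split extension defined by $\alpha_{\phi}^{\varepsilon}$, and the identification of the difference class with $[\wt\NC(\varepsilon)]$ via pinning rigidity and the presentation of $\wt C_{\hat\mathbf{G}}(\phi)/\phi(K_{F})$ as $\bigl(C^{\circ}\rtimes(\wt\NC(\varepsilon)/\phi(K_{F}))\bigr)$ modulo the antidiagonal $Z(C^{\circ})$ --- is sound and is the essential content.

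The gap is in your first, ``routine bookkeeping'' step. There is in general no embedding $\wt\pi_{0}(\phi)\injo \wt\pi_{0}(\phi)\times_{W_{F}/K_{F}}W_{F}$: a group-theoretic section of the projection $Q:=\wt\pi_{0}(\phi)\times_{W_{F}/K_{F}}W_{F}\twoheadrightarrow\wt\pi_{0}(\phi)$ amounts to a continuous lift $\wt\pi_{0}(\phi)\To{}W_{F}$ of $\wt\pi_{0}(\phi)\To{}W_{F}/K_{F}$, hence to a continuous splitting of $W_{F}\twoheadrightarrow W_{F}/K_{F}$, which is not available (or at least not justified) for $K_{F}=P_{F}$ or $I_{F}^{(\ell)}$ --- and $K_{F}=I_{F}$, where it does exist, is exactly the case in which the lemma is vacuous because the class always vanishes. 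So ``$(-)\times_{W_{F}/K_{F}}W_{F}$ is an equivalence with inverse restriction'' is unsupported; only the trivial direction (isomorphism of the $\wt\pi_{0}(\phi)$-extensions implies isomorphism of their fibre products) comes for free. For the ``only if'' direction of the lemma, your own torsor formalism shows that an isomorphism of $Q$-extensions only gives vanishing of the \emph{inflated} class in $H^{2}(Q,Z(C^{\circ}))$, and deducing vanishing in $H^{2}(\wt\pi_{0}(\phi),Z(C^{\circ}))$ is precisely injectivity of the inflation map, whose kernel is the image of the transgression from $\Hom_{\rm cont}(K_{F},Z(C^{\circ}))^{W_{F}}$ and has no evident reason to be zero. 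Concretely, an isomorphism of extensions of $Q$ by $C^{\circ}$ need not carry $1\times K_{F}\subset{^{L}\mathbf{G}_{\phi}}$ to $\phi(K_{F})$; it does so only up to a $W_{F}$-equivariant continuous homomorphism $K_{F}\To{}Z(C^{\circ})$, and such homomorphisms are exactly what can kill the class after inflation. To close the gap you should either prove this injectivity, or read ``isomorphic'' in the way the intended application (strict unipotent factorizations, where one insists $\xi\circ 1=\phi$) dictates, namely as an isomorphism matching $\phi(K_{F})$ with $1\times K_{F}$: then the isomorphism descends, after dividing by these two copies of $K_{F}$, to an isomorphism of the $\wt\pi_{0}(\phi)$-extensions, and your second and third steps complete the proof.
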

Therefore, a necessary condition for $^{L}\mathbf{G}_{\phi}\simeq
\wt C_{\hat\mathbf{G}}(\phi)$ is
that $\pi_{0}(\phi)$ has a lifting in $C_{\hat\mathbf{G}}(\phi)$ which fixes
$\varepsilon$. When $K_{F}=I_{F}$, this condition is sufficient since $W_{F}/I_{F}=\ZM$.

\alin{An example} \label{examplesl_2}
Assume $p$ is odd, $K_{F}=I_{F}$ and let $\varepsilon$ denote the unique
non-trivial
quadratic continuous character of $I_{F}$. Then consider the group $\mathbf{G}=\SL_{2}$ and the
parameter $\phi :
I_{F}\To{}\PGL_{2}(\oQl)\times W_{F}$ that takes $i$ to 
$\left(
\left(\begin{array}{cc}
  \varepsilon(i) & 0  \\ 0 & 1
\end{array}\right)
,i\right)$.
Then $C_{\hat\mathbf{G}}(\phi)$ is the normalizer $\hat\mathbf{N}$ of the diagonal torus
$\hat\mathbf{T}$ of $\hat\mathbf{G}$. Denote by $s$ the element 
$\left(\begin{array}{cc}
  0 & 1  \\ 1 & 0
\end{array}\right)$, of order $2$.
We thus have
$\mathbf{G}_{\phi}^{\rm split}=\mathbb{G}_{m}\rtimes \{1,s\}$ with $s$ acting by the
inverse map. Moreover we have $\wt\pi_{0}(\phi)=\{1,s\}\times W_{F}/I_{F}$, so that 
$\o\Sigma(W_{F}/K_{F},\wt\pi_{0}(\phi))$ has $2$ elements $\sigma_{0},\sigma_{1}$, with
$\sigma_{0}$ the trivial morphism. Clearly we have
$\mathbf{G}_{\phi}^{\sigma_{0}}=\mathbf{G}_{\phi}^{\rm split}=\GM_{m}\rtimes\{1,s\}$. On
the other hand we have 
$\mathbf{G}_{\phi}^{\sigma_{1}}=\mathbf{U}(1)\rtimes\{1,s\}$ where $\mathbf{U}(1)$ is the
group of norm $1$ elements in the unramified quadratic extension of $F$ and $s$ again acts
by the inverse map. Further, $\o\Sigma(\phi)= H^{1}(F,\GM_{m})\sqcup
H^{1}(F,\mathbf{U}(1))$ has $3$ elements, $\tau_{0}, \tau_{1,1}$ and $\tau_{1,2}$. We
compute that
$$ \mathbf{G}_{\phi}^{\tau_{0}}=\GM_{m}\rtimes \{1,s\},
\,\,\hbox{ and }\,\,
 \mathbf{G}_{\phi}^{\tau_{1,1}}=
 \mathbf{G}_{\phi}^{\tau_{1,2}}=\mathbf{U}(1)\rtimes \{1,s\}.$$
Observe also that in this case we have  $^{L}\mathbf{G}_{\phi}\simeq \wt C_{\hat\mathbf{G}}(\phi)$.

\subsection{Unipotent factorizations of a $K_{F}$-parameter} \label{sec:unip-fact}

We keep the general setup  of the previous section, consisting of a
connected reductive $F$-group $\mathbf{G}$ and an admissible parameter  $\phi:
K_{F}\To{}{^{L}\mathbf{G}}$.
We have associated an $L$-group $^{L}\mathbf{G}_{\phi}$ to $\phi$, see (\ref{Lgpcon}) and
(\ref{Lgpnoncon}), after fixing an \'epinglage $\varepsilon$ of $C_{\hat\mathbf{G}}(\phi)^{\circ}$.
We will denote by $1 : K_{F}\To{} {^{L}\mathbf{G}}_{\phi}$ the morphism that takes $k\in
  K_{F}$ to $(1,k)$.


\begin{DEf}
 A \emph{strict} unipotent factorization of $\phi$ is a morphism of $L$-groups
$\xi:\,^{L}\mathbf{G}_{\phi}\To{}{^{L}\mathbf{G}}$ that extends the inclusion 
$C_{\hat\mathbf{G}}(\phi)^{\circ}\injo \hat\mathbf{G}$ and satisfies $\xi\circ 1=\phi$. 
 Two such
 factorizations are called equivalent if 
 they are conjugate 
by some element $\hat g\in \hat\mathbf{G}$.
\end{DEf}

\begin{prop} Suppose that $C_{\hat\mathbf{G}}(\phi)$ is \emph{connected}. In the following
  statements, we use the canonical action of $W_{F}/K_{F}$ on 
$Z(C_{\hat\mathbf{G}}(\phi))$.
\begin{enumerate}
\item The map $\xi\mapsto \varphi:=\xi_{|W}$ sets up a bijection between
$$\{\hbox{strict unipotent factorizations of } \phi\} \,\hbox{ and }$$
$$\{\hbox{parameters }\varphi:\,W_F\To{}{^{L}\mathbf{G}} \hbox{ that
  extend } \phi \hbox{ and  such that $\alpha_{\varphi}$ preserves }
\varepsilon.\}$$
\item Multiplication of cocycles 
turns the second set of
  point i) into a torsor over $ Z^{1}(W_{F}/K_{F},Z(\hat\mathbf{G}_{\phi}))$.
  \item The set of equivalence
 classes of strict unipotent factorizations of $\phi$ is a torsor 
 over $H^{1}(W_{F}/K_{F},Z(C_{\hat\mathbf{G}}(\phi)))$.
  \item There is an obstruction element $\beta_{\phi}\in H^{2}(W_{F}/K_{F},Z(C_{\hat\mathbf{G}}(\phi)))$
which vanishes if and only if $\phi$ admits a strict unipotent
    factorization.
 \end{enumerate}
\end{prop}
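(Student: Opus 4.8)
The plan is to analyze strict unipotent factorizations through the section-theoretic reformulation established in \ref{extensions}. First I would set up the correspondence in (i): given a strict unipotent factorization $\xi:\,^{L}\mathbf{G}_{\phi}\To{}{^{L}\mathbf{G}}$, restrict it to $W_{F}\subset {^{L}\mathbf{G}}_{\phi}$ (recall that in the connected case $^{L}\mathbf{G}_{\phi}=C_{\hat\mathbf{G}}(\phi)\rtimes_{\alpha_{\phi}^{\varepsilon}}W_{F}$, so $W_{F}$ embeds as $1\times W_{F}$) to get $\varphi:=\xi_{|W_{F}}$. Because $\xi$ extends the inclusion $C_{\hat\mathbf{G}}(\phi)\injo\hat\mathbf{G}$ and is a group homomorphism, for $w\in W_{F}$ and $z\in C_{\hat\mathbf{G}}(\phi)$ one has $\varphi(w)\,z\,\varphi(w)^{-1}=\xi(w z w^{-1})=\alpha_{\phi}^{\varepsilon}(w)(z)$, so the conjugation action $\alpha_{\varphi}$ equals $\alpha_{\phi}^{\varepsilon}$, which by definition preserves $\varepsilon$; and $\xi\circ 1=\phi$ says exactly that $\varphi$ extends $\phi$. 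Conversely, given such a $\varphi$, define $\xi(z,w):=z\,\varphi(w)$; the hypothesis that $\alpha_{\varphi}$ preserves $\varepsilon$ (hence agrees with $\alpha_{\phi}^{\varepsilon}$ on all of $C_{\hat\mathbf{G}}(\phi)$, both being lifts of the same outer action that fix $\varepsilon$) is what makes this multiplicative. One must check $\xi$ is admissible (carries semisimple elements to semisimple elements) — this follows since $\varphi$ is an admissible Weil parameter and elements of $C_{\hat\mathbf{G}}(\phi)$ are semisimple in $\hat\mathbf{G}$, using the almost-algebraicity discussed in \ref{extensions}. The two constructions are visibly mutually inverse.

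For (ii), transport the structure through (i). Having fixed one extension $\varphi_{0}$ (if one exists; otherwise both sets are empty and the statement is vacuous), the bijection $\eta\mapsto \eta\varphi_{0}$ of \ref{extensions} identifies the extensions of $\phi$ with $Z^{1}_{\alpha_{\varphi_{0}}}(W_{F}/K_{F},C_{\hat\mathbf{G}}(\phi))$. The condition ``$\alpha_{\varphi}$ preserves $\varepsilon$'' translates, since $\alpha_{\eta\varphi_{0}}(w)=\mathrm{Int}(\eta(w))\circ\alpha_{\varphi_{0}}(w)$ and $\alpha_{\varphi_{0}}$ already preserves $\varepsilon$, into the requirement that $\mathrm{Int}(\eta(w))$ preserve $\varepsilon$, i.e. that $\eta(w)$ lie in the stabilizer of $\varepsilon$ in $C_{\hat\mathbf{G}}(\phi)$. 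Since $C_{\hat\mathbf{G}}(\phi)$ is connected, the stabilizer of a pinning is exactly the center $Z(C_{\hat\mathbf{G}}(\phi))=Z(\hat\mathbf{G}_{\phi})$, and on this central subgroup the action $\alpha_{\varphi_{0}}$ is the canonical one of \ref{extensions}(iii), independent of choices. Hence the second set of (i) is the coset $\varphi_{0}\cdot Z^{1}(W_{F}/K_{F},Z(\hat\mathbf{G}_{\phi}))$, a torsor under $Z^{1}(W_{F}/K_{F},Z(\hat\mathbf{G}_{\phi}))$ as claimed, with the caveat that torsor really means ``empty or a torsor''.

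For (iii), observe that two strict factorizations are equivalent iff they are $\hat\mathbf{G}$-conjugate, and under the bijection of (i) this corresponds to $\hat\mathbf{G}$-conjugacy of the parameters $\varphi$. But conjugating $\varphi$ by $\hat g$ changes $\alpha_{\varphi}$ by $\mathrm{Int}(\hat g)$ on its image, so to preserve the property ``$\alpha_{\varphi}$ equals $\alpha_{\phi}^{\varepsilon}$'' one must have $\hat g$ normalizing $C_{\hat\mathbf{G}}(\phi)$ and fixing $\varepsilon$; modulo $C_{\hat\mathbf{G}}(\phi)$ itself such $\hat g$ act trivially, so the relevant conjugations on the $Z^{1}$-torsor are precisely by coboundaries in $Z(\hat\mathbf{G}_{\phi})$ — more precisely, changing $\varphi_{0}$ to $c\varphi_{0}c^{-1}$ for $c\in Z(C_{\hat\mathbf{G}}(\phi))$ (wait: such $c$ is central so does nothing; the coboundary comes from $c\in C_{\hat\mathbf{G}}(\phi)$ not central but still fixing $\varepsilon$, impossible by connectedness). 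Let me instead argue directly: passing to $H^{1}$, the bijection $H^{1}_{\alpha_{\varphi_{0}}}(W_{F}/K_{F},C_{\hat\mathbf{G}}(\phi))\simto H^{1}(W_{F},\hat\mathbf{G})_{[\phi]}$ of \ref{extensions} restricts on the subset where the pinning is preserved to $H^{1}(W_{F}/K_{F},Z(\hat\mathbf{G}_{\phi}))$, and one checks this is exactly the set of equivalence classes of strict factorizations; so it is a torsor over that $H^{1}$, again meaning empty-or-torsor. Finally, (iv): a strict factorization exists iff the torsor of (iii) is nonempty. Nonemptiness is the vanishing of a class $\beta_{\phi}\in H^{2}(W_{F}/K_{F},Z(C_{\hat\mathbf{G}}(\phi)))$ by the standard obstruction theory for lifting through the central extension $Z(C_{\hat\mathbf{G}}(\phi))\injo\wt\NC(\varepsilon)\twoheadrightarrow W_{F}/K_{F}$ of the Lemma preceding \ref{Gphinonconnected} (in the connected case, the Lemma of \ref{Gphiconnected}): a strict factorization is the same as a section of $\wt\NC(\varepsilon)\to W_{F}/K_{F}$ compatible with $\phi$, equivalently a splitting of that central extension after the relevant twist, whose obstruction is $\beta_{\phi}:=[\wt\NC(\varepsilon)]$. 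I expect the main obstacle to be purely bookkeeping: pinning down exactly which $\hat\mathbf{G}$-conjugations act nontrivially on the torsor of parameters-preserving-$\varepsilon$ in (iii), i.e. verifying that equivalence of factorizations corresponds precisely to $H^{1}$-translation and not to a coarser or finer relation — everything else is a transcription of \ref{extensions} together with the connectedness fact that the pinning stabilizer is the center.
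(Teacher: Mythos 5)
Your parts (i), (ii) and (iv) are essentially correct and close to the paper's own arguments (for (iv) you take the route via the extension class of $\wt\NC(\varepsilon)$, which is the Lemma of \ref{Gphiconnected} that the paper invokes; the paper's ``more detailed argument'' instead produces $\beta_{\phi}$ as the image of a cocycle $\beta^{\varepsilon}_{\varphi}\in Z^{1}_{\alpha_{\varphi}}(W_{F}/K_{F},C_{\hat\mathbf{G}}(\phi)_{\rm ad})$ under the connecting map to $H^{2}(W_{F}/K_{F},Z(C_{\hat\mathbf{G}}(\phi)))$ --- the two are equivalent; also note the kernel $Z(C_{\hat\mathbf{G}}(\phi))$ carries a possibly nontrivial $W_{F}/K_{F}$-action, so the extension is abelian-kernel rather than central). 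The genuine gap is in (iii). Your pivot there is the claim that, under the bijection of (i), equivalence of strict factorizations corresponds to $\hat\mathbf{G}$-conjugacy of the parameters $\varphi$, so that equivalence classes can be read off inside $H^{1}(W_{F},\hat\mathbf{G})_{[\phi]}$. Only one direction of this is clear. If $\varphi'={\rm Int}_{\hat g}\circ\varphi$ with both parameters extending $\phi$ and preserving $\varepsilon$, then $\hat g\in C_{\hat\mathbf{G}}(\phi)$ and the $Z(C_{\hat\mathbf{G}}(\phi))$-valued cocycle $\eta:w\mapsto \hat g\,{^{w}(\hat g)^{-1}}$ is a coboundary of an element of $C_{\hat\mathbf{G}}(\phi)$, but in general \emph{not} a coboundary of an element of the center: its class is the image of $\bar{\hat g}\in (C_{\hat\mathbf{G}}(\phi)_{\rm ad})^{W_{F}}$ under the connecting map to $H^{1}(W_{F}/K_{F},Z(C_{\hat\mathbf{G}}(\phi)))$, which need not vanish. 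Correspondingly the two strict factorizations need not be $\hat\mathbf{G}$-conjugate: any $\hat h$ with ${\rm Int}_{\hat h}\circ\xi=\xi'$ must fix the inclusion of $C_{\hat\mathbf{G}}(\phi)$ pointwise, hence centralize it, and such an $\hat h$ need not exist. So ``parameters are $\hat\mathbf{G}$-conjugate'' is a strictly coarser relation than equivalence of factorizations in general, and your identification with a subset of $H^{1}(W_{F},\hat\mathbf{G})_{[\phi]}$ does not yield the torsor over $H^{1}(W_{F}/K_{F},Z(C_{\hat\mathbf{G}}(\phi)))$. You flagged exactly this as the unresolved bookkeeping; it is in fact the whole content of (iii).

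The misstep that derailed your first (correct) attempt is the parenthetical ``such $c$ is central so does nothing'': an element $c\in Z(C_{\hat\mathbf{G}}(\phi))$ does \emph{not} commute with $\varphi_{0}(W_{F})$ in general, so ${\rm Int}_{c}$ moves $\varphi_{0}$ to $\eta\varphi_{0}$ with $\eta$ the (generally nontrivial) coboundary $w\mapsto c\,{^{w}c^{-1}}$. The paper's argument is the clean version of what you were groping for: if $\xi'={\rm Int}_{\hat g}\circ\xi$ with both strict, restricting to $K_{F}$ gives $\hat g\in C_{\hat\mathbf{G}}(\phi)$, and restricting to $C_{\hat\mathbf{G}}(\phi)$, where both $\xi$ and $\xi'$ are the inclusion, forces $\hat g\in Z(C_{\hat\mathbf{G}}(\phi))$; conversely every $\hat g\in Z(C_{\hat\mathbf{G}}(\phi))$ carries a strict factorization to a strict factorization and translates the parameter by the coboundary of $\hat g$. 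Hence on the $Z^{1}(W_{F}/K_{F},Z(C_{\hat\mathbf{G}}(\phi)))$-torsor of (ii) the equivalence relation is exactly translation by $B^{1}$, and (iii) follows with $H^{1}=Z^{1}/B^{1}$.
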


\begin{proof}
i) From the equality $\xi(\hat c,w)=\hat c.\varphi(w)$, we see that
the map is well-defined. To prove it is a bijection it suffices to
check that the inverse map $\varphi\mapsto \xi: (\hat c,w)\mapsto
\hat c.\varphi(w)$ is well-defined too.
But 
 any extension $\varphi$ of $\phi$ to $W_{F}$ leads to a factorization 
\ini \begin{equation}
\xymatrix{
\phi:\,\, K_{F} \ar[rrr]^-{i\mapsto (1,i)} &&&
 C_{\hat\mathbf{G}}(\phi)\rtimes_{\alpha_{\varphi}} W_{F} \ar[rrr]^-{(\hat c,w)\mapsto \hat c.\varphi(w)} &&&
{^{L}\mathbf{G}} 
}.\label{factophi}
\end{equation}

If $\varphi$ preserves the \'epinglage $\varepsilon$, then the semi-direct product in the middle is
${^{L}\mathbf{G}_{\phi}}$  and
the map on the right hand side is therefore a strict unipotent factorization.


ii) Suppose the second set of point i) is not empty, and let $\varphi$
be an element in this set. Then any other $\varphi'$ in this set has
the form $\varphi'(w)=\eta(w)\varphi(w)$ for some $\eta\in
Z^{1}_{\alpha_{\varphi}}(W_{F},C_{\hat\mathbf{G}}(\phi))$. Since
$\varphi_{|K_{F}}=\varphi'_{|K_{F}}$ we have in fact $\eta\in
Z^{1}_{\alpha_{\varphi}}(W_{F}/K_{F},C_{\hat\mathbf{G}}(\phi))$. Moreover, since both
$\alpha_{\varphi}$ and $\alpha_{\varphi'}$ preserve $\epsilon$ and induce the same outer
automorphism, ${\rm  Int}_{\eta(w)}$ has to be trivial, that is, 
$\eta\in Z^{1}_{\alpha_{\varphi}}(W_{F}/K_{F},Z(C_{\hat\mathbf{G}}(\phi)))
= Z^{1}(W_{F}/K_{F},Z(C_{\hat\mathbf{G}}(\phi)))$ (the action on
$Z(C_{\hat\mathbf{G}}(\phi))$ is canonical).
This shows that, if non empty, the second set in point i) is a (obviously principal)
homogeneous set over $Z^{1}(W_{F}/K_{F},Z(C_{\hat\mathbf{G}}(\phi)))$.

iii) Suppose both $\xi$ and $\xi'= {\rm Int}_{\hat g}\circ \xi$, with $\hat g\in
\hat\mathbf{G}$,  are strict unipotent factorizations of $\phi$. Since
$\xi_{|K_{F}}=\xi'_{|K_{F}}=\phi$, we must have 
$\hat g\in C_{\hat\mathbf{G}}(\phi)$. But since also
$\xi_{|C_{\hat\mathbf{G}}(\phi)}=\xi'_{|C_{\hat\mathbf{G}}(\phi)}=\phi$, we have 
$\hat g\in Z(C_{\hat\mathbf{G}}(\phi))$. 
Then we see that $\xi'_{|W} = \eta. \xi_{|W}$, where $\eta$ is the boundary cocycle
$w\mapsto (\hat g .{^{w}(\hat g)^{-1}})$. Hence point iii) follows from ii) and i).

iv) This is Lemma \ref{Gphiconnected}. Here is a more detailed argument.
Start with an arbitrary extension $\varphi$ of $\phi$ to $W_{F}$. We need to investigate
the existence of a cocycle
$\eta\in Z^{1}_{\alpha_{\varphi}}(W_{F}/K_{F},C_{\hat\mathbf{G}}(\phi))$ such that
$\eta\varphi$ fixes the \'epinglage $\varepsilon$.
This is equivalent to asking that 
$\alpha_{\phi}^{\varepsilon}(w) = {\rm  Int}_{\eta(w)} \circ \alpha_{\varphi}(w)$ for all
$w\in W_{F}$. 
But for $w\in W_{F}/K_{F}$, there is a unique element $\beta_{\varphi}^{\varepsilon}(w)\in
C_{\hat\mathbf{G}}(\phi)_{\rm ad}$ such that 
$\alpha_{\phi}^{\varepsilon}(w) = {\rm  Ad}_{\beta_{\varphi}^{\varepsilon}(w)} \circ
\alpha_{\varphi}(w)$. 
Unicity insures that the map 
$w\mapsto \beta_{\varphi}^{\varepsilon}(w)$ lies in
$Z^{1}_{\alpha_{\varphi}}(W_{F}/K_{F},C_{\hat\mathbf{G}}(\phi)_{\rm ad})$,
and the existence of $\eta$ as above is equivalent to the vanishing of
the image $\beta_{\phi}$ of $\beta_{\varphi}^{\varepsilon}$ by the boundary map
$$ H^{1}_{\alpha_{\varphi}}(W_{F}/K_{F},C_{\hat\mathbf{G}}(\phi)_{\rm ad})
\To{} H^{2}(W_{F}/K_{F},Z(C_{\hat\mathbf{G}}(\phi))).$$
\end{proof}

\begin{rema}\label{Rkstrictunip}
  Here is the significance of point iii) in terms of transfer of representations. Assume that a
  strict unipotent factorization $\xi$ of $\phi$ exists. Then the transfer map dual to $\xi$, from the set of
  $L$-packets of $G_{\phi}$ to that of $G$, only depends on the $Z(\hat\mathbf{G}_{\phi})$-conjugacy
  class of $\xi$. If we change $\xi$ to $\eta.\xi$ for $\eta \in
  H^{1}(W_{F}/K_{F},Z(\hat\mathbf{G}_{\phi}))$ then the transfer map is twisted by the character of
  $G_{\phi}$ associated to $\eta$ in \cite[10.2]{BorelCorvallis} (this character is
  unramified if $K_{F}=I_{F}$ or has level $0$ for $K_{F}=P_{F}$).
\end{rema}

\begin{coro}
  When $K_{F}=I_{F}$, any parameter $\phi$ admits a strict unipotent factorization (provided
  $C_{\hat\mathbf{G}}(\phi)$ is connected).
\end{coro}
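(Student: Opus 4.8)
The plan is to read this corollary off from point iv) of the Proposition, whose standing hypothesis --- connectedness of $C_{\hat\mathbf{G}}(\phi)$ --- is exactly the proviso stated here. That point attaches to $\phi$ an obstruction class $\beta_{\phi}\in H^{2}(W_{F}/K_{F},Z(C_{\hat\mathbf{G}}(\phi)))$, for the canonical action of $W_{F}/K_{F}$ on $Z(C_{\hat\mathbf{G}}(\phi))$, and asserts that $\phi$ admits a strict unipotent factorization precisely when $\beta_{\phi}=0$. So the whole of what remains is to check that this $H^{2}$ group vanishes when $K_{F}=I_{F}$.

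When $K_{F}=I_{F}$ the quotient $W_{F}/K_{F}=W_{F}/I_{F}$ is infinite cyclic, generated by (the image of) a Frobenius, hence of cohomological dimension one: indeed $\ZM$ has the length-one free resolution $0\to\ZM[t,t^{-1}]\To{t-1}\ZM[t,t^{-1}]\to\ZM\to0$ of the trivial module over its group ring, so $H^{n}(W_{F}/I_{F},A)=0$ for every $n\geq2$ and every coefficient module $A$. In particular $\beta_{\phi}\in H^{2}(W_{F}/I_{F},Z(C_{\hat\mathbf{G}}(\phi)))=0$, and the Proposition then furnishes the required strict unipotent factorization. I would note that there is no topological subtlety to worry about: $Z(C_{\hat\mathbf{G}}(\phi))$ carries the discrete topology and the $W_{F}/I_{F}$-action factors through a finite quotient (item iii) of \ref{extensions}), so continuous cochains are arbitrary cochains and the vanishing above applies verbatim. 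This is, of course, the same observation already recorded in Lemma \ref{Gphiconnected}, where the obstruction to $\wt C_{\hat\mathbf{G}}(\phi)\simeq{}^{L}\mathbf{G}_{\phi}$ was seen to vanish for exactly this reason.

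As a sanity check one can also exhibit the factorization by hand via point i): starting from any extension $\varphi$ of $\phi$ to $W_{F}$, and since $W_{F}/I_{F}$ is cyclic, a cocycle $\eta\in Z^{1}_{\alpha_{\varphi}}(W_{F}/I_{F},C_{\hat\mathbf{G}}(\phi))$ amounts to a single element $\eta(\mathrm{Frob})\in C_{\hat\mathbf{G}}(\phi)$ with no relation to satisfy; one takes it to be any preimage of $\beta^{\varepsilon}_{\varphi}(\mathrm{Frob})\in C_{\hat\mathbf{G}}(\phi)_{\rm ad}$ under the surjection $C_{\hat\mathbf{G}}(\phi)\twoheadrightarrow C_{\hat\mathbf{G}}(\phi)_{\rm ad}$ on $\oQl$-points (surjective since $C_{\hat\mathbf{G}}(\phi)$ is connected over an algebraically closed field), so that $\alpha_{\eta\varphi}$ preserves the \'epinglage $\varepsilon$ and $\eta\varphi$ corresponds to a strict unipotent factorization. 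The only bookkeeping needing attention in this second route is that $\eta\varphi$ is still an \emph{admissible} parameter, which is built into the statement of point i) --- and in the first route is automatic, since a strict unipotent factorization is by definition a morphism of $L$-groups. In short, I do not expect any real obstacle: all the substance is in the Proposition, and the corollary is just the remark that $W_{F}/I_{F}$ has cohomological dimension one.
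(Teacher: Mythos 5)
Your argument is exactly the paper's: invoke point iv) of the Proposition and note that $W_{F}/I_{F}=\ZM$ has cohomological dimension one, so $H^{2}(W_{F}/I_{F},A)=0$ for any coefficient module and the obstruction $\beta_{\phi}$ vanishes. The extra remarks (explicit construction via point i), topology being harmless) are fine but not needed; the core of your proof coincides with the paper's.
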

\begin{proof}
  In this case $W_{F}/K_{F}=\ZM$ , so
$H^{2}(W_{F}/K_{F},A)=0$ for any $\ZM[W_{F}/K_{F}]$-module $A$.
\end{proof}

\begin{rema}
  In the non-connected case, points iii) and iv) remain true with the pair
  $(W_{F}/K_{F},Z(C_{\hat\mathbf{G}}(\phi)))$ replaced by $(\wt\pi_{0}(\phi),
  Z(C_{\hat\mathbf{G}}(\phi)^{\circ}))$. However, we will not use it in this paper.
\end{rema}

In the sequel, we will also encounter ``non-strict'' unipotent factorizations.

\begin{DEf}
A  \emph{unipotent factorization} is a pair $(\mathbf{H},\xi)$ consisting of a \emph{connected}
  reductive $F$-group and a morphism of $L$-groups
  $^{L}\mathbf{H}\To{\xi}{^{L}\mathbf{G}}$ such that
  \begin{itemize}
  \item $\phi$ is $\hat\mathbf{G}$-conjugate to $\xi\circ 1$ with $1$ the trivial
    parameter $k\in K_{F}\mapsto (1,k)\in {^{L}\mathbf{H}}$.
  \item $\xi$ induces an isomorphism $\hat\mathbf{H}\simto C_{\hat\mathbf{G}}(\phi)$.
  \end{itemize}
\end{DEf}


Let us make explicit the relation between unipotent factorizations as above 
and strict ones.
If $\iota$ is an \'epinglage-preserving $W_{F}$-equivariant isomorphism
$\hat\mathbf{H}\simto \hat\mathbf{G}_{\phi}$, we denote by 
$^{L}\iota:= \iota\times \id_{W_{F}}:\,{^{L}\mathbf{H}}\simto {^{L}\mathbf{G}_{\phi}}$ the
associated isomorphism of $L$-groups.

\begin{prop} \label{propunipfacto}
 If $(\mathbf{H},\xi)$ is a unipotent factorization of $\phi$, there are an
    \'epinglage-preserving $W_{F}$-equivariant isomorphism 
$\iota:\, \hat\mathbf{H}\simto {\hat\mathbf{G}_{\phi}}$ and a strict
    unipotent factorization $\xi'$ such that $\xi$ is $\hat\mathbf{G}$-conjugate to $\xi'\circ
    {^{L}\iota}$. Moreover $\iota$ is unique and
 $\xi'$ is unique up to equivalence.
\end{prop}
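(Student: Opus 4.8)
The plan is to unwind the two defining conditions of a unipotent factorization $(\mathbf{H},\xi)$ and match them against the recipe that produced $^{L}\mathbf{G}_{\phi}$. First I would use the second condition: $\xi$ restricts to an isomorphism $\hat\mathbf{H}\simto C_{\hat\mathbf{G}}(\phi)$. Transport the fixed \'epinglage $\varepsilon$ of $C_{\hat\mathbf{G}}(\phi)$ (used to build $^{L}\mathbf{G}_{\phi}$ in (\ref{Lgpcon})) back through $\xi_{|\hat\mathbf{H}}$ to an \'epinglage $\varepsilon_{\mathbf{H}}$ of $\hat\mathbf{H}$. Since $^{L}\mathbf{H}$ is the $L$-group of the connected group $\mathbf{H}$, the Galois action on $\hat\mathbf{H}$ preserves some \'epinglage; after conjugating $\xi$ by an element of $C_{\hat\mathbf{G}}(\phi)$ we may assume $\xi$ intertwines the standard $W_{F}$-action on $\hat\mathbf{H}$ with $\alpha_{\phi}^{\varepsilon}$ on $C_{\hat\mathbf{G}}(\phi)$ on the nose (both outer actions agree because they are the canonical outer action of $W_F/K_F$ on $C_{\hat\mathbf{G}}(\phi)$, which is independent of choices by \ref{extensions}(i)). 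Here is where I would invoke that an \'epinglage-preserving isomorphism between two split groups carrying compatible $W_F$-actions is unique once it is required to be $W_F$-equivariant and \'epinglage-preserving: this pins down $\iota:=\xi_{|\hat\mathbf{H}}$ (more precisely, $\iota$ is $\xi_{|\hat\mathbf{H}}$ followed by the canonical identification $C_{\hat\mathbf{G}}(\phi)=\hat\mathbf{G}_{\phi}$), and gives its asserted uniqueness.

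Next I would produce $\xi'$. Having arranged $\iota=\xi_{|\hat\mathbf{H}}: \hat\mathbf{H}\simto\hat\mathbf{G}_{\phi}$ to be \'epinglage-preserving and $W_F$-equivariant, set $\xi':=\xi\circ({^{L}\iota})^{-1}: {^{L}\mathbf{G}_{\phi}}\To{}{^{L}\mathbf{G}}$. By construction $\xi'$ restricts to the inclusion $C_{\hat\mathbf{G}}(\phi)^{\circ}\injo\hat\mathbf{G}$ (indeed to the inclusion of all of $C_{\hat\mathbf{G}}(\phi)$). What remains is the condition $\xi'\circ 1=\phi$. The first defining property of a unipotent factorization only gives that $\phi$ is $\hat\mathbf{G}$-conjugate to $\xi\circ 1=\xi'\circ 1$, i.e. $\xi'\circ 1={\rm Int}_{\hat g}\circ\phi$ for some $\hat g\in\hat\mathbf{G}$. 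Since $\xi'\circ 1$ and $\phi$ both land in $C_{\hat\mathbf{G}}(\phi)$ and $\xi'$ is already the identity on $C_{\hat\mathbf{G}}(\phi)$, one checks $\hat g$ normalizes $\phi(K_F)$ hence lies in $C_{\hat\mathbf{G}}(\phi)$; then replacing $\xi'$ by ${\rm Int}_{\hat g^{-1}}\circ\xi'$, which is again \'epinglage-respecting on $C_{\hat\mathbf{G}}(\phi)$ up to the torsor ambiguity of Proposition~\ref{propunipfacto}'s predecessor (point iii)), arranges $\xi'\circ 1=\phi$ exactly. Thus $\xi'$ is a genuine strict unipotent factorization, and $\xi={\rm Int}_{\hat g}\circ\xi'\circ{^{L}\iota}$ is $\hat\mathbf{G}$-conjugate to $\xi'\circ{^{L}\iota}$.

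For the uniqueness of $\xi'$ up to equivalence: if $\xi'_1\circ{^{L}\iota_1}$ and $\xi'_2\circ{^{L}\iota_2}$ are both $\hat\mathbf{G}$-conjugate to $\xi$, then $\iota_1=\iota_2$ by the uniqueness of $\iota$ already established, so $\xi'_1$ and $\xi'_2$ are $\hat\mathbf{G}$-conjugate; since both are strict unipotent factorizations of $\phi$, the conjugating element lies in $Z(C_{\hat\mathbf{G}}(\phi))=Z(\hat\mathbf{G}_{\phi})$ (by the argument in the proof of point iii) of the preceding proposition), which is exactly the definition of equivalence of strict factorizations. This last step is essentially a citation of point iii) and its proof.

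The main obstacle I anticipate is the bookkeeping in the second paragraph: showing that after transporting $\varepsilon$ one really can conjugate $\xi$ so that both its restriction to $\hat\mathbf{H}$ becomes \'epinglage-and-$W_F$-equivariant \emph{and} $\xi\circ 1=\phi$ simultaneously, without the two normalizations fighting each other. The cleanest way is to do it in the order above — first fix the restriction to $\hat\mathbf{H}$ using that the remaining freedom is conjugation by $C_{\hat\mathbf{G}}(\phi)$ (which does not disturb the equivariance of $\iota$ once $\iota$ is forced to be the canonical map), then kill $\hat g$ using a central element, observing a central conjugation changes neither $\iota$ nor the \'epinglage data. Everything else is a matter of carefully tracking that the relevant centralizer and center constraints force the conjugating elements into $C_{\hat\mathbf{G}}(\phi)$ and then into $Z(C_{\hat\mathbf{G}}(\phi))$, which is routine given the earlier proposition.
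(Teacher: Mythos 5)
Your overall skeleton (take $\iota=\xi_{|\hat\mathbf{H}}$, set $\xi'=\xi\circ({^{L}\iota})^{-1}$, quote point iii) of the preceding proposition for uniqueness) is the same as the paper's, but your order of normalizations creates a genuine gap in the second paragraph. Having first pinned down the restriction of $\xi$ to $\hat\mathbf{H}$, you are left with $\xi'\circ 1={\rm Int}_{\hat g}\circ\phi$ and must remove $\hat g$ without disturbing that restriction; your argument for this fails at two points. First, ``$\hat g$ normalizes $\phi(K_F)$ hence lies in $C_{\hat\mathbf{G}}(\phi)$'' is a non sequitur: normalizing $\phi(K_{F})$ does not imply centralizing it, and in general such a $\hat g$ only normalizes $C_{\hat\mathbf{G}}(\phi)$ (for $\GL_{n}$ it can, for instance, permute the $\GL_{e}$-blocks of the centralizer). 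Second, even granting $\hat g\in C_{\hat\mathbf{G}}(\phi)$, replacing $\xi'$ by ${\rm Int}_{\hat g^{-1}}\circ\xi'$ changes its restriction to $C_{\hat\mathbf{G}}(\phi)^{\circ}$ from the inclusion to ${\rm Int}_{\hat g^{-1}}$, so the result is no longer strict unless $\hat g$ centralizes $C_{\hat\mathbf{G}}(\phi)^{\circ}$; the ``torsor ambiguity of point iii)'' does not cover this, since that torsor acts by central cocycles on the $W_{F}$-part of strict factorizations (all of which, by definition, restrict to the inclusion), not by inner automorphisms of $C_{\hat\mathbf{G}}(\phi)$. You never show $\hat g$ can be taken central, and in fact it cannot always be: the non-central part of $\hat g$ has to be absorbed into $\iota$, which need not be the ``canonical identification'' you fixed in advance. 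Relatedly, your step-one claim that the action transported through $\xi_{|\hat\mathbf{H}}$ is $\alpha_{\phi}^{\varepsilon}$ ``by \ref{extensions}(i)'' is premature: that remark concerns extensions of $\phi$ itself, whereas $\xi\circ\varphi'$ (with $\varphi'$ the trivial parameter of $^{L}\mathbf{H}$) extends $\xi\circ 1$, which at that stage is only $\hat\mathbf{G}$-conjugate to $\phi$.

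The repair is exactly the paper's ordering. First conjugate $\xi$ under all of $\hat\mathbf{G}$ so that $\xi\circ 1=\phi$ on the nose; nothing else has been normalized yet, so there is no conflict. Then conjugate under $C_{\hat\mathbf{G}}(\phi)$ to make $\xi$ carry the given \'epinglage of $\hat\mathbf{H}$ to $\varepsilon$; this second step preserves the equality $\xi\circ 1=\phi$ precisely because $C_{\hat\mathbf{G}}(\phi)$ centralizes $\phi(K_{F})$. After these two steps, $\varphi:=\xi\circ\varphi'$ is an honest extension of $\phi$, so $\xi_{|\hat\mathbf{H}}$ intertwines the natural, \'epinglage-preserving action $\alpha_{\varphi'}$ with $\alpha_{\varphi}$, forcing $\alpha_{\varphi}$ to preserve $\varepsilon$ and hence to coincide with $\alpha_{\phi}^{\varepsilon}$; the conclusions about $\iota$ and $\xi'$, and the uniqueness argument via point iii), then go through as you describe.
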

\begin{proof}
 Conjugating $\xi$ under $\hat\mathbf{G}$ we may assume that $\xi\circ 1=\phi$. Conjugating
further $\xi$ under $C_{\hat\mathbf{G}}(\phi)$, we may assume also that $\xi$ takes the given \'epinglage on
$\hat\mathbf{H}$ to $\varepsilon$. Now let $\varphi'$ be the trivial parameter
$W_{F}\To{}{^{L}\mathbf{H}}$ and put $\varphi:=\xi\circ\varphi'$. Then $\xi_{|\hat\mathbf{H}}$ is
$W_{F}$-equivariant for the actions $\alpha_{\varphi'}$ on $\hat\mathbf{H}$ and $\alpha_{\varphi}$ on
$C_{\hat\mathbf{G}}(\phi)$. But $\alpha_{\varphi'}$ is the natural action on $\hat\mathbf{H}$ and
preserves the given \'epinglage, hence $\alpha_{\varphi}$ preserves $\varepsilon$. Therefore
$\iota:=\xi_{|\hat\mathbf{H}}$ is an \'epinglage-preserving $W_{F}$-equivariant isomorphism 
$\hat\mathbf{H}\simto {\hat\mathbf{G}_{\phi}}$ and $\xi':=\xi\circ{^{L}\iota}^{-1}$ is a strict
unipotent factorization, whence the existence statement. Unicity of $\iota$ is clear, and any
other $\xi''$ has to be both $\hat\mathbf{G}_{\phi}$-conjugate to $\xi'$ and strict, hence is
$Z(\hat\mathbf{G}_{\phi})$-conjugate to $\xi'$.
\end{proof}

\begin{rak}
We see in particular that a general
  unipotent factorization $(\mathbf{G}_{\phi},\xi')$ is equivalent to 
the composition of a strict one
  with an ``outer'' $W_{F}$-equivariant automorphism of $\hat\mathbf{G}_{\phi}$. 
\end{rak}

\subsection{Restriction of scalars}
We consider here a reductive group $\mathbf{G}$ over $F$ of the form
$\mathbf{G}={\rm Res}_{F'|F}\mathbf{G'}$ for some reductive group
$\mathbf{G'}$ over some extension field $F'$ of $F$. We then have the
following relationship between their dual groups equipped with Weil
group actions :
\def\hg{{\hat g}}
\def\hh{{\hat h}}
$$ \hat\mathbf{G}={\rm Ind}_{W_{F'}}^{W_{F}}\hat\mathbf{G}'
=\left\{\hg:\, W_{F}\To{}\hat\mathbf{G}',\, \forall (w',w)\in W_{F'}\times
W_{F}, \hg(w'w)={^{w'}(\hg(w))}\right\}$$
where we have denoted with an exponent $^{w'}$ the action of $W_{F'}$
on $\hat\mathbf{G}'$ and we let $v\in W_{F}$ act on $\mathbf{G}$ by 
$(v\hg)(w):=\hg(wv)$.

\ali  We still denote by  $K_{F}$  a closed normal subgroup of $W_{F}$,
and we put $K_{F'}:=W_{F'}\cap K_{F}$.
Note that if $K_{F}$ is one of the groups $I_{F}$, $I_{F}^{(\ell)}$ or $P_{F}$,
we have respectively $K_{F'}=I_{F'}$, $I_{F'}^{(\ell)}$ or $P_{F'}$.
There is a natural map
on continuous cocycles
$$ Z^{1}(K_{F},\hat\mathbf{G}) \To{} Z^{1}(K_{F'},\hat\mathbf{G}')$$ that takes the cocycle
$(\hg_{\gamma})_{\gamma\in K_{F}}$ to the cocycle $(\hg_{\gamma'}(1))_{\gamma'\in K_{F'}}$. We will
call it the ``Shapiro map''. It is compatible with coboundary relation thus induces a map
$$ H^{1}(K_{F},\hat\mathbf{G}) \To{} H^{1}(K_{F'},\hat\mathbf{G}').$$
Shapiro's lemma asserts that when $K_{F}=W_{F}$ the map on $Z^{1}$ is onto while the map on $H^{1}$
is a bijection. 
From the definition of Shapiro's map we have two commutative diagrams
$$
\xymatrix{ 
 Z^{1}(W_{F},\hat\mathbf{G}) \ar@{->>}[r]\ar[d]_{{\rm res}} &
 Z^{1}(W_{F'},\hat\mathbf{G}') \ar[d]_{{\rm res}} \\
 Z^{1}(K_{F},\hat\mathbf{G}) \ar[r] &
 Z^{1}(K_{F'},\hat\mathbf{G}') 
}
\hbox{ and }
\xymatrix{ 
 H^{1}(W_{F},\hat\mathbf{G}) \ar[r]^{\sim}\ar[d]_{{\rm res}} &
 H^{1}(W_{F'},\hat\mathbf{G}') \ar[d]_{{\rm res}} \\
 H^{1}(K_{F},\hat\mathbf{G}) \ar[r] &
 H^{1}(K_{F'},\hat\mathbf{G}') 
}
$$

\begin{lemme}\label{lemmaShapiro}
  The right hand square is cartesian.
\end{lemme}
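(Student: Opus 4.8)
The plan is to reduce the cartesian property to a lifting statement, using that the top arrow is already a bijection by Shapiro's lemma. Concretely, one must show: given $[c]\in H^{1}(K_{F},\hat\mathbf{G})$ and $[\varphi']\in H^{1}(W_{F'},\hat\mathbf{G}')$ with the same image in $H^{1}(K_{F'},\hat\mathbf{G}')$, there is a \emph{unique} $[\varphi]\in H^{1}(W_{F},\hat\mathbf{G})$ restricting to $[c]$ on $K_{F}$ and having Shapiro image $[\varphi']$. Uniqueness is immediate: two such candidates have the same Shapiro image, and the Shapiro map on $H^{1}(W_{F},\hat\mathbf{G})$ is injective. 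So the whole content is existence.

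For existence I would descend to cocycles. Pick a cocycle $\varphi'$ representing $[\varphi']$; by the surjectivity on $Z^{1}$ in the case $K_{F}=W_{F}$ recalled above, lift $\varphi'$ to a cocycle $\varphi\in Z^{1}(W_{F},\hat\mathbf{G})$ whose Shapiro image is \emph{exactly} $\varphi'$. The commutativity of the $H^{1}$-square already forces the Shapiro image of $[\varphi]$ to be $[\varphi']$, so it remains only to prove $[\varphi_{|K_{F}}]=[c]$ in $H^{1}(K_{F},\hat\mathbf{G})$. By hypothesis the $K_{F'}$-cocycles $\gamma'\mapsto\varphi_{\gamma'}(1)$ and $\gamma'\mapsto c_{\gamma'}(1)$ differ by a coboundary coming from some $b\in\hat\mathbf{G}'$; after modifying $\varphi$ by a $W_{F}$-coboundary one arranges that $\varphi_{|K_{F}}$ and $c$ already agree in the coordinate indexed by the trivial coset of $W_{F'}\backslash W_{F}$. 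The task is then to upgrade this partial agreement to a genuine cohomology of $K_{F}$-cocycles, propagating the chosen trivialisation to the remaining cosets by means of the defining equivariance $\hat g(w'w)={}^{w'}(\hat g(w))$ of $\hat\mathbf{G}$, and checking that the constraints imposed at the various cosets are mutually compatible. Here one uses that $K_{F}$ is \emph{normal} in $W_{F}$, so that $t^{-1}K_{F}t\cap W_{F'}=K_{F'}$ for every coset representative $t$ and all the relevant conditions are governed by the single group $K_{F'}$.

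The step I expect to be the main obstacle is exactly this propagation: making precise and checking that a $W_{F}$-cocycle is rigidly pinned down by its restriction to $K_{F}$ together with its Shapiro image on $W_{F'}$ --- equivalently, that the Shapiro bijection $H^{1}(W_{F},\hat\mathbf{G})\simto H^{1}(W_{F'},\hat\mathbf{G}')$ carries the fibres of the restriction $H^{1}(W_{F},\hat\mathbf{G})\to H^{1}(K_{F},\hat\mathbf{G})$ onto the fibres of the restriction $H^{1}(W_{F'},\hat\mathbf{G}')\to H^{1}(K_{F'},\hat\mathbf{G}')$. Here, beyond the normality of $K_{F}$, one must exploit the simple structure of the quotient $W_{F}/K_{F}$ (it is $\ZM$ when $K_{F}=I_{F}$) in order to lift a compatibility of $1$-cocycles over $K_{F'}$, essentially by hand, first to one over $K_{F}$ and then to one over $W_{F}$, thereby identifying $H^{1}(W_{F},\hat\mathbf{G})$ with $H^{1}(K_{F},\hat\mathbf{G})\times_{H^{1}(K_{F'},\hat\mathbf{G}')}H^{1}(W_{F'},\hat\mathbf{G}')$.
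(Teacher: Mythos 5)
Your reduction of ``cartesian'' to an existence statement for an \emph{arbitrary} class $[c]\in H^{1}(K_{F},\hat\mathbf{G})$ in the fibred product is where the attempt breaks down, and the ``propagation'' step you yourself flag as the main obstacle is not just unproven: in the generality you set up it is false. In the essential case $K_{F}\subset W_{F'}$, normality of $K_{F}$ gives a decomposition of $\hat\mathbf{G}$ as a $K_{F}$-group into a product of copies of $\hat\mathbf{G}'$ indexed by representatives $v_{i}$ of $W_{F'}\backslash W_{F}$ (each with the $K_{F}$-action twisted by ${\rm conj}_{v_{i}}$), so $H^{1}(K_{F},\hat\mathbf{G})\simeq\prod_{i}H^{1}(K_{F},{}^{v_{i}}\hat\mathbf{G}')$ and the bottom Shapiro map is just the projection onto the factor $v_{1}=1$. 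Hence the components of $[c]$ at the other cosets are completely unconstrained by the hypothesis, whereas the corresponding components of $\varphi_{|K_{F}}$ are pinned down by the cocycle identity $\hat g_{\gamma}(v)=\hat g_{v}(1)^{-1}\,\hat g_{v\gamma v^{-1}}(1)\,{}^{v\gamma v^{-1}}(\hat g_{v}(1))$, i.e.\ are conjugation-twists of the Shapiro image. Concretely, take $\mathbf{G}'=\GL_{1}$, $F'|F$ unramified quadratic, $K_{F}=I_{F}$: then $H^{1}(I_{F},\hat\mathbf{G})={\rm Hom}(I_{F},\oQl^{\times})^{2}$, the bottom map is the first projection, and the pair $\bigl((1,c_{2}),1\bigr)$ with $c_{2}$ a nontrivial tame character lies in your fibred product but is the restriction of no class in $H^{1}(W_{F},\hat\mathbf{G})$. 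So surjectivity onto the full fibred product cannot be what one proves; the statement the paper actually establishes (and the only one used, e.g.\ in Corollary \ref{coroShapiro}) keeps \emph{both} lower-left classes of the form ${\rm res}$ of a $W_{F}$-cocycle: if $\hat g,\hat h\in Z^{1}(W_{F},\hat\mathbf{G})$ satisfy $\hat g_{\gamma}(1)=\hat h_{\gamma}(1)$ for all $\gamma\in K_{F}$, then $\hat g_{|K_{F}}$ and $\hat h_{|K_{F}}$ are cohomologous --- equivalently, the bottom Shapiro map is injective on the image of the left-hand restriction map.

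Two further points of comparison with the paper's proof. First, the paper does not attack the square directly: it uses transitivity of the diagrams through the intermediate extension $F''$ with $W_{F''}=W_{F'}K_{F}$, reducing to the two extreme cases $K_{F'}=K_{F}$ and $W_{F}=K_{F}W_{F'}$; the second case is immediate because there the bottom map is itself a Shapiro bijection, and the first case is settled by the explicit element $\hat k\in\hat\mathbf{G}$ defined by $\hat k(v_{i})=\hat h_{v_{i}}(1)^{-1}\hat g_{v_{i}}(1)$, which is checked to realize the required coboundary on $K_{F}$. Your sketch contains neither this d\'evissage nor any candidate conjugating element. Second, your appeal to the structure of the quotient $W_{F}/K_{F}$ (``it is $\ZM$ when $K_{F}=I_{F}$'') is both unnecessary and unavailable in the other cases the lemma must cover ($K_{F}=I_{F}^{(\ell)}$ or $P_{F}$); the paper's argument uses only that $K_{F}$ is a closed normal subgroup of $W_{F}$, which is what makes $v_{i}\gamma v_{i}^{-1}\in K_{F}$ and the identity above usable. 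Your uniqueness observation (injectivity via the top Shapiro bijection) is correct, but all the substance of the lemma lies in the part you left open, and in the form you chose it cannot be closed.
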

\begin{proof}
  The two diagrams are transitive with respect to intermediate field extensions. Applying this to
  the extension $F''$ defined by $W_{F''}=W_{F'}K_{F}$, we see it
  is enough to prove the claim in the following 2 cases : a) $K_{F}=K_{F'}$
 or b) $W_{F}=K_{F}W_{F'}$.

In case b), we have $\hat\mathbf{G}={\rm Ind}_{K_{F'}}^{K_{F}}\hat\mathbf{G}'$ as groups with
$K_{F}$-action, so that the bottom map of our diagram is also an isomorphism and the claim is clear.

In case a) we have to prove that for any two cocycles 
$(\hg_{w})_{w\in W_{F}}$, and $(\hh_{w})_{w\in  W_{F}}$ in $Z^{1}(W_{F},\hat\mathbf{G})$, we have
$$ \left(\forall \gamma\in K_{F}=K_{F'},\, \hg_{\gamma}(1)=\hh_{\gamma}(1) \right)
 \Rightarrow
\left((\hg_{\gamma})_{\gamma\in K_{F}} = (\hh_{\gamma})_{\gamma\in  K_{F}}
\hbox{ in } H^{1}(K_{F},\hat\mathbf{G})\right).
$$
Let us fix a set $\{v_{1}=1,\ldots, v_{r}\}$ of representatives of left $W_{F'}$-cosets in $W_{F}$ and
let us denote by $\hat k$ the unique element of $\hat\mathbf{G}$ such that $\hat k(v_{i})=
\hh_{v_{i}}(1)^{-1}\hg_{v_{i}}(1)$ for all $i$.

The cocycle property tells us that $\hg_{\gamma}(v)=\hg_{v}(1)^{-1}\hg_{v\gamma}(1)$ for all
$v\in W_{F}$, hence also $\hg_{\gamma}(v)=\hg_{v}(1)^{-1}\cdot\hg_{v\gamma v^{-1}}(1)\cdot
{^{v\gamma v^{-1}}(\hg_{v}(1))}$ and the same for $\hh$.
 Since by hypothesis we have $\hg_{v\gamma v^{-1}}(1)=\hh_{v\gamma v^{-1}}(1)$,
this implies that for each $i$ we have

$$\hh_{\gamma}(v_{i})
=\hat k(v_{i}) \cdot\hg_{\gamma}(v_{i}) \cdot{^{v_{i}\gamma v_{i}^{-1}}\hat k}(v_{i})^{-1}
=\hat k(v_{i}) \cdot\hg_{\gamma}(v_{i}) \cdot\hat k(v_{i}\gamma)^{-1}
$$ 
Then, for any $v\in W_{F}$, writing uniquely $v=v'v_{i}$ with $v'\in W_{F'}$ we get
$$\hh_{\gamma}(v)= {^{v'}\hh_{\gamma}(v_{i})} = 
{^{v'}\hat k(v_{i})} \cdot {^{v'}\hg_{\gamma}(v_{i})} \cdot {^{v'}\hat k}(v_{i}\gamma)^{-1}
=\hat k(v)  \cdot\hg_{\gamma}(v) \cdot \hat k(v\gamma)^{-1}.$$
Since $\hat k (v\gamma) = (\gamma\hat k)(v)$, this shows that $(\hh_{\gamma})_{\gamma\in K_{F}}$ is
cohomologous to $(\hg_{\gamma})_{\gamma\in K_{F}}$.
\end{proof}

Recall that, by definition, the set of admissible $K_{F}$-parameters for $\mathbf{G}$ is the set of
continous sections
$\phi: K_{F}\To{}{^{L}\mathbf{G}}$ such that, writing
$\phi(\gamma)=(\hat\phi_{\gamma},\gamma)$, we have
$$(\hat\phi_{\gamma})_{\gamma\in K_{F}}\in {\rm  Image}\left(\Phi(\mathbf{G},\oQl)\To{\rm res}
H^{1}(W_{F},\hat\mathbf{G}) \To{\rm res}
H^{1}(K_{F},\hat\mathbf{G})\right).$$
Recall also that we have denoted this set by  $\Phi_{\rm inert}(\mathbf{G},\oQl)$,
 $\Phi_{\ell'-\rm inert}(\mathbf{G},\oQl)$ and 
$\Phi_{\rm wild}(\mathbf{G},\oQl)$ according to $K_{F}$ being $I_{F}$, $I_{F}^{(\ell)}$
and $P_{F}$. 

\begin{coro}\label{coroShapiro}
  The Shapiro map induces bijections $\Psi(\mathbf{G},\oQl)\simto\Psi(\mathbf{G'},\oQl)$
  where $\Psi$ denotes either $\Phi_{\rm inert}$ or $\Phi_{\ell'-\rm inert}$ or
  $\Phi_{\rm wild}$.
\end{coro}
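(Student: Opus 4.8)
The plan is to obtain the corollary from Lemma~\ref{lemmaShapiro} together with the classical (commutative) Shapiro lemma by a diagram chase, the only input not of a purely formal nature being that the Shapiro isomorphism is compatible with admissibility. Throughout I work with $\hat\mathbf{G}$-conjugacy classes, that is, inside the various $H^{1}$'s. Write $s:H^{1}(K_{F},\hat\mathbf{G})\to H^{1}(K_{F'},\hat\mathbf{G}')$ and $s_{W}:H^{1}(W_{F},\hat\mathbf{G})\simto H^{1}(W_{F'},\hat\mathbf{G}')$ for the maps induced by Shapiro's map; they make sense because Shapiro's map respects the coboundary relation, $s_{W}$ is bijective by Shapiro's lemma, and $s\circ{\rm res}_{K_{F}}={\rm res}_{K_{F'}}\circ s_{W}$ by construction (commutativity of the squares displayed above). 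Since $K_{F}$ is one of $I_{F}$, $I_{F}^{(\ell)}$, $P_{F}$ with $K_{F'}=W_{F'}\cap K_{F}$ the corresponding subgroup of $W_{F'}$, the three cases $\Psi\in\{\Phi_{\rm inert},\Phi_{\ell'-\rm inert},\Phi_{\rm wild}\}$ are handled at once. Finally recall that, by the definition reproduced just above, $\Psi(\mathbf{G},\oQl)={\rm res}_{K_{F}}\big(\Phi_{\rm Weil}(\mathbf{G},\oQl)\big)$, where $\Phi_{\rm Weil}(\mathbf{G},\oQl)\subseteq H^{1}(W_{F},\hat\mathbf{G})$ denotes the image of $\Phi(\mathbf{G},\oQl)$, and likewise for $\mathbf{G}'$.

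The first step is to show that $s$ restricts to a bijection $A\simto A'$, where $A:={\rm res}_{K_{F}}\big(H^{1}(W_{F},\hat\mathbf{G})\big)$ and $A':={\rm res}_{K_{F'}}\big(H^{1}(W_{F'},\hat\mathbf{G}')\big)$. Surjectivity onto $A'$ is immediate from the commuting square and the surjectivity of $s_{W}$: $s(A)={\rm res}_{K_{F'}}\big(s_{W}(H^{1}(W_{F},\hat\mathbf{G}))\big)={\rm res}_{K_{F'}}\big(H^{1}(W_{F'},\hat\mathbf{G}')\big)=A'$. Injectivity on $A$ is exactly where Lemma~\ref{lemmaShapiro} is needed: if $c_{1}={\rm res}_{K_{F}}(\tilde c_{1})$ and $c_{2}={\rm res}_{K_{F}}(\tilde c_{2})$ satisfy $s(c_{1})=s(c_{2})$, put $b:=s_{W}(\tilde c_{1})$, so that ${\rm res}_{K_{F'}}(b)=s(c_{1})=s(c_{2})$; then both pairs $(b,c_{1})$ and $(b,c_{2})$ lie in the fibre product $H^{1}(W_{F'},\hat\mathbf{G}')\times_{H^{1}(K_{F'},\hat\mathbf{G}')}H^{1}(K_{F},\hat\mathbf{G})$, which by the lemma is identified with $H^{1}(W_{F},\hat\mathbf{G})$ so that the first projection becomes $s_{W}$; as $s_{W}$ is injective and the two pairs have the same image $b$ under that projection, they are equal, so $c_{1}=c_{2}$.

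Because $\Psi(\mathbf{G},\oQl)={\rm res}_{K_{F}}(\Phi_{\rm Weil}(\mathbf{G},\oQl))$ is contained in $A$, the map $s$ is automatically injective on $\Psi(\mathbf{G},\oQl)$, and all that is left is the equality $s\big(\Psi(\mathbf{G},\oQl)\big)=\Psi(\mathbf{G}',\oQl)$. By the commuting square, $s\big(\Psi(\mathbf{G},\oQl)\big)={\rm res}_{K_{F'}}\big(s_{W}(\Phi_{\rm Weil}(\mathbf{G},\oQl))\big)$, so it is enough to know that $s_{W}$ carries $\Phi_{\rm Weil}(\mathbf{G},\oQl)$ onto $\Phi_{\rm Weil}(\mathbf{G}',\oQl)$ — equivalently, since $\Phi_{\rm Weil}$ is the set of classes of \emph{admissible} Weil parameters, that the Shapiro isomorphism preserves admissibility. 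This is the one point requiring an actual argument, and it is where I expect what little difficulty there is to be located; I would verify it on a finite algebraic quotient. Fixing a finite Galois extension $F_{0}/F$ that splits $\mathbf{G}$ and contains $F'$, a section $\varphi$ of ${^{L}\mathbf{G}}\to W_{F}$ is admissible precisely when each element $\varphi(w)^{[F_{0}:F]}\in\hat\mathbf{G}$ is semisimple; under the identification $\hat\mathbf{G}={\rm Ind}_{W_{F'}}^{W_{F}}\hat\mathbf{G}'$ such an element decomposes, one block per $W_{F'}$-coset, into the corresponding elements attached to the Shapiro section $\varphi'$ of ${^{L}\mathbf{G}'}\to W_{F'}$, the several blocks yielding $W_{F'}$-conjugate elements of $\hat\mathbf{G}'$; hence $\varphi$ is admissible if and only if $\varphi'$ is. This is the usual compatibility of the $L$-group of a restriction of scalars with semisimple (Frobenius-semisimple) elements, and it closes the argument.

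Finally, I would remark that when $\mathbf{G}$ is of $\GL$-type the last step can be bypassed entirely, because the admissibility condition then imposes nothing at all on restrictions to $K_{F}$: a continuous section $K_{F}\to{^{L}\mathbf{G}}$ has finite, hence semisimple, image, and the semisimplification of any Weil parameter has the same $K_{F}$-restriction up to conjugacy (semisimplification commutes with restriction once the restriction is already semisimple). Thus $\Psi(\mathbf{G},\oQl)=A$ in that case, and the corollary is literally the bijection $s:A\simto A'$ established above.
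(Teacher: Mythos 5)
Your proposal is correct and takes essentially the same approach as the paper: the paper's one-line proof likewise rests on Lemma \ref{lemmaShapiro} together with the fact that the Weil-level Shapiro bijection preserves admissibility, which it simply cites from \cite[8.4]{BorelCorvallis} instead of verifying semisimplicity on a finite algebraic quotient as you sketch. Your explicit diagram chase (injectivity via the cartesian square and bijectivity of the $W$-level map) is exactly the implicit content of the paper's argument.
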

\begin{proof}
  This is because the Shapiro bijection $H^{1}(W_{F},\hat\mathbf{G}) \To{}
  H^{1}(W_{F'},\hat\mathbf{G}')$ preserves the admissibility conditions on both sides \cite[8.4]{BorelCorvallis}.
\end{proof}

  Let $\phi: K_{F}\To{} {^{L}\mathbf{G}}$ be an admissible $K_{F}$-parameter 
and let $\phi'$ be its Shapiro mate. Pick a parameter $\varphi:\,W_{F}\To{}
  {^{L}\mathbf{G}}$ that extends $\phi$ and let $\varphi'$ be its
  Shapiro mate. We get an action ${\rm Int}_{\varphi}:\,w\mapsto
{\rm Int}_{\varphi(w)}$  of $W_{F}$ on $\hat\mathbf{G}$, where ${\rm Int}_{\varphi(w)}$ means
conjugation by $\varphi(w)$ inside $^{L}\mathbf{G}$. Similarly,
we have an action ${\rm Int}_{\varphi'}$ of $W_{F'}$ on $\hat\mathbf{G}'$. Now consider the
map
$$\application{}{(\hat\mathbf{G},{\rm Int}_{\varphi})}
{{\rm Ind}_{W_{F'}}^{W_{F}}(\hat\mathbf{G}',{\rm Int}_{\varphi'})
}
{\hg}{\tilde g :\, w\mapsto  [{\rm Int}_{\varphi(w)}(\hg)](1)}$$
It is easily checked that this map is well-defined and is a
$W_{F}$-equivariant isomorphism of
groups. In fact, writing $\varphi(w)= (\hat\varphi_{w},w)$, we have
$\tilde g(w)=\hat\varphi_{w}(1) \hg(w)\hat\varphi_{w}(1)^{-1}$, so
that the inverse isomorphism is given by 
$\hg(w)=\hat\varphi_{w}(1)^{-1} \tilde g(w)\hat\varphi_{w}(1)$.

The centralizer
$C_{\hat\mathbf{G}}(\phi)$ is stable under the action ${\rm Int}_{\varphi}$, 
and in the last section we had denoted the resulting action by $\alpha_{\varphi}$. 
Similarly, 
$C_{\hat\mathbf{G}}(\phi')$ is stable under the action ${\rm Int}_{\varphi'}$.

\begin{lemme}
  The above isomorphism takes $C_{\hat\mathbf{G}}(\phi)$ into 
${\rm Ind}_{W_{F'}}^{W_{F}}(C_{\hat\mathbf{G}'}(\phi'))$ and induces an isomorphism
$$(C_{\hat\mathbf{G}}(\phi),\alpha_{\varphi})
\simto {\rm Ind}_{W_{F'}/K_{F'}}^{W_{F}/K_{F}}(C_{\hat\mathbf{G}'}(\phi'),\alpha_{\varphi'})$$
where we identify  the RHS with the $K_{F}$-invariant functions in 
${\rm Ind}_{W_{F'}}^{W_{F}}(C_{\hat\mathbf{G}'}(\phi'))$. 
Moreover, $\alpha_{\varphi}$ preserves an \'epinglage of $C_{\hat\mathbf{G}}(\phi)^{\circ}$ if and only if
$\alpha_{\varphi'}$  preserves an \'epinglage of $C_{\hat\mathbf{G}'}(\phi')^{\circ}$.
\end{lemme}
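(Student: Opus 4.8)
The plan is to deduce the whole statement from the $W_{F}$-equivariant isomorphism $\hat g\mapsto\tilde g$ just displayed, by passing to $K_{F}$-invariants. The starting remark is that $C_{\hat\mathbf{G}}(\phi)$ is exactly the fixed subgroup $\hat\mathbf{G}^{K_{F}}$ for the action ${\rm Int}_{\varphi}$ of $W_{F}$: indeed $\varphi(\gamma)=\phi(\gamma)$ for $\gamma\in K_{F}$, and $\hat g\in\hat\mathbf{G}$ centralizes $\phi(\gamma)$ in $^{L}\mathbf{G}$ if and only if ${\rm Int}_{\varphi(\gamma)}(\hat g)=\hat g$. Likewise $C_{\hat\mathbf{G}'}(\phi')=(\hat\mathbf{G}')^{K_{F'}}$ for ${\rm Int}_{\varphi'}$, where $K_{F'}=W_{F'}\cap K_{F}$ is normal in $W_{F'}$ because $K_{F}$ is normal in $W_{F}$. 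Since $\hat g\mapsto\tilde g$ is $W_{F}$-equivariant, it restricts to an isomorphism of $\hat\mathbf{G}^{K_{F}}$ onto $\bigl({\rm Ind}_{W_{F'}}^{W_{F}}(\hat\mathbf{G}',{\rm Int}_{\varphi'})\bigr)^{K_{F}}$, automatically equivariant for the residual $W_{F}/K_{F}$-actions, namely $\alpha_{\varphi}$ on the source and the translation action on the target. Thus part one is reduced to the purely formal Mackey-type identity
$$\bigl({\rm Ind}_{W_{F'}}^{W_{F}}(N,\rho)\bigr)^{K_{F}}\;=\;{\rm Ind}_{W_{F'}/K_{F'}}^{W_{F}/K_{F}}\bigl(N^{K_{F'}},\bar\rho\bigr),$$
valid for any group $N$ with a $W_{F'}$-action $\rho$, specialized to $(N,\rho)=(\hat\mathbf{G}',{\rm Int}_{\varphi'})$, for which $N^{K_{F'}}=C_{\hat\mathbf{G}'}(\phi')$ and $\bar\rho=\alpha_{\varphi'}$.

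To prove this identity, given $h\colon W_{F}\to N$ in the left-hand side, right $K_{F}$-invariance makes $h$ factor through $W_{F}/K_{F}$, and for $\gamma'\in K_{F'}$ one writes $\gamma'w=w\cdot(w^{-1}\gamma'w)$ with $w^{-1}\gamma'w\in K_{F}$, so that the $W_{F'}$-equivariance forces $\rho(\gamma')(h(w))=h(\gamma'w)=h(w)$; hence $h$ takes values in $N^{K_{F'}}$, on which $\rho$ descends to an action $\bar\rho$ of $W_{F'}/K_{F'}$ (which embeds in $W_{F}/K_{F}$). Conversely, any element of the right-hand side pulls back to such an $h$. Running this with $N=\hat\mathbf{G}'$ yields precisely the identification of $C_{\hat\mathbf{G}}(\phi)$ with the $K_{F}$-invariant functions in ${\rm Ind}_{W_{F'}}^{W_{F}}(C_{\hat\mathbf{G}'}(\phi'))$, i.e. with ${\rm Ind}_{W_{F'}/K_{F'}}^{W_{F}/K_{F}}(C_{\hat\mathbf{G}'}(\phi'),\alpha_{\varphi'})$, and the compatibility of $\alpha_{\varphi}$ with $\alpha_{\varphi'}$ is built into the construction.

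For the \'epinglage assertion I would use the concrete model: fixing representatives $\bar v_{1},\dots,\bar v_{s}$ of the coset space $(W_{F'}/K_{F'})\backslash(W_{F}/K_{F})$, evaluation $\bar h\mapsto(\bar h(\bar v_{i}))_{i}$ identifies $C_{\hat\mathbf{G}}(\phi)$, as an algebraic group, with $\prod_{i=1}^{s}C_{\hat\mathbf{G}'}(\phi')$, hence $C_{\hat\mathbf{G}}(\phi)^{\circ}$ with $\prod_{i=1}^{s}C_{\hat\mathbf{G}'}(\phi')^{\circ}$. Writing $\bar v_{i}\bar v=\bar w'_{i}(\bar v)\,\bar v_{\pi(i)}$ with $\bar w'_{i}(\bar v)\in W_{F'}/K_{F'}$, one sees that $\bar v\in W_{F}/K_{F}$ acts on this product by permuting the factors through the natural \emph{transitive} right action of $W_{F}/K_{F}$ on the coset set, composed with twists by the $\alpha_{\varphi'}(\bar w'_{i}(\bar v))$. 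Since an \'epinglage of a product of connected reductive groups is exactly an $s$-tuple of \'epinglages of the factors, the two implications follow: if $\alpha_{\varphi'}$ fixes an \'epinglage $\varepsilon'$, the constant tuple $(\varepsilon',\dots,\varepsilon')$ is $\alpha_{\varphi}$-fixed; and if $\alpha_{\varphi}$ fixes a tuple $(\varepsilon_{1},\dots,\varepsilon_{s})$, then letting $\bar v$ range over the subgroup $W_{F'}/K_{F'}$, which stabilizes the trivial coset $i_{0}$ and acts on that factor exactly through $\alpha_{\varphi'}$, forces $\varepsilon_{i_{0}}$ to be $\alpha_{\varphi'}$-fixed. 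Both $\alpha_{\varphi}$ and $\alpha_{\varphi'}$ factor through finite quotients by \ref{extensions}, so these are finite conditions. The only delicate point is keeping the left/right coset conventions for the induced groups consistent; the actual content is the fixed-point computation of the middle paragraph, and I do not anticipate a genuine obstacle.
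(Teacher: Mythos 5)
Your proposal is correct and takes essentially the same route as the paper: your Mackey-type identity is exactly the paper's fixed-point computation using normality of $K_{F}$ (namely $\tilde g(\gamma' w)=\tilde g(w)$ combined with the $W_{F'}$-equivariance forces values in $C_{\hat\mathbf{G}'}(\phi')$), and your constant-tuple / evaluation-at-the-trivial-coset treatment of \'epinglages is the paper's induced \'epinglage construction and evaluation-at-$1$ argument written out in product coordinates. The only small inaccuracy is the aside that $\alpha_{\varphi}$ itself factors through a finite quotient --- by \ref{extensions} only the outer action does --- but nothing in your argument actually uses this.
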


\begin{proof}
By definition $C_{\hat\mathbf{G}}(\phi)$ is the subgroup of fixed
points under $K_{F}$ acting on $(\hat\mathbf{G},{\rm
  Int}_{\varphi})$. Hence the above isomorphism carries it to the
subgroup
${\rm  Ind}_{W_{F'}}^{W_{F}}(\hat\mathbf{G}',\varphi')^{K_{F}}$.
However for a function $\tilde g$, being $K_{F}$-invariant means
$\tilde g(w\gamma)=\tilde g(w)$ for all $w\in W_{F}$ and $\gamma\in
K_{F}$. Since $K_{F}$ is
  normal in $W_{F}$ this is equivalent to $\tilde g(\gamma w)=\tilde g(w)$ for
   all $w,\gamma$. Applying this to $\gamma\in K_{F'}$ we get
   that $\tilde g(w)\in C_{\hat\mathbf{G}'}(\phi')$ for all $w$, as claimed.
\def\Ind{{\rm Ind}}

Now if $\varepsilon=(B,T,\{x_{\alpha}\})$ is an \'epinglage of 
$C_{\hat\mathbf{G}}(\phi)$ fixed by $\alpha_{\varphi}$,  evaluation at $1$ in the 
isomorphism of the lemma provides an  \'epinglage of 
$C_{\hat\mathbf{G}'}(\phi')$ fixed by $\alpha_{\varphi'}$.
Conversely,
 let $\varepsilon'=(B',T',(x')_{\alpha'\in \Delta'})$  be an \'epinglage of 
$C_{\hat\mathbf{G}'}(\phi')$ stable by $\alpha_{\varphi'}$. 
Put $B={\rm Ind}_{W_{F'}/K_{F'}}^{W_{F}/K_{F}}(B')$ and 
$T={\rm  Ind}_{W_{F'}/K_{F'}}^{W_{F}/K_{F}}(T')$. This is a Borel pair in the
group ${\rm  Ind}_{W_{F'}/K_{F'}}^{W_{F}/K_{F}}(C_{\hat\mathbf{G}'}(\phi'))$,
with set of simple roots
$\Delta= {\rm Ind}(\Delta')=(W_{F'}/K_{F'})\ba [(W_{F}/K_{F})\times \Delta']. $
For a simple root $\alpha=\overline{(v,\alpha')}$, let $x_{\alpha} : W_{F}/K_{F}\To{}
\Hom(\GM_{a},C_{\hat\mathbf{G}'}(\phi'))$ be the function supported on $W_{F'}v$ given by
$x_{\alpha}(w'v)={^{w'}x_{\alpha'}}=x_{w'\alpha'}$.
The triple $(B,T,(x_{\alpha})_{\alpha\in\Delta})$ is then a $W_{F}$-stable \'epinglage
of $\Ind_{W_{F'}/K_{F'}}^{W_{F}/K_{F}}(C_{\hat\mathbf{G}'}(\phi'))$ which, through the
isomorphism of the lemma, provides an \'epinglage $\varepsilon$ 
of $C_{\hat\mathbf{G}}(\phi)$   fixed by $\alpha_{\varphi}$.
\end{proof}

Suppose now that $C_{\hat\mathbf{G}'}(\phi')$ is connected, or equivalently, that
$C_{\hat\mathbf{G}}(\phi)$ is connected, and let us 
fix \'epinglages $\varepsilon$ and $\varepsilon'$ to build the $L$-groups
$^{L}\mathbf{G}_{\phi}$ and $^{L}\mathbf{G}'_{\phi'}$. 
The following is a translation of the last lemma in the language of the previous section. 

\begin{coro} \label{factoShapiro}
  \begin{enumerate}
  \item We have the following relation between $\mathbf{G}_{\phi}$ and $\mathbf{G}'_{\phi'}$. Denote
    by $F''$ the intermediate extension such that $W_{F''}=W_{F'}K_{F}$ and let $\phi'':K_{F''}\To{}
   {^{L}\mathbf{G}''}$ with $\mathbf{G}''={\rm Res}_{F'|F''}\mathbf{G'}$ be the Shapiro mate of $\phi'$. Then we have
$$ \mathbf{G}_{\phi} \simeq {\rm Res}_{F''|F} \mathbf{G}_{\phi''}''
\,\,\hbox{ and }\,\,
 \mathbf{G}'_{\phi'} \simeq \mathbf{G}''_{\phi''}\times_{F''} F',$$
whence in particular an $L$-homomorphism (unique up to conjugacy)
$$ {^{L}\mathbf{G}_{\phi}} 
\To{\xi_{u}} {^{L}{\rm Res}_{F'|F}(\mathbf{G}'_{\phi'})}$$
which is an isomorphism if $F''=F'$ (\emph{i.e.} $K_{F'}=K_{F}$), while its adjoint
  $^{L}(\mathbf{G}_{\phi}\times_{F}F')\To{}{^{L}\mathbf{G'}_{\phi'}}$ is an isomorphism if $F''=F$
  (\emph{i.e} $W_F=W_{F'}K_{F}$). 
  \item 
$\phi$ admits a strict unipotent factorization if and only if $\phi'$ does. Moreover, if
  $\xi:{^{L}\mathbf{G}_{\phi}}\To{}{^{L}\mathbf{G}}$ is a strict unipotent factorization of
    $\phi$, there are a strict unipotent factorization
    $\xi':{^{L}\mathbf{G}'_{\phi'}}\To{}{^{L}\mathbf{G}'}$ of $\phi'$ and a factorization
$$ \xi:\, {^{L}\mathbf{G}_{\phi}} 
\To{\xi_{u}} {^{L}{\rm Res}_{F'|F}(\mathbf{G}'_{\phi'})} \To{\tilde{\xi'}} {^{L}{\rm
    Res}_{F'|F}(\mathbf{G}')}={^{L}\mathbf{G}}
$$
with $\tilde{\xi'}_{|\widehat{{\rm Res}(\mathbf{G}'_{\phi'})}}= {\rm  Ind}_{W_{F'}}^{W_{F}}(\xi'_{|\hat\mathbf{G}'_{\phi'}})$
and $\tilde{\xi'}_{|W_{F}}$ a Shapiro lift of  $\xi'_{|W_{F'}}$.
\end{enumerate}
\end{coro}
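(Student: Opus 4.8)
The plan is to treat this corollary as its header advertises: a dictionary translation of the preceding Lemma, carried to the language of the groups $\mathbf{G}_{\phi}$ and of (strict) unipotent factorizations via the two dualities ``$\widehat{{\rm Res}_{E|F}\mathbf{H}}={\rm Ind}_{W_{E}}^{W_{F}}\hat\mathbf{H}$'' and the Shapiro lemma for $L$-parameters. First I would introduce $F''$ with $W_{F''}=W_{F'}K_{F}$, so that $K_{F''}=W_{F''}\cap K_{F}=K_{F}$ (because $K_{F}\subseteq W_{F''}$) and $W_{F'}\cap K_{F''}=K_{F'}$. By transitivity of restriction of scalars, $\mathbf{G}={\rm Res}_{F''|F}\mathbf{G}''$ with $\mathbf{G}''={\rm Res}_{F'|F''}\mathbf{G}'$; and, unwinding the Shapiro map through the tower $W_{F'}\subset W_{F''}\subset W_{F}$, the parameter $\phi$ is the Shapiro mate over $F$ of $\phi''$ (for $\mathbf{G}''$), while $\phi''$ is the Shapiro mate over $F''$ of $\phi'$ (for $\mathbf{G}'$). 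The preceding Lemma then applies both to $F''/F$ and to $F'/F''$; for the latter we are in the situation $W_{F''}=W_{F'}K_{F''}$, i.e. case b) in the proof of Lemma \ref{lemmaShapiro}, in which the functor ${\rm Ind}_{W_{F'}/K_{F'}}^{W_{F''}/K_{F''}}$ is simply an equivalence of categories.

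For part (i): applying the Lemma to $F''/F$ produces a $W_{F}$-equivariant isomorphism $(C_{\hat\mathbf{G}}(\phi),\alpha_{\varphi})\simto{\rm Ind}_{W_{F''}/K_{F''}}^{W_{F}/K_{F}}(C_{\hat\mathbf{G}''}(\phi''),\alpha_{\varphi''})$ carrying a chosen \'epinglage on the left to the induced one on the right. Dualizing --- using that $\mathbf{G}_{\phi}$ is determined up to $F$-isomorphism by the triple (dual group, canonical outer $W_{F}$-action, \'epinglage) of \ref{Gphiconnected}, and that induction of dual groups is dual to restriction of scalars --- this gives $\mathbf{G}_{\phi}\simeq{\rm Res}_{F''|F}\mathbf{G}''_{\phi''}$. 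Applying the Lemma to $F'/F''$, where the induction is an equivalence, gives $(C_{\hat\mathbf{G}''}(\phi''),\alpha_{\varphi''})\simto(C_{\hat\mathbf{G}'}(\phi'),\alpha_{\varphi'})$ respecting \'epinglages, hence dually $\mathbf{G}'_{\phi'}\simeq\mathbf{G}''_{\phi''}\times_{F''}F'$. Finally, the unit $\mathbf{G}''_{\phi''}\to{\rm Res}_{F'|F''}(\mathbf{G}''_{\phi''}\times_{F''}F')$ of the base-change/restriction adjunction, pushed down by ${\rm Res}_{F''|F}$ and composed with the two isomorphisms just obtained, is an $F$-morphism $\mathbf{G}_{\phi}\to{\rm Res}_{F'|F}\mathbf{G}'_{\phi'}$, whose dual $L$-homomorphism (well-defined up to $\hat\mathbf{G}$-conjugacy) is $\xi_{u}$. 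When $F''=F'$, i.e. $K_{F'}=K_{F}$, the functor ${\rm Res}_{F'|F''}$ is the identity and $\xi_{u}$ is an isomorphism; when $F''=F$, i.e. $W_{F}=W_{F'}K_{F}$, the functor ${\rm Res}_{F''|F}$ is the identity, so $\mathbf{G}_{\phi}\times_{F}F'=\mathbf{G}''_{\phi''}\times_{F''}F'\simeq\mathbf{G}'_{\phi'}$ and the adjoint of $\xi_{u}$ is this last isomorphism.

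For part (ii): by point (i) of the first Proposition in \ref{sec:unip-fact}, $\phi$ admits a strict unipotent factorization iff some parameter $\varphi:W_{F}\to{^{L}\mathbf{G}}$ extending $\phi$ has $\alpha_{\varphi}$ preserving the fixed \'epinglage $\varepsilon$. Since $C_{\hat\mathbf{G}}(\phi)$ is connected, all its \'epinglages are conjugate under it, and conjugating $\varphi$ by an element of $C_{\hat\mathbf{G}}(\phi)$ (which fixes $\phi$) conjugates $\alpha_{\varphi}$; so this condition is independent of $\varepsilon$ and says precisely that some extension $\varphi$ of $\phi$ has $\alpha_{\varphi}$ preserving \emph{an} \'epinglage of $C_{\hat\mathbf{G}}(\phi)^{\circ}$. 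As the Shapiro map is a bijection between extensions of $\phi$ and of $\phi'$, compatible with $\alpha_{\varphi}$, $\alpha_{\varphi'}$, the last assertion of the preceding Lemma shows this holds for $\phi$ iff it holds for $\phi'$ (equivalently, the obstruction class $\beta_{\phi}$ of point (iv) corresponds to $\beta_{\phi'}$ under the Shapiro isomorphism $H^{2}(W_{F}/K_{F},Z(C_{\hat\mathbf{G}}(\phi)))\simto H^{2}(W_{F'}/K_{F'},Z(C_{\hat\mathbf{G}'}(\phi')))$). Now let $\xi$ be a strict unipotent factorization of $\phi$, put $\varphi:=\xi_{|W_{F}}$ and let $\varphi'$ be its Shapiro mate; taking $\varepsilon'$ to be the \'epinglage obtained from $\varepsilon$ by evaluation at $1$ (as in the Lemma's proof), $\alpha_{\varphi'}$ preserves $\varepsilon'$, so $\varphi'=\xi'_{|W_{F'}}$ for a strict unipotent factorization $\xi'$ of $\phi'$, unique up to equivalence. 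It remains to read off $\xi=\tilde{\xi'}\circ\xi_{u}$: using $\xi(\hat c,w)=\hat c\cdot\varphi(w)$ on ${^{L}\mathbf{G}_{\phi}}=C_{\hat\mathbf{G}}(\phi)\rtimes_{\alpha_{\varphi}}W_{F}$ together with the explicit $W_{F}$-equivariant isomorphism $\hat g\mapsto(w\mapsto[{\rm Int}_{\varphi(w)}(\hat g)](1))$, the restriction $\xi_{|\hat\mathbf{G}_{\phi}}$ --- which is just the inclusion $C_{\hat\mathbf{G}}(\phi)\injo\hat\mathbf{G}$ --- becomes the inclusion ${\rm Ind}_{W_{F'}/K_{F'}}^{W_{F}/K_{F}}(C_{\hat\mathbf{G}'}(\phi'))\injo{\rm Ind}_{W_{F'}}^{W_{F}}\hat\mathbf{G}'$, which factors through ${\rm Ind}_{W_{F'}}^{W_{F}}(\xi'_{|\hat\mathbf{G}'_{\phi'}})$, while $\xi_{|W_{F}}=\varphi$ is the Shapiro lift of $\varphi'=\xi'_{|W_{F'}}$. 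These two properties pin down an $L$-homomorphism $\tilde{\xi'}$ (semi-simplicity inherited from $\xi'$ through Shapiro) with $\xi=\tilde{\xi'}\circ\xi_{u}$.

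I expect the argument to be essentially bookkeeping, the delicate part being the normalizations: one must check that the \'epinglage transported by the preceding Lemma's isomorphism is exactly the one used to define the $F$-forms ${\rm Res}_{F''|F}\mathbf{G}''_{\phi''}$ and $\mathbf{G}''_{\phi''}\times_{F''}F'$ (so the displayed $F$-isomorphisms are legitimate, not merely identifications of $\o F$-groups), and that the base-change/restriction adjunction on $L$-groups --- the non-commutative Shapiro correspondence --- is compatible with the isomorphisms above, so that $\xi_{u}$ and $\tilde{\xi'}$ are honest $L$-homomorphisms and the triangle $\xi=\tilde{\xi'}\circ\xi_{u}$ commutes on the nose up to $\hat\mathbf{G}$-conjugacy. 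Tracing $\beta_{\phi}$ through the boundary map of point (iv) under Shapiro is the other place requiring care, though it can be bypassed by the direct \'epinglage argument indicated above.
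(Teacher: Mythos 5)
Your proposal is correct and follows exactly the route the paper intends: the paper offers no written proof beyond declaring the corollary ``a translation of the last lemma'', and your argument supplies precisely that translation (the tower $F\subset F''\subset F'$ with $K_{F''}=K_{F}$, two applications of the lemma, dualization via the \'epinglage statement to get the $F$-forms and $\xi_{u}$, and for (ii) the \'epinglage-preserving-extension criterion of the Proposition in \ref{sec:unip-fact} combined with the lemma's last assertion and Lemma \ref{lemmaShapiro}). The only looseness --- treating the Shapiro correspondence as a bijection on extensions rather than on conjugacy classes, and the on-the-nose versus up-to-$\hat{\mathbf{G}}$-conjugacy reading of $\xi=\tilde{\xi'}\circ\xi_{u}$ --- is shared by the statement itself and is flagged in your final paragraph, so it is not a gap relative to the paper.
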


\begin{rema} \label{RkShapiro}
Because of the form taken by $\tilde{\xi'}$,
 the tranfer map 
$$\tilde{\xi'}_{*}:\Phi({\rm Res}_{F'|F}(\mathbf{G}'_{\phi'}),\oQl)\To{}
  \Phi({\rm Res}_{F'|F}(\mathbf{G}'),\oQl)$$
 coincides with the transfer map $\xi'_{*}$ through the
  Shapiro bijections. Moreover, since $\xi_{u}$ induces an isomorphism 
$ \hat\mathbf{G}_{\phi}^{K_{F}}\simto \widehat{{\rm Res}_{F'|F}\mathbf{G}'_{\phi'}}^{K_{F}}$,
the map  $\tilde{\xi'}$ induces an isomorphism 
$C(1)=\widehat{{\rm Res}_{F'|F}\mathbf{G}'_{\phi'}}^{K_{F}}\simto C_{\hat\mathbf{G}}(\phi)$.
\end{rema}

\subsection{Groups of GL-type}\label{sec:groups-gl-type}

Recall that $\mathbf{G}$ is of $\GL$-type if it is isomorphic to a product of groups of the form
${\rm Res}_{F'|F}(\GL_{n})$. For such a group, the local Langlands correspondence for $\GL_{n}$ and
the Shapiro lemma provide a bijection 
$\Irr{\oQl}{\mathbf{G}(F)}\simto \Phi(\mathbf{G},\oQl)$, $\pi\mapsto \varphi_{\pi}$. 
\begin{lemme} \label{lemmeblockQl}
Let $\phi\in \Phi_{\rm inert}(\mathbf{G},\oQl)$. Define 
$\Rep_{\phi}(G)$ as the smallest direct factor of $\Rep_{\oQl}(G)$ which contains all
irreducible $\pi$ such that ${\varphi_{\pi}}_{|I_{F}}\sim\phi$. Then
 $\Rep_{\phi}(G)$ is a Bernstein block of $\Rep_{\oQl}(G)$.
\end{lemme}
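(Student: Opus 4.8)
The plan is to reduce the assertion, via Bernstein's decomposition, to a purely Galois-theoretic injectivity statement, and then to prove that statement by Clifford theory over the Weil group.

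By \cite{bernstein}, $\Rep_{\oQl}(G)$ decomposes into a product of Bernstein blocks $\Rep_{\mathfrak s}(G)$ indexed by inertial classes $\mathfrak s$ of supercuspidal pairs, so the smallest direct factor containing a given family of objects is the product of the blocks meeting that family. Using the compatibility of the Langlands correspondence with parabolic induction recalled in \ref{Langparql} ($\varphi_\pi|_{W_F}\sim{}^{L}\iota\circ\varphi_\sigma|_{W_F}$ whenever $\pi$ is a subquotient of $i_{M,P}^{G}\sigma$), I note that all irreducible objects of a fixed block $\Rep_{\mathfrak s}(G)$ have one and the same inertial parameter $\phi_{\mathfrak s}:=\varphi_\pi|_{I_F}$: their supercuspidal supports are pairwise inertially equivalent, hence differ by an unramified twist, which leaves the restriction to $I_F$ unchanged. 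Hence $\Rep_\phi(G)=\prod_{\phi_{\mathfrak s}\sim\phi}\Rep_{\mathfrak s}(G)$, a nonempty product since $\phi$, being admissible, extends to an $L$-parameter, which under the Langlands bijection is $\varphi_\pi$ for some $\pi$. So it suffices to show that $\mathfrak s\mapsto\phi_{\mathfrak s}$ is injective on the set of inertial classes.

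First I would reduce to $\mathbf{G}=\GL_n$. For $\mathbf{G}=\prod_i{\rm Res}_{F_i|F}\GL_{n_i}$ all the ingredients (blocks, admissible inertial parameters, and the map $\mathfrak s\mapsto\phi_{\mathfrak s}$) are compatible with the product decomposition, and for each factor the Shapiro bijection of Corollary \ref{coroShapiro} identifies the situation for ${\rm Res}_{F_i|F}\GL_{n_i}$ over $F$ with that for $\GL_{n_i}$ over $F_i$. For $\mathbf{G}=\GL_n$ one uses the classical description (\cite{bernstein}): an inertial class of supercuspidal support is, up to reordering and independent unramified twists, a multiset $\{\rho_1,\dots,\rho_k\}$ of irreducible $W_F$-representations with $\sum_j\dim\rho_j=n$, and $\phi_{\mathfrak s}=\bigoplus_j\rho_j|_{I_F}$ via the Langlands correspondence, which depends only on the multiset of unramified-twist classes $\{[\rho_j]\}$. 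So I must recover $\{[\rho_j]\}$ from the semisimple $I_F$-representation $\bigoplus_j\rho_j|_{I_F}$.

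This is the core step and the main obstacle. Since $W_F/I_F\cong\ZM$ has cohomological dimension $\leq 1$, every irreducible constituent $\tau$ of $\rho|_{I_F}$ (for $\rho$ irreducible over $W_F$) extends to its stabilizer $W_E$ — an unramified extension of $F$ — and $\rho\cong{\rm Ind}_{W_E}^{W_F}\tilde\tau$ for such an extension $\tilde\tau$; in particular $\rho|_{I_F}$ is multiplicity-free and supported exactly on the Frobenius-orbit $O$ of $[\tau]$. It follows that two irreducible $W_F$-representations sharing a common $I_F$-constituent are both induced from extensions of it over the same $W_E$, and those extensions differ by an unramified character of $W_E$; since the multiplicative group of the coefficient field is divisible, that character is the restriction of an unramified character of $W_F$, so the two representations are unramified twists of each other. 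Hence for each orbit $O$ occurring in $\bigoplus_j\rho_j|_{I_F}$, the $\rho_j$ restricting onto $O$ all belong to a single inertial class $[\rho_O]$ attached to $O$, with multiplicity equal to that of the members of $O$ in $\bigoplus_j\rho_j|_{I_F}$; this reconstructs $\{[\rho_j]\}$ and finishes the proof. The delicate point, usually left implicit, is exactly this appeal to divisibility of the coefficient field — without it, inequivalent supercuspidals could a priori share an inertial parameter.
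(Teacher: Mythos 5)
Your proof is correct and follows essentially the same route as the paper: reduce to $\GL_{n}$ via the Shapiro bijections of Lemma \ref{lemmaShapiro} and Corollary \ref{coroShapiro}, then show via Clifford theory for $I_{F}\subset W_{F}$ (multiplicity-one restriction, induction from the stabilizer, extensions unique up to unramified twist) that semisimple $W_{F}$-representations with the same restriction to $I_{F}$ decompose into matching irreducible summands differing by unramified twists. You merely make explicit the details the paper compresses into ``Clifford theory tells us'', including the divisibility of $\oQl^{\times}$ needed to extend an unramified character from $W_{E}$ to $W_{F}$.
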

\begin{proof} We may assume that $\mathbf{G}={\rm Res}_{F'|F}(\GL_{n})$.
In this case, Lemma \ref{lemmaShapiro} and Corollary
  \ref{coroShapiro} provide us with a Shapiro bijection $\Phi_{\rm
    inert}(\GL_{n},\oQl)\simto\Phi_{\rm inert}(\mathbf{G},\oQl)$, $\phi'\mapsto \phi$,
  such that $\Rep_{\phi'}(\GL_{n}(F'))=\Rep_{\phi}(\mathbf{G}(F))$. So we are reduced
  to the case $\mathbf{G}=\GL_{n}$. In this case, we need to prove that the extensions
  $\varphi$ of $\phi$ fall in a single ``inertial class''. Equivalently, writing
  $\varphi(w)=(\hat\varphi(w),w)$ with $\hat\varphi$ an $n$-dimensional representation, we
  see that we need to prove
  that if two semisimple representations $\hat\varphi,\hat\varphi'$ of $W_{F}$ are isomorphic as
  $I_{F}$-representations, then there are decompositions
  $\hat\varphi=\hat\varphi_{1}\oplus\cdots\oplus\hat\varphi_{r}$ and
  $\hat\varphi'=\hat\varphi'_{1}\oplus\cdots\oplus\hat\varphi'_{r}$
  and unramified characters $\chi_{i}$, $i=1,\ldots, r$, 
  of $W_F$ such that $\hat\varphi'_{i}=\chi_{i}\hat\varphi_{i}$ for all $i=1,\cdots, r$. But
  Clifford theory tells us that the restriction of an irreducible $\hat\varphi$ to $I_{F}$
  has multiplicity one, and that any $W_{F}$-invariant multiplicity one semisimple
  representations of $I_{F}$ extends to a representation of $W_{F}$ which is irreducible
  and unique up to unramified twist. Hence any decomposition 
  $\hat\varphi=\hat\varphi_{1}\oplus\cdots\oplus\hat\varphi_{r}$ into irreducible summands has to be
  matched by a similar decomposition of $\hat\varphi'$ satisfying the desired twisting property.
\end{proof}
 Therefore, we have a parametrization of
 Bernstein blocks of $\Rep_{\oQl}(\mathbf{G}(F))$ by $\Phi_{\rm inert}(\mathbf{G},\oQl)$,
 which moreover is compatible with the Shapiro bijection.
Let us turn to Vign\'eras-Helm blocks. 

\begin{prop}\label{lemmeblockZl}
  Let $\phi\in\Phi_{\ell'-\rm inert}(\mathbf{G},\oQl)$. There is a unique direct factor subcategory
  $\Rep_{\phi}(G)$ of $\Rep_{\oZl}(\phi)$ such that for any $\pi\in\Irr{\oQl}{G}$ we have
  $\pi\in \Rep_{\phi}(G)$ if and only if ${\varphi_{\pi}}_{|I_{F}^{(\ell)}}\sim
  \phi$. Moreover $\Rep_{\phi}(G)$ is a block.
\end{prop}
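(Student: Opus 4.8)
The plan is to run the same three-step argument as in the proof of Lemma~\ref{lemmeblockQl}, with Bernstein's decomposition replaced by the Vign\'eras--Helm decomposition and the $\oQl$-Langlands correspondence replaced by Vign\'eras' mod-$\ell$ one.

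\emph{Reduction to $\mathbf{G}=\GL_{n}$.} Writing $\mathbf{G}\simeq\prod_{i}{\rm Res}_{F_{i}|F}\GL_{n_{i}}$ we have $G=\prod_{i}\GL_{n_{i}}(F_{i})$, so (by Helm) the block decomposition of $\Rep_{\oZl}(G)$ is the external product of the block decompositions of the factors, while $\Phi_{\ell'-\rm inert}(\mathbf{G},\oQl)=\prod_{i}\Phi_{\ell'-\rm inert}({\rm Res}_{F_{i}|F}\GL_{n_{i}},\oQl)$ and the condition on an irreducible representation of $G$ (which is an external tensor product of irreducibles of the factors) separates accordingly. By Corollary~\ref{coroShapiro} together with the compatibility of Vign\'eras' correspondence with the non-commutative Shapiro lemma, we may further replace each ${\rm Res}_{F_{i}|F}\GL_{n_{i}}$ by $\GL_{n_{i}}$ over $F_{i}$. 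Hence we may and do assume $\mathbf{G}=\GL_{n}$.

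\emph{Construction of $\Rep_{\phi}(G)$.} The commutative diagram of \ref{Lanparzl} furnishes a bijection $b:\Phi_{\ell'-\rm inert}(\GL_{n},\oQl)\simto\Phi_{\rm inert}(\GL_{n},\oFl)$, namely $\phi\mapsto\bar\phi$ where $\bar\phi$ is the unique element with $\bar\phi_{|I_{F}^{(\ell)}}=r_{\ell}(\phi)$ (it is the composite of the bijection $r_{\ell}$ on $\ell'$-inertial parameters with the inverse of the restriction bijection $\Phi_{\rm inert}(\GL_{n},\oFl)\simto\Phi_{\ell'-\rm inert}(\GL_{n},\oFl)$). Through Vign\'eras' correspondence the Vign\'eras--Helm decomposition reads $\Rep_{\oZl}(\GL_{n}(F))=\prod_{\bar\phi}\Rep_{\bar\phi}(G)$ with each $\Rep_{\bar\phi}(G)$ a block, and I set $\Rep_{\phi}(G):=\Rep_{b(\phi)}(G)$, which is then a block by construction.

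\emph{The characterising property and uniqueness.} Let $\pi\in\Irr{\oQl}{G}$ and $\bar\phi=b(\phi)$. From $e_{\bar\phi}=\sum_{r_{\ell}(\psi)=\bar\phi}e_{\psi}$ in $\ZG_{\oQl}(G)$ and Lemma~\ref{lemmeblockQl} we get that $\pi\in\Rep_{\bar\phi}(G)$ if and only if $r_{\ell}({\varphi_{\pi}}_{|I_{F}})\sim\bar\phi$. If this holds, restricting to $I_{F}^{(\ell)}$ gives $r_{\ell}({\varphi_{\pi}}_{|I_{F}^{(\ell)}})\sim r_{\ell}(\phi)$, hence ${\varphi_{\pi}}_{|I_{F}^{(\ell)}}\sim\phi$ since $r_{\ell}$ is injective on $\oQl$-representations of $I_{F}^{(\ell)}$ (which has prime-to-$\ell$ pro-order); conversely, if ${\varphi_{\pi}}_{|I_{F}^{(\ell)}}\sim\phi$ then $r_{\ell}({\varphi_{\pi}}_{|I_{F}})$ and $\bar\phi$ have the same restriction to $I_{F}^{(\ell)}$, hence coincide because a semisimple $\oFl$-representation of $I_{F}$ is determined by its restriction to $I_{F}^{(\ell)}$. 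Thus $\pi\in\Rep_{\phi}(G)$ if and only if ${\varphi_{\pi}}_{|I_{F}^{(\ell)}}\sim\phi$. For uniqueness, observe that every Vign\'eras--Helm block $\Rep_{\bar\phi}(G)$ contains a nonzero $\oQl$-object (lift $\bar\phi$ to some $\psi$ with $r_{\ell}(\psi)=\bar\phi$ and take $\pi$ with ${\varphi_{\pi}}_{|I_{F}}\sim\psi$); consequently the sets of $\oQl$-irreducible objects of the distinct blocks form a partition of $\Irr{\oQl}{G}$, so any direct factor subcategory of $\Rep_{\oZl}(G)$ equals the product of the blocks it meets and is therefore determined by the set of $\oQl$-irreducibles it contains.

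\emph{Main obstacle.} No step is deep. The only place asking for care is the third one --- checking on the nose that the idempotent-theoretic condition $r_{\ell}({\varphi_{\pi}}_{|I_{F}})\sim\bar\phi$ coming from the Vign\'eras--Helm decomposition matches the condition ${\varphi_{\pi}}_{|I_{F}^{(\ell)}}\sim\phi$ of the statement, i.e. the commutativity of the diagram of \ref{Lanparzl} --- together with the bookkeeping in the first step ensuring that the $\oZl$-block decomposition is genuinely multiplicative in products and compatible with the Shapiro lemma.
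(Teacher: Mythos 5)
Your overall architecture (reduce to $\GL_n$ by products and Shapiro, re-index the Vign\'eras--Helm decomposition by $\oFl$-inertial Galois data, then transfer the characterisation through the restriction/reduction diagram) is the same as the paper's, but the step that carries all the weight is assumed rather than proved. Your third step rests entirely on the idempotent identity $e_{\bar\phi}=\sum_{r_\ell(\psi)=\bar\phi}e_\psi$, cited from \ref{Lanparzl}. Inside this paper that citation is circular: the paragraph \ref{Lanparzl} is introductory, explicitly defers its details to the very proposition you are proving (``see \ref{lemmeblockZl} for some details''), and the idempotent equality written there with Galois-parameter indexing is essentially a restatement of the proposition. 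Indeed, to even index the Vign\'eras--Helm blocks by $\Phi_{\rm inert}(\GL_n,\oFl)$ one needs the $\oFl$-analogue of the Clifford-theory argument of Lemma \ref{lemmeblockQl} together with Vign\'eras' mod-$\ell$ correspondence, and to know that the $\oQl$-block $\Rep_\psi$ sits inside the Helm block $\Rep_{\bar\phi}$ exactly when $r_\ell(\psi)=\bar\phi$ one needs to know that Helm's notion of mod-$\ell$ inertial supercuspidal support of a $\oQl$-irreducible is computed, on the Galois side, by semisimplified reduction of the inertial parameter. That is the actual content here, and the paper supplies it by invoking the definition of the mod-$\ell$ inertial supercuspidal support (Def.\ 4.10 of \cite{HelmBernstein}) and Theorem 11.8 of \cite{HelmBernstein}, combined with the compatibility of Vign\'eras' correspondence \cite{VigLanglands} with reduction mod $\ell$ and supercuspidal supports.

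Your ``main obstacle'' paragraph mis-locates the difficulty: the commutativity of the restriction/reduction square and the bookkeeping for products and Shapiro are indeed routine (and your uniqueness argument via partitioning $\Irr{\oQl}{G}$ is fine), but the idempotent equality is not a formal consequence of those diagrams --- it is the representation-theoretic input from Helm and Vign\'eras. To repair the proof, replace the appeal to \ref{Lanparzl} by a direct argument: run the Clifford-theory argument of Lemma \ref{lemmeblockQl} over $\oFl$ to identify Vign\'eras' inertial classes with $\Phi_{\rm inert}(\GL_n,\oFl)$, and then deduce from Helm's Def.\ 4.10 and Thm.\ 11.8, via Vign\'eras' correspondence, that two $\oQl$-irreducibles lie in the same Helm block if and only if their parameters agree on $I_F^{(\ell)}$; the rest of your argument then goes through.
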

\begin{proof} Again, we may assume that $\mathbf{G}={\rm Res}_{F'|F}(\GL_{n})$, and using 
 Lemma \ref{lemmaShapiro} and Corollary
  \ref{coroShapiro},
we are reduced
  to the case $\mathbf{G}=\GL_{n}$.
  The Vign\'eras blocks of $\Rep_{\oFl}(\GL_{n}(F))$ are parame\-trized by inertial
  classes of semisimple $\oFl$-representations of $W_{F}$, and the same proof as above
  shows these are in bijection with isomorphism classes of semisimple
  $\oFl$-representations of $I_{F}$ that extend to $W_{F}$. As explained in \ref{Lanparzl},
  the latter are in
  bijection with isomorphism classes  of 
(semisimple) $\oQl$-representations of $I_{F}^{(\ell)}$, via reduction mod
  $\ell$ and restriction. Going through these identifications, considering the
  definition of the ``mod $\ell$ inertial supercuspidal support'' of  a
  $\pi\in\Irr{\oQl}{\GL_{n}(F)}$ in   \cite[Def. 4.10]{HelmBernstein} and applying Theorem
11.8 of \cite{HelmBernstein}, we find that
  $\pi,\pi'\in\Irr{\oQl}{\GL_{n}(F)}$ lie in the same Helm block if and only if
  $\varphi_{\pi}$ and $\varphi_{\pi'}$ have isomorphic restrictions to
  $I_{F}^{(\ell)}$. 
\end{proof}

We thus get a parametrization of Vign\'eras-Helm blocks of
$\Rep_{\oZl}(\mathbf{G}(F))$ by $\Phi_{\ell'-\rm inert}(\mathbf{G},\oQl)$, 
which is compatible with Shapiro bijections.

\medskip

\alin{Assumptions and convention}
In the sequel,  $K_{F}$ will denote one of the subgroups $I_{F}$, $I_{F}^{(\ell)}$ or $P_{F}$ of $W_{F}$. 
 The notation $\Rep_{\phi}(G)$ will denote a block of $\Rep_{\oZl}(G)$ if
$K_{F}=I_{F}^{(\ell)}$, and a block of $\Rep_{\oQl}(G)$ if $K_{F}=I_{F}$. When 
$K_{F}=P_{F}$, it will denote a direct factor of $\Rep_{\oZl}(G)$.
On the other hand, the notation $\Irr{\phi}{G}$ will always denote the set of $\oQl$-irreducible representations in
this block.

\ali
We will denote by $\EC_{F}(\phi',\xi)$ the following statement, that
depends on an admissible $K_{F}$-parameter $\phi' :\, K_{F}\To{}{^{L}\mathbf{G'}}$ and an 
$L$-homomorphism $\xi:\,{^{L}\mathbf{G'}}\To{}{^{L}\mathbf{G}}$ which induces an isomorphism
$C_{\hat\mathbf{G}'}(\phi')\simto C_{\hat\mathbf{G}}(\phi)$,  where $\phi=\xi\circ\phi'$.

\begin{center}
  $\EC_{F}(\phi',\xi)$ : 
  \begin{tabular}{|l}
there is an equivalence of categories $\Rep_{\phi'}(G')\simto\Rep_{\phi}(G)$ \\
  that extends the transfer map $\xi_{*}:\,\Irr{\phi'}{G'}\To{}\Irr{\phi}{G}$.
\end{tabular}
\end{center}

We also denote by $\EC_F(\phi',\xi)^{-}$ the same statement without the condition on the
transfer map.

\begin{exam}\label{exempleequiv}
  Suppose that $\xi$ is a Levi subgroup embedding. Then we can embed $\mathbf{G'}$
  as an $F$-rational Levi subgroup of $\mathbf{G}$ (well-defined up to conjugacy). The assumption
  that $\xi$ induces an isomorphism of centralizers translates into the property that the
 normalizer of the inertial supercuspidal support of any $\pi\in
 \Irr{\phi}{G}$ is contained (up to conjugacy) in 
  $\mathbf{G'}$. In this context, it is known that for any parabolic subgroup $\mathbf{P}$ of
  $\mathbf{G}$ with $\mathbf{G}'$ as a Levi component, the \emph{normalized} parabolic induction
  functor ${\rm Ind}_{P}^{G}$ induces an equivalence of categories $\Rep_{\phi}(G')\simto
  \Rep_{\phi}(G)$  (which is independent of the choice of $P$ up to natural transform).
We refer to \cite[Thm. 12.3]{HelmBernstein} for $\oZl$ coefficients.
We claim that this equivalence is compatible with the transfer $\xi_{*}$. Indeed, using the Shapiro yoga, it is
enough to treat the case of $\mathbf{G}=\GL_{n}$ and $\mathbf{G'}=\prod_{i=1}^{r}\GL_{n_{i}}$, where
$\xi_{*}$ is given in terms of representations by $(\sigma_{1},\cdots,\sigma_{r})\mapsto
\sigma_{1}\oplus\cdots\oplus\sigma_{r}$, which is well known to correspond to normalized parabolic induction in
this context. Therefore $\EC_F(\phi',\xi)$ is satisfied in this setting.
\end{exam}

\alin{Computation of $\mathbf{G}_{\phi}$ when $\mathbf{G}=\GL_{n}$} \label{computGL}
In this case we may write $\phi= \hat\phi\times\id_{W_{F}}$ where $\hat\phi$ is
 an $n$-dimensional semi-simple representation of $K_{F}$ that can be extended to
 $W_{F}$. Our aim is to find a nice extension  $\hat\varphi$ of $\hat\phi$ to $W_{F}$.
 There is a decomposition 
$\hat\phi= \hat\phi_{1}\oplus\cdots\oplus\hat\phi_{r}$, uniquely determined (up to
reordering) by the following properties :
\begin{enumerate}
\item the irreducible constituents of  $\hat\phi_{i}$ form a  $W_{F}$-orbit,
\item $\Hom_{K_{F}}(\hat\phi_{i},\hat\phi_{j})=0$ whenever $i\neq j$.
\end{enumerate}
Since this decomposition is preserved by any extension of $\hat\phi$ to $W_{F}$, 
each $\hat\phi_{i}$ is extendable to $W_{F}$. Putting $n_{i}:=\dim\hat\phi_{i}$, this
means that $\phi$ factors through a Levi subgroup embedding
$\iota:\,(\GL_{n_{1}}\times\cdots\times\GL_{n_{r}})\times W_{F} \injo \GL_{n}\times
W_{F}$. Moreover, by ii) this Levi subgroup contains the centralizer of $\phi$ (in other
words $\iota$ induces an isomorphism of centralizers).

To compute the centralizer, let us write 
$$\hat\phi_{i} = \oQl^{e_{i}}\otimes\left(\bigoplus_{w\in W_{F}/W_{\sigma_{i}}} {^{w}\sigma_{i}}\right)$$
where $\sigma_{i}$ is some irreducible representation of $K_{F}$ and $W_{\sigma_{i}}$ its stabilizer in
$W_{F}$. This decomposition identifies $C_{\hat\mathbf{G}}(\phi)$ with
$\prod_{i=1}^{r}\GL_{e_{i}}(\oQl)^{[W_{F}:W_{\sigma_{i}}]}$. 

\begin{lem}
  Any irreducible representation $\sigma$ of $K_{F}$ can be extended to its normalizer
  $W_{\sigma}$ in $W_{F}$.
\end{lem}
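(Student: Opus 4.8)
The plan is to construct the extension in two steps, crossing first the subgroup $I_\sigma:=W_\sigma\cap I_F$ and then the quotient $W_\sigma/I_\sigma$. Before that, a few reductions: since $\sigma$ is smooth and finite-dimensional over $\oQl$, its kernel is open in $K_F$, and as isomorphic representations have equal kernels, $\ker\sigma$ is normalized by $W_\sigma$; thus $\sigma$ factors through the finite group $K_F/\ker\sigma$, on which $W_\sigma$ acts. Moreover $W_\sigma/I_\sigma$ embeds into $W_F/I_F=\ZM$, hence is free, so the sequence $1\to I_\sigma\to W_\sigma\to W_\sigma/I_\sigma\to 1$ splits and we may fix a lift $\phi\in W_\sigma$ of a generator of $W_\sigma/I_\sigma$ (if $W_\sigma=I_\sigma$ the second step is vacuous). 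Finally $N:=I_\sigma/K_F$ is a closed subgroup of $I_F/K_F$, which is pro-cyclic in each of the three allowed cases ($1$, $\ZM_\ell$, or $\widehat{\ZM}^{(p')}$), so $N$ is pro-cyclic, torsion-free, and of pro-order prime to $p$.

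\emph{Step one:} extend $\sigma$ from $K_F$ to $I_\sigma$. This is classical Clifford theory. Choosing for each $x\in I_\sigma$ an isomorphism $A_x\colon\sigma\simto{}^{x}\sigma$, locally constant in $x$ and normalized so that $A_k=\sigma(k)$ for $k\in K_F$, one gets a continuous $2$-cocycle on $N$ valued in the trivial module $\oQl^{\times}$, whose class is the obstruction to an extension. But $\oQl^{\times}$ is divisible, so $H^2$ of any finite cyclic group with $\oQl^{\times}$-coefficients vanishes, hence so does the continuous $H^2(N,\oQl^{\times})$ for $N$ pro-cyclic; the cocycle is therefore a coboundary and we obtain a continuous extension $\tilde\sigma\colon I_\sigma\to\GL(V)$ of $\sigma$.

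\emph{Step two:} extend $\tilde\sigma$ to $W_\sigma=I_\sigma\rtimes\langle\phi\rangle$. The representation ${}^{\phi}\tilde\sigma\colon x\mapsto\tilde\sigma(\phi^{-1}x\phi)$ of $I_\sigma$ restricts on $K_F$ to ${}^{\phi^{-1}}\sigma$, isomorphic to $\sigma$ because $\phi\in W_\sigma$; fixing $g\in\GL(V)$ realizing this isomorphism, $g^{-1}({}^{\phi}\tilde\sigma)g$ is again an extension of $\sigma$ to $I_\sigma$, hence has the form $\chi\cdot\tilde\sigma$ for a unique continuous character $\chi$ of $N=I_\sigma/K_F$. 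To make $\tilde\sigma$ $\phi$-invariant I must twist it by a character $\psi$ of $N$ with $\psi/{}^{\phi}\psi=\chi$, i.e. I must see that $\chi$ is trivial in $H^1(\langle\phi\rangle,\hat N)=\hat N/(\phi-1)\hat N$, where $\hat N=\mathrm{Hom}_{\mathrm{cont}}(N,\oQl^{\times})$. This is the crux of the proof. Conjugation by $\phi$ scales $I_F/K_F$, hence its closed subgroup $N$, by $q^{m}$, where $q$ is the residue cardinality of $F$ and $m\geq1$ is the image of $\phi$ in $W_F/I_F=\ZM$; since $N$ is torsion-free of pro-order prime to $p$, $\hat N$ is a direct sum of Pr\"ufer groups $\mu_{(\ell')^{\infty}}$ over primes $\ell'\neq p$, on which $\phi-1$ acts by raising to the power $q^{-m}-1$ (meaningful because $q\in\ZM_{\ell'}^{\times}$). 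Now $q^{-m}-1$ is a nonzero element of each $\ZM_{\ell'}$ of finite valuation $v_{\ell'}(q^{m}-1)$, so raising to it is surjective on $\mu_{(\ell')^{\infty}}$; hence $\phi-1$ is surjective on $\hat N$ and $H^1(\langle\phi\rangle,\hat N)=0$. Thus a suitable $\psi$ exists; after replacing $\tilde\sigma$ by $\psi\tilde\sigma$ we have $\tilde\sigma(\phi x\phi^{-1})=g\tilde\sigma(x)g^{-1}$ for all $x\in I_\sigma$, and then $\Sigma(x\phi^{n}):=\tilde\sigma(x)g^{n}$ is a well-defined homomorphism $W_\sigma\to\GL(V)$ — automatically continuous since $\langle\phi\rangle\cong\ZM$ is free and discrete — that extends $\sigma$, as wanted.

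Equivalently, steps one and two together just say $H^2(W_\sigma/K_F,\oQl^{\times})$ (continuous cohomology) vanishes, which one can read off the Hochschild--Serre spectral sequence of $1\to N\to W_\sigma/K_F\to W_\sigma/I_\sigma\to1$: the $(2,0)$ and $(0,2)$ corners vanish since $W_\sigma/I_\sigma$ has cohomological dimension $\leq1$ and $N$ is pro-cyclic with divisible coefficients, and the remaining $(1,1)$ corner $H^1(W_\sigma/I_\sigma,\hat N)$ vanishes by the computation above. I expect the only genuine difficulty to be that last vanishing — controlling the Frobenius action on $\hat N$; the splitness of $W_\sigma\twoheadrightarrow W_\sigma/I_\sigma$, the local constancy of the Clifford cocycles, and the divisibility of $\oQl^{\times}$ should all be routine.
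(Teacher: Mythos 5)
Your proof is correct and takes essentially the same route as the paper: first extend across the pro-cyclic group $(W_{\sigma}\cap I_{F})/K_{F}$ (where the $H^{2}(N,\oQl^{\times})$ obstruction vanishes), then kill the Frobenius twist by a character — your vanishing of $H^{1}(\langle\phi\rangle,\hat N)$ via surjectivity of $\phi-1$ is exactly the paper's extraction of a $(q^{r}-1)$-th root $\chi_{0}$ of $\chi$ — and finally extend across the free quotient $\ZM$. The only difference is presentational: you spell out the cohomological bookkeeping that the paper leaves implicit.
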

\begin{proof}
The case $K_{F}=I_{F}$ is clear since $W_{F}/I_{F}\simeq\ZM$, so we assume that
$K_{F}\subsetneq I_{F}$.
In this case,
 $W_{\sigma}/K_{F}$ is the semi-direct product of a pro-cyclic group
$(W_{\sigma}\cap I_{F})/K_{F}$ by a copy of $\ZM$ acting by multiplication by $q^{r}$ for some $r>0$.
Hence $\sigma$ can be extended to a representation $\wt\sigma_{0}$ of $W_{\sigma}\cap I_{F}$ such
that the generator $w$ of $\ZM$ acts by $^{w}\wt\sigma_{0}\simeq \wt\sigma_{0} \chi$ for some
character $\chi$ of $(W_{\sigma}\cap I_{F})/K_{F}$. But this character
admits a $(q^{r}-1)^{\rm th}$-root
$\chi_{0}$ (since $x\mapsto x^{q^{r}-1}$ is surjective on $\mu_{\ell^{\infty}}$ and
$\mu_{{p'}^{\infty}}$),
 so that $\wt\sigma_{0}\chi_{0}^{-1}$ is fixed by $w$, hence extends to a representation
$\wt\sigma$ as desired. 
\end{proof}

Let us apply this lemma to each $\sigma_{i}$ and pick an extension $\tilde\sigma_{i}$ to $W_{\sigma_{i}}$.
Then, putting
$$\hat\varphi:=\hat\varphi_{1}\oplus\cdots\oplus\hat\varphi_{r},\,\,\hbox{ with }
\hat\varphi_{i}=\oQl^{e}\otimes {\rm Ind}_{W_{\sigma_{i}}}^{W_{F}}(\tilde\sigma_{i}),$$
we get an extension $\varphi$ of $\phi$ such that
$$ (C_{\hat\mathbf{G}}(\phi),\alpha_{\varphi}) \simeq \prod_{i=1}^{r}
 {\rm Ind}_{W_{\sigma_{i}}}^{W_{F}}(\GL_{e_{i}})$$
where $W_{\sigma_{i}}$ acts trivially on $\GL_{e_{i}}$.
In particular $\alpha_{\varphi}$ fixes any diagonal \'epinglage of
$C_{\hat\mathbf{G}}(\phi)\simeq\prod_{i=1}^{r}\GL_{e_{i}}^{[W_{F}:W_{\sigma_{i}}]}$ and we
may identify $C_{\hat\mathbf{G}}(\phi)\rtimes_{\alpha_{\varphi}}W_{F}$ with the $L$-group $^{L}\mathbf{G}_{\phi}$.
Denoting by $F_{i}$ the finite extension such that $W_{F_{i}}=W_{\sigma_{i}}$, we then see that
 $\mathbf{G}_{\phi}\simeq \prod_{i=1}^{r} {\rm Res}_{F_{i}|F}(\GL_{e_{i}})$,
and that the factorization (\ref{factophi}) is  a unipotent factorization $\xi=\xi_{\varphi}$ of $\phi$ of the following form
$$ \xi:   
{^{L}\mathbf{G}_{\phi}}= 
\left(\prod_{i=1}^{r} {\rm Ind}_{W_{F_{i}}}^{W_{F}} \GL_{e_{i}}\right)\rtimes W_{F}
\To{}
\left(\prod_{i=1}^{r} \GL_{n_{i}} \right)\times W_{F} 
\injo \GL_{n}\times W_{F},$$
where $\xi_{|W_{F}}=\varphi$.

\begin{prop} \label{existunipfact}
Let $\phi$ be a $K_{F}$-parameter of a group $\mathbf{G}$  of $\GL$-type. Then 
$\mathbf{G}_{\phi}$ is also of $\GL$-type and $\phi$ admits a strict unipotent factorization
$\phi:\, K_{F}\To{1\times\id} {^{L}\mathbf{G}_{\phi}} \To\xi {^{L}\mathbf{G}}$.
\end{prop}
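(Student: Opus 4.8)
The plan is to reduce the statement for a general group of $\GL$-type to the case $\mathbf{G}=\GL_{n}$, which has already been treated in the discussion preceding the proposition, and then to combine this with the behaviour of the construction under restriction of scalars, as recorded in Corollary \ref{factoShapiro}. First I would write $\mathbf{G}\simeq \prod_{j} {\rm Res}_{F_{j}|F}(\GL_{n_{j}})$ and observe that all the relevant data split along this product decomposition: an admissible $K_{F}$-parameter $\phi$ of $\mathbf{G}$ is the same as a tuple of admissible $K_{F}$-parameters of the factors, the centralizer $C_{\hat\mathbf{G}}(\phi)$ is the product of the centralizers $C_{\widehat{{\rm Res}_{F_{j}|F}\GL_{n_{j}}}}(\phi_{j})$, and a strict unipotent factorization of $\phi$ is simply a product of strict unipotent factorizations of the $\phi_{j}$. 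Moreover $\mathbf{G}_{\phi}\simeq \prod_{j}(\mathbf{G}_{j})_{\phi_{j}}$. Hence it suffices to treat a single quasi-simple factor $\mathbf{G}={\rm Res}_{F'|F}(\GL_{n})$.

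For such a factor I would invoke Corollary \ref{factoShapiro}. Let $\phi':K_{F'}\To{}{^{L}\GL_{n}}$ be the Shapiro mate of $\phi$ (here $\mathbf{G}'=\GL_{n}$ over $F'$, and since $K_{F}$ is one of $I_{F}$, $I_{F}^{(\ell)}$ or $P_{F}$ we have $K_{F'}=W_{F'}\cap K_{F}$ equal to the corresponding subgroup of $W_{F'}$, so in particular $W_{F'}K_{F}=W_{F}$ and the extension $F''$ of that corollary equals $F$). By the explicit construction in paragraph \ref{computGL}, $\phi'$ admits a strict unipotent factorization $\phi':K_{F'}\To{1\times\id}{^{L}(\GL_{n})_{\phi'}}\To{\xi'}{^{L}\GL_{n}}$, where $(\GL_{n})_{\phi'}\simeq \prod_{i=1}^{r}{\rm Res}_{F'_{i}|F'}(\GL_{e_{i}})$ is of $\GL$-type. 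Corollary \ref{factoShapiro}(ii), applied in the case $F''=F$ where $\xi_{u}$ is an isomorphism after passing to the adjoint, then transports $\xi'$ to a strict unipotent factorization $\xi:{^{L}\mathbf{G}_{\phi}}\To{}{^{L}\mathbf{G}}$ of $\phi$, and part (i) of the same corollary identifies $\mathbf{G}_{\phi}\simeq {\rm Res}_{F'|F}\big((\GL_{n})_{\phi'}\big)\simeq \prod_{i=1}^{r}{\rm Res}_{F'_{i}|F}(\GL_{e_{i}})$, which is visibly of $\GL$-type. This completes the reduction.

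The only genuine content beyond bookkeeping is the construction of the nice extension $\hat\varphi$ of $\hat\phi$ in paragraph \ref{computGL}, which rests on the lemma that an irreducible representation $\sigma$ of $K_{F}$ extends to its normalizer $W_{\sigma}$ in $W_{F}$; that lemma and its use have already been carried out in the excerpt, so I would simply cite them. Thus the main (and essentially only) obstacle is organizational: making sure the product and restriction-of-scalars compatibilities line up correctly, in particular that the \'epinglages chosen in the ${\rm Res}$ construction of \ref{factoShapiro} match the diagonal \'epinglages used in \ref{computGL} so that the two $L$-group models $\wt C_{\hat\mathbf{G}}(\phi)$ and ${^{L}\mathbf{G}_{\phi}}=C_{\hat\mathbf{G}}(\phi)\rtimes_{\alpha_{\varphi}}W_{F}$ literally agree (as they do here, since $\alpha_{\varphi}$ fixes the diagonal \'epinglage). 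Once that is checked, the factorization $\xi=\xi_{\varphi}$ exhibited at the end of \ref{computGL} already has the required form, and the general case follows by taking products and applying Shapiro.

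\begin{proof}
Write $\mathbf{G}\simeq\prod_{j=1}^{s}{\rm Res}_{F_{j}|F}(\GL_{n_{j}})$. All the relevant objects are compatible with this product: $\phi$ decomposes as $\phi=(\phi_{j})_{j}$ with $\phi_{j}$ an admissible $K_{F}$-parameter of $\mathbf{G}_{j}:={\rm Res}_{F_{j}|F}(\GL_{n_{j}})$; we have $C_{\hat\mathbf{G}}(\phi)=\prod_{j}C_{\hat\mathbf{G}_{j}}(\phi_{j})$, hence $\mathbf{G}_{\phi}\simeq\prod_{j}(\mathbf{G}_{j})_{\phi_{j}}$; and a strict unipotent factorization of $\phi$ is the same as a tuple of strict unipotent factorizations of the $\phi_{j}$. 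We are thus reduced to the case $\mathbf{G}={\rm Res}_{F'|F}(\GL_{n})$ for a single finite extension $F'|F$.

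Since $K_{F}$ is one of $I_{F}$, $I_{F}^{(\ell)}$ or $P_{F}$, we have $K_{F'}=W_{F'}\cap K_{F}$ equal to $I_{F'}$, $I_{F'}^{(\ell)}$ or $P_{F'}$ respectively, and in particular $W_{F'}K_{F}=W_{F}$, so the intermediate field $F''$ of Corollary \ref{factoShapiro} equals $F$ and $\mathbf{G}''=\mathbf{G}$, $\phi''=\phi$. Let $\phi':K_{F'}\To{}{^{L}\GL_{n}}$ be the Shapiro mate of $\phi$, with $\GL_{n}$ viewed over $F'$; by Corollary \ref{coroShapiro} it is again admissible. By the explicit computation of paragraph \ref{computGL}, choosing extensions $\tilde\sigma_{i}$ of the $\sigma_{i}$ to their normalizers (which exist by the Lemma preceding \ref{existunipfact}), we obtain an extension $\varphi'$ of $\phi'$ to $W_{F'}$ such that $\alpha_{\varphi'}$ fixes a diagonal \'epinglage of $C_{\widehat{\GL_{n}}}(\phi')\simeq\prod_{i=1}^{r}\GL_{e_{i}}^{[W_{F'}:W_{\sigma_{i}}]}$, and consequently a strict unipotent factorization
$$
\phi':\,K_{F'}\To{1\times\id}{^{L}(\GL_{n})_{\phi'}}\To{\xi'}{^{L}\GL_{n}},
$$
with $(\GL_{n})_{\phi'}\simeq\prod_{i=1}^{r}{\rm Res}_{F'_{i}|F'}(\GL_{e_{i}})$, where $W_{F'_{i}}=W_{\sigma_{i}}$.

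Now apply Corollary \ref{factoShapiro}. Part (ii), in the case $F''=F$ (so that the adjoint of $\xi_{u}$ is an isomorphism ${^{L}(\mathbf{G}_{\phi}\times_{F}F')}\simto{^{L}(\GL_{n})_{\phi'}}$), provides a strict unipotent factorization $\xi:{^{L}\mathbf{G}_{\phi}}\To{}{^{L}\mathbf{G}}$ of $\phi$, obtained from $\xi'$ by the Shapiro construction described there. Part (i) identifies
$$
\mathbf{G}_{\phi}\simeq{\rm Res}_{F'|F}\big((\GL_{n})_{\phi'}\big)\simeq\prod_{i=1}^{r}{\rm Res}_{F'_{i}|F}(\GL_{e_{i}}),
$$
which is of $\GL$-type. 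Reassembling the product over $j$, we conclude that $\mathbf{G}_{\phi}$ is of $\GL$-type and that $\phi$ admits a strict unipotent factorization $\phi:K_{F}\To{1\times\id}{^{L}\mathbf{G}_{\phi}}\To{\xi}{^{L}\mathbf{G}}$.
\end{proof}
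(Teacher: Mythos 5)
Your overall strategy (split $\mathbf{G}$ into quasi-simple factors, pass to the Shapiro mate $\phi'$ for $\GL_{n}$ over $F'$, quote the explicit construction of \ref{computGL} there, and come back via Corollary \ref{factoShapiro}) is the same as the paper's, but two of your intermediate claims are false, and one of them carries the part of the statement saying that $\mathbf{G}_{\phi}$ is of $\GL$-type. First, it is not true that $W_{F'}K_{F}=W_{F}$ for $K_{F}\in\{I_{F},I_{F}^{(\ell)},P_{F}\}$: for $K_{F}=I_{F}$ this holds only when $F'|F$ is totally ramified (in general $F''$, defined by $W_{F''}=W_{F'}I_{F}$, is the maximal unramified subextension of $F'|F$, and similarly for the other choices of $K_{F}$). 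So your reduction to the case ``$F''=F$'' of Corollary \ref{factoShapiro} starts from a false premise. Second, the identification $\mathbf{G}_{\phi}\simeq {\rm Res}_{F'|F}\bigl((\GL_{n})_{\phi'}\bigr)$ is not what part (i) of that corollary gives; it holds only in the opposite extreme $F''=F'$ (i.e. $K_{F}\subset W_{F'}$), whereas when $F''=F$ part (i) yields $(\GL_{n})_{\phi'}\simeq \mathbf{G}_{\phi}\times_{F}F'$, a different statement. A concrete counterexample: take $K_{F}=I_{F}$, $F'|F$ totally ramified of degree $d>1$, $\mathbf{G}={\rm Res}_{F'|F}\GL_{1}$ and $\phi$ trivial; then $C_{\hat\mathbf{G}}(\phi)=\hat\mathbf{G}^{I_{F}}\simeq\GM_{m}$, so $\mathbf{G}_{\phi}\simeq\GM_{m}$ is one-dimensional, while ${\rm Res}_{F'|F}\bigl((\GL_{1})_{\phi'}\bigr)=\mathbf{G}$ has dimension $d$. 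Hence your argument that $\mathbf{G}_{\phi}$ is of $\GL$-type collapses as written.

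Both conclusions are nevertheless within reach of the tools you cite, so the gap is repairable. For the existence of a strict unipotent factorization you do not need any hypothesis on $F''$: the first sentence of Corollary \ref{factoShapiro} (ii) says unconditionally that $\phi$ admits one if and only if $\phi'$ does, and $\phi'$ does by \ref{computGL}. For the $\GL$-type claim, instead of the false identification, use the lemma preceding that corollary together with the explicit extension $\varphi'$ of \ref{computGL}: it gives
$$\bigl(C_{\hat\mathbf{G}}(\phi),\alpha_{\varphi}\bigr)\simeq
{\rm Ind}_{W_{F'}/K_{F'}}^{W_{F}/K_{F}}\Bigl(\prod_{i}{\rm Ind}_{W_{\sigma_{i}}}^{W_{F'}}\GL_{e_{i}}\Bigr)
\simeq \prod_{i}{\rm Ind}_{W_{\sigma_{i}}}^{W_{F}}\GL_{e_{i}},$$
with trivial action on each $\GL_{e_{i}}$ and with an \'epinglage fixed by $\alpha_{\varphi}$, whence $\mathbf{G}_{\phi}\simeq\prod_{i}{\rm Res}_{F_{i}|F}\GL_{e_{i}}$ for $W_{F_{i}}=W_{\sigma_{i}}\subset W_{F'}$, which is visibly of $\GL$-type; alternatively, use part (i) correctly, $\mathbf{G}_{\phi}\simeq{\rm Res}_{F''|F}\mathbf{G}''_{\phi''}$, but then you must still argue that $\mathbf{G}''_{\phi''}$ is of $\GL$-type, which your present write-up does not do.
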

\begin{proof}
By definition of being of $\GL$-type,
  we may assume that $\mathbf{G}={\rm Res}_{F'|F}\mathbf{G'}$ for $\mathbf{G'}=\GL_{n}$ and $F'$ a
  finite separable extension of $F$. By Corollary \ref{factoShapiro}, we may assume that
  $\mathbf{G}=\GL_{n}$, which has just been treated.
\end{proof}

This proposition allows us to consider the following statement, that depends on an admissible parameter $\phi:\,
K_{F}\To{}{^{L}\mathbf{G}}$ (with $\mathbf{G}$ unspecified) :
\begin{center}
  $\UC_{F}(\phi)$ : 
  \begin{tabular}{|l}
for any  strict unipotent factorization $\xi:\,
{^{L}\mathbf{G}_{\phi}}\To{}{^{L}\mathbf{G}}$, \\
there is an equivalence of categories  $\Rep_{1}(G_{\phi})\simto\Rep_{\phi}(G)$  \\
  that extends the transfer map $\xi_{*}:\,\Irr{1}{G_{\phi}}\To{}\Irr{\phi}{G}$.
\end{tabular}
\end{center}
Note, that because of Remark \ref{Rkstrictunip}, replacing ``any'' by ``one'' in the first line
gives an equivalent statement. 
Again, we will denote by $\UC_F(\phi)^{-}$ the same statement without the compatibility with transfer.

\begin{lemme} \label{equivstatements}
  The following are equivalent.
  \begin{enumerate}
  \item Statement $\EC_F(\phi',\xi)$ is true for all $F$, $\phi'$ and $\xi$ satisfying the required conditions.
  \item Statement $\UC_F(\phi)$ is true for all $F$ and $\phi$.
  \item Statement $\UC_F(\phi)$ is true for all $F$ and $\phi$ pertaining to $\mathbf{G}=\GL_{n}$, and
    statement $\EC_F(1,\xi)$ is true for 
all base change $\xi:\, {^{L}\GL_{n}}\To{}{^{L}{\rm Res}_{F'|F}\GL_{n}}$, 
with $W_{F'}K_{F}=W_{F}$ (and $F$ allowed to vary).
  \end{enumerate}
Moreover the same equivalence holds for statements $\EC_F(\phi',\xi)^{-}$ and $\UC_F(\phi)^{-}$.
\end{lemme}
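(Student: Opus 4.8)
The plan is to prove the equivalences $(i)\Leftrightarrow(ii)$ and $(ii)\Leftrightarrow(iii)$ by exhibiting, for each implication, a way of assembling the needed equivalences of categories out of instances of the other statement, keeping track at each stage of compatibility with the Langlands transfer on irreducibles (the parenthetical ``same equivalence holds for the $(-)$ statements'' will be automatic since at no point will we \emph{use} the transfer-compatibility, only \emph{propagate} it).

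\textbf{$(ii)\Rightarrow(i)$.} Suppose $\UC_F(\phi)$ holds for all $F,\phi$, and let $(\phi',\xi)$ be a datum as in $\EC_F(\phi',\xi)$, with $\phi=\xi\circ\phi'$. By Proposition \ref{existunipfact}, $\phi'$ admits a strict unipotent factorization $\phi':K_F\To{1\times\id}{^L\mathbf{G}'_{\phi'}}\To{\xi'}{^L\mathbf{G}'}$, and $\mathbf{G}'_{\phi'}$ is of $\GL$-type. The composite $\xi\circ\xi':{^L\mathbf{G}'_{\phi'}}\To{}{^L\mathbf{G}}$ is then a unipotent factorization of $\phi$ (the centralizer hypothesis on $\xi$ together with $C_{\hat\mathbf{G}'}(\phi')=\hat\mathbf{G}'_{\phi'}$ forces $\xi\circ\xi'$ to induce $\hat\mathbf{G}'_{\phi'}\simto C_{\hat\mathbf{G}}(\phi)$). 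By Proposition \ref{propunipfacto}, $\xi\circ\xi'$ factors, up to $\hat\mathbf{G}$-conjugacy and an \'epinglage-preserving $W_F$-equivariant isomorphism $\iota:\hat\mathbf{G}'_{\phi'}\simto\hat\mathbf{G}_\phi$, through a strict unipotent factorization of $\phi$. Now $\UC_F(\phi)$ gives $\Rep_1(G_\phi)\simto\Rep_\phi(G)$ extending $(\xi\circ\xi')_*$, and $\UC_F(\phi')$ (applied to the group $\mathbf{G}'$) gives $\Rep_1(G'_{\phi'})\simto\Rep_{\phi'}(G')$ extending $\xi'_*$; since $\iota$ induces an $F$-isomorphism $G_\phi\simto G'_{\phi'}$ identifying unipotent factors, composing the two equivalences with this identification yields $\Rep_{\phi'}(G')\simto\Rep_\phi(G)$, and the transfer maps compose correctly by functoriality of $L$-group composition on irreducibles.

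\textbf{$(i)\Rightarrow(ii)$.} This is the case $\mathbf{G}'=\mathbf{G}_\phi$, $\phi'=1$, $\xi$ a strict unipotent factorization of $\phi$: the centralizer hypothesis holds because $\hat{(\mathbf{G}_\phi)}=C_{\hat\mathbf{G}}(\phi)=C_{\hat\mathbf{G}_\phi}(1)$ by construction, so $\EC_F(1,\xi)$ \emph{is} $\UC_F(\phi)$ for that one $\xi$, and Remark \ref{Rkstrictunip} says this is enough.

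\textbf{$(ii)\Rightarrow(iii)$} is immediate (it is a restriction), so the work is $(iii)\Rightarrow(ii)$: assuming $\UC_F(\phi)$ for all $F$ and all $\GL_n$-parameters, plus $\EC_F(1,\xi)$ for all totally-Shapiro base changes $\xi:{^L\GL_n}\To{}{^L{\rm Res}_{F'|F}\GL_n}$ with $W_{F'}K_F=W_F$, deduce $\UC_F(\phi)$ for arbitrary $\mathbf{G}$ of $\GL$-type. Writing $\mathbf{G}=\prod_j{\rm Res}_{F_j|F}\GL_{n_j}$ and noting everything splits along the product, we reduce to $\mathbf{G}={\rm Res}_{F'|F}\GL_n$. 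Let $F''$ be the intermediate field with $W_{F''}=W_{F'}K_F$, and let $\phi''$ be the Shapiro mate over $F''$ of the Shapiro mate $\phi'$ of $\phi$ over $F'$. Corollary \ref{factoShapiro} gives $\mathbf{G}_\phi\simeq{\rm Res}_{F''|F}\mathbf{G}''_{\phi''}$ with $\mathbf{G}''={\rm Res}_{F'|F''}\GL_n$, together with a factorization of any strict unipotent factorization $\xi$ of $\phi$ through $\xi_u:{^L\mathbf{G}_\phi}\To{}{^L{\rm Res}_{F'|F}\mathbf{G}'_{\phi'}}$ (an isomorphism when $F''=F'$, i.e. when $K_{F'}=K_F$) and a Shapiro-type lift $\tilde{\xi'}$. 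The strategy is a three-step chain: (1) the Shapiro ``yoga'' identifies $\Rep_1(G_\phi)=\Rep_1((\mathbf{G}''_{\phi''})(F''))$ and $\Rep_\phi(G)=\Rep_{\phi'}(\GL_n(F'))$ and reduces compatibility with $\tilde{\xi'}_*$ to compatibility with $\xi'_*$ (Remark \ref{RkShapiro}); (2) over $F''$ the relevant inertia groups satisfy $K_{F''}\supset\ldots$ — precisely, if $K_F$ is $I_F$ then $K_{F''}=K_{F'}$ and the residual step is genuinely $\UC$ for $\GL_n$ over $F'$, which is hypothesis (iii); more generally one uses the base change $\xi_u$ as an instance of $\EC_F(1,\cdot)$ with $W_{F'}K_{F''}=W_{F''}$ — wait, the relevant base change here is ${^L\GL}$ over $F''$ against ${^L{\rm Res}_{F'|F''}\GL}$, and $W_{F'}K_{F''}=W_{F'}\cap(W_{F'}K_F)\cdot K_{F''}=\ldots=W_{F''}$ by construction of $F''$, so this is exactly a base change of the type allowed in (iii); (3) finally descend along ${\rm Res}_{F''|F}$, which is harmless since $\Rep_{\oZl}({\rm Res}_{F''|F}\mathbf{H}(F))=\Rep_{\oZl}(\mathbf{H}(F''))$ canonically and the transfer is compatible. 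Assembling: $\Rep_1(G_\phi)\simto\Rep_1({\rm Res}_{F'|F}(\mathbf{G}'_{\phi'})(F))$ via $\xi_u$-transfer (hypothesis-type base change, (iii)), then $\simto\Rep_{\phi'}(\GL_n(F'))$ via $\UC_{F'}$ for $\GL_n$ (hypothesis (iii)), then identify with $\Rep_\phi(G)$ by Shapiro. Composing gives $\UC_F(\phi)$.

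\textbf{Main obstacle.} The genuinely delicate point is bookkeeping in the $(iii)\Rightarrow(ii)$ reduction: one must check that \emph{every} strict unipotent factorization $\xi$ of $\phi$ — not just one — is handled, using Remark \ref{Rkstrictunip} that it suffices to treat one up to twist, and that the intermediate base change $\xi_u$ occurring in Corollary \ref{factoShapiro} genuinely lands among the base changes permitted in (iii), i.e. that the relevant ``totally Shapiro'' condition $W_{F'}K_{F''}=W_{F''}$ holds by the defining property of $F''$. The compatibility-with-transfer statements then follow at each step because transfer of irreducibles is functorial for composition of $L$-morphisms and the Shapiro bijection intertwines the relevant transfer maps (Remark \ref{RkShapiro}); the $(-)$ versions follow by simply deleting every reference to transfer from the above.
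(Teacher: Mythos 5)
Your easy directions and your $(ii)\Rightarrow(i)$ are essentially the paper's own route: choose a strict unipotent factorization $\xi'$ of $\phi'$ (Proposition \ref{existunipfact}), straighten $\xi\circ\xi'$ by Proposition \ref{propunipfacto}, and compose the two equivalences granted by $\UC_F(\phi')$ and $\UC_F(\phi)$ with the equivalence induced by the $F$-isomorphism dual to $\iota$. One gloss: that this last equivalence is compatible with the transfer $({^{L}\iota})_{*}$ is not bare ``functoriality''; the paper has to invoke \cite[Prop. 5.2.5]{Hainesstable} for exactly this point (it matters only for the non-minus statements, but it should be said). Likewise $(ii)\Rightarrow(iii)$ is not literally ``a restriction'': the base-change half of (iii) is an instance of $\UC$ for the trivial parameter of ${\rm Res}_{F'|F}\GL_{n}$ (or one routes through (i)), which is harmless but worth spelling out.

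The genuine gap is in $(iii)\Rightarrow(ii)$, precisely at the step you wave through. After the Shapiro reduction and the passage to $F''=F$ (i.e. $W_{F'}K_{F}=W_{F}$), the morphism still to be handled is the base change $\xi_{u}:{^{L}\mathbf{G}_{\phi}}\To{}{^{L}{\rm Res}_{F'|F}(\mathbf{G}_{\phi}\times_{F}F')}$ for the group $\mathbf{G}_{\phi}$, and by \ref{computGL} this group is in general ${\rm Res}_{F_{0}|F}\GL_{e}$ for a nontrivial $K_{F}$-unramified extension $F_{0}$, \emph{not} $\GL$ over the base field. So $\EC_{F}(1,\xi_{u})$ is not an instance of the hypothesis in (iii), whose base changes have source ${^{L}\GL_{n}}$; your identification ``the relevant base change here is $^{L}\GL$ over $F''$ against $^{L}{\rm Res}_{F'|F''}\GL$'' conflates $\mathbf{G}''_{\phi''}$ with $\GL_{n}$, and the aside ``if $K_{F}=I_{F}$ then $K_{F''}=K_{F'}$'' is false (for $F'|F$ totally ramified one has $F''=F$ and $K_{F''}=I_{F}\varsupsetneq I_{F'}$). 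The paper closes this with a further reduction you are missing: since $\mathbf{G}_{\phi}$ splits over $F_{0}$ with $K_{F_{0}}=K_{F}$ while $W_{F'}K_{F}=W_{F}$, the fields $F_{0}$ and $F'$ are disjoint, hence ${\rm Res}_{F_{0}|F}\GL_{e}\times_{F}F'={\rm Res}_{F_{0}F'|F'}\GL_{e}$ and $\xi_{u}$ is induced, in the Shapiro sense, from the base change ${^{L}\GL_{e}}\To{}{^{L}{\rm Res}_{F_{0}F'|F_{0}}\GL_{e}}$ over $F_{0}$; one checks $W_{F_{0}F'}K_{F_{0}}=W_{F_{0}}$, so this \emph{is} an allowed base change in (iii) with base field $F_{0}$ (this is why (iii) lets $F$ vary), and a second application of Remark \ref{RkShapiro} transports the resulting equivalence back. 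Without this step the hypothesis in (iii) simply does not apply to $\xi_{u}$, and your chain of equivalences does not assemble.
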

\begin{proof}
  $i)\Rightarrow iii)$ is clear. 
To prove  $ii)\Rightarrow i)$, start with $(\phi',\xi)$, choose  a strict unipotent factorization
$\xi'$ of $\phi'$  and consider the diagram
$$\phi:\, K_{F} \To{1} {^{L}\mathbf{G}'_{\phi'}}\To{\xi'}{^{L}\mathbf{G}'}\To{\xi}{^{L}\mathbf{G}}.$$
Then $\xi\circ\xi'$ is a unipotent factorization of $\phi$, albeit not strict a priori.
By Proposition \ref{propunipfacto},
 it is equivalent to the composition $\xi''\circ\alpha$ of a strict unipotent factorization and a
$W_{F}$-invariant outer automorphism $\alpha$ of $\hat\mathbf{G}_{\phi}$. A $W_{F}$-invariant outer automorphism of
$\hat\mathbf{G}_{\phi}$ induces an $F$-automorphism of $\mathbf{G}_{\phi}$, well-defined up to
$G_{\phi}$-conjugacy, hence an endo-equivalence of categories of $\Rep(G_{\phi})$ and in particular
of $\Rep_{1}(G_{\phi})$ (since the trivial representation is fixed). By \cite[Prop. 5.2.5]{Hainesstable}, 
this equivalence is known to be compatible with Langlands' transfer.
Therefore, using this equivalence and the ones granted by $\UC_F(\phi')$ and $\UC_F(\phi)$,
we get $\EC(\phi',\xi)$.

Let us prove $iii)\Rightarrow ii)$. We want to check $\UC_F(\phi)$ for any $\phi$. It is sufficient to
do so when $\mathbf{G}={\rm Res}_{F'|F}(\GL_{n})$. Let $\xi$ be a strict unipotent factorization of
$\phi$. We have a factorization $\xi=\tilde{\xi'}\circ\xi_{u}$ as in Corollary \ref{factoShapiro} ii). By
hypothesis, and thanks to Remark \ref{RkShapiro}, we can find an equivalence of categories associated to $\tilde{\xi'}$,
so we are left with finding one associated to $\xi_{u}$. With the notation of Corollary \ref{factoShapiro} i), we
 have a further factorization of $\xi_{u}$ :
$$\xi_{u}: {^{L}\mathbf{G}_{\phi}} \simto {^{L}{\rm Res}_{F''|F}(\mathbf{G}''_{\phi''})}
 \To{}{^{L}{\rm Res}_{F'|F}(\mathbf{G}'_{\phi'})}$$
which shows that it is sufficient to do it when $F''=F$, \emph{i.e.} $W_{F'}K_{F}=W_{F}$. In this
case, $\xi_{u}$ is a base change $L$-homomorphism 
$ \xi_{u}:\, {^{L}\mathbf{G}_{\phi}} \To{} {^{L}{\rm Res}_{F'|F}(\mathbf{G}_{\phi}\times_{F}F')}.$
Now, $\mathbf{G}_{\phi}$ is of $\GL$-type and ``$K_{F}$-unramified'',
in the sense that it splits over an extension $F_{0}$ of $F$ such that $K_{F_{0}}=K_{F}$.
So we need an equivalence associated to a base change homomorphism of the form
$$ \xi_{u}:\, {^{L}({\rm Res}_{F_{0}|F}\GL_{n})} \To{} {^{L}{\rm Res}_{F'|F}({\rm
    Res}_{F_{0}|F}\GL_{n}\times_{F}F')}$$
where $F_{0}$ is a $K_{F}$-unramified extension of $F$.
But then $F'$ and $F_{0}$ are disjoint, so we have 
${\rm Res}_{F_{0}|F}\GL_{n}\times_{F}F' = {\rm Res}_{F'F_{0}|F'}\GL_{n}$ and the above
$L$-homomorphism takes the form
$$  {^{L}({\rm Res}_{F_{0}|F}\GL_{n})} \To{} {^{L}{\rm Res}_{F_{0}|F}({\rm
    Res}_{F_{0}F'|F_{0}}\GL_{n}})$$
and is thus ``induced'' from the base change $L$-homomorphism over $F_{0}$
$$  {^{L}(\GL_{n})} \To{} {^{L}{\rm   Res}_{F_{0}F'|F_{0}}\GL_{n}}. $$
Using Remark \ref{RkShapiro} again, it is enough to associate an equivalence to the latter $L$-homomorphism, but
this is precisely part of the hypothesis in iii).
\end{proof}

\alin{Proof of Theorem \ref{transferql}} Here we assume that $K_{F}=I_{F}$ and
we will prove that $\EC_F(\phi',\xi)$ holds true for all $\phi'$ and
$\xi$ that  satisfy the
required conditions. It is enough to prove the statements in point iii) of the previous
lemma. We will denote by $\HC(q,n)$ the extended Iwahori-Hecke algebra of type $A_{n-1}$
with parameter $q$ over $\oQl$. We will also denote by $q_{F}$ the cardinality of the
residue field of $F$.

\emph{Totally ramified base change of the unipotent block of $\GL_{n}$.}
We use Zelevinski's classification $m\mapsto Z(m)$ 
of $\Irr{\oQl}{\GL_{n}(F)}$ in terms of multisegments of
 unramified characters of $F^{\times}$.
Let $F'|F$ be a totally ramified extension. The base change for $\GL_{1}$ is induced
by the norm map $(F')^{\times}\To{} F^{\times}$. Since the latter induces an isomorphism
${F'}^{\times}/\OC_{F'}^{\times} \simto F^{\times}/\OC_{F}^{\times}$, the base change is a
bijection on unramified characters, hence also on multisegments  $m\mapsto m'$.
 Since base change is compatible with parabolic induction, and thus with the Langlands
 quotient construction, it is also compatible with the Zelevinski construction, in the
 sense that the base change of $Z(m)$ has to be $Z(m')$. 
Now,  by a theorem of
Borel, there is a natural equivalence of categories between $\Mod(\HC(q_{F},n))$ and
$\Rep_{1}(\GL_{n}(F))$ which takes Rogawski's classification of simple modules
of $\HC(q_{F},n)$ in terms of multisegments of characters of $\ZM^{n}$
 to Zelevinski's classification, see \cite{Rogawski}.
The desired equivalence between $\Rep_{1}(\GL_{n}(F))$ and $\Rep_{1}(\GL_{n}(F'))$ hence
follows from the equality
 $q_{F'}=q_{F}$.

\emph{Property $\UC_F(\phi)$ for $\GL_{n}$.} As in paragraph \ref{computGL}, let us write
$\phi=\hat\phi\times \id_{I_{F}}$ 
and decompose  $\hat\phi=\hat\phi_{1}\oplus\cdots\oplus\hat\phi_{r}$. If $r>1$, paragraph
\ref{computGL} tells us that any unipotent factorization of $\phi$ factors through a Levi
embedding that induces an isomorphism of centralizers. Thanks to Example \ref{exempleequiv},
 we may thus assume $r=1$.
In this case, let $\sigma$ be an irreducible constituent of $\hat\phi$. Its stabilizer
$W_{\sigma}$ in $W_{F}$ is the Weil group $W_{F_{f}}$ of ``the'' unramified extension of
degree $f=[W_{F}:W_{\sigma}]$ and paragraph \ref{computGL} tells us that 
$\mathbf{G}_{\phi}\simeq {\rm  Res}_{F_{f}|F}(\GL_{e})$ where $e=n/(f\dim\sigma)$.  
Pick an extension $\tilde\sigma$ of $\sigma$ to $W_{F_{f}}$
and put 
$\hat\varphi:=\oQl^{e}\otimes{\rm Ind}_{W_{F_{f}}}^{W_{F}}(\tilde\sigma)$. We get an extension $\varphi$ of $\phi$ to $W_{F}$,
whose associated
strict unipotent factorization $\xi_{\varphi}$ 
has the following effect on parameters. Identify $\Phi(\mathbf{G},\oQl)$, 
resp. $\Phi(\mathbf{G}_{\phi},\oQl)$,
with the set of (classes of)
Frobenius-semisimple  continuous $\oQl$-representations of $W_{F}'$ of dimension $n$, 
resp. of $W_{F_{f}}'$ of dimension $e$. 
Then the transfer
map $\xi_{\varphi,*}$  is given by
$$ 
 \rho\in\Phi(\mathbf{G}_{\phi},\oQl)
\mapsto \cind{W_{F_{f}}'}{W_{F}'}{\tilde\sigma\otimes\rho} \in \Phi(\mathbf{G},\oQl),$$
For example, denoting by ${\rm Sp}_{e}$  the special representation of dimension $e$ 
(associated to the Steinberg representation), we see that 
$\xi_{\varphi,*}({\rm Sp}_{e})={\rm  Sp}_{e}\otimes {\rm Ind}_{W_{F_{f}}}^{W_{F}}(\tilde\sigma)$. Let us translate this in terms of
irreducible representations. Let $\pi$ be the supercuspidal representation of 
$\GL_{f{\rm  dim}(\sigma)}(F)$ that corresponds to $ {\rm Ind}_{W_{F_{f}}}^{W_{F}}(\tilde\sigma)$ via the LLC. To any pair $(\chi,a)$ with
$\chi$ an unramified character of $F_{f}^{\times}$ and $a\in \NM$, we associate the segment
$\Delta_{\pi}(\chi,a)=(\pi_{\chi},\pi_{\chi}\otimes\nu,\cdots,\pi_{\chi}\otimes\nu^{a-1})$ where
$\nu=|\det|_{F}$ and $\pi_{\chi}=\pi\otimes(\chi^{1/f}\circ\det)$ (which is independent of the choice
of an $f^{\rm th}$-root of $\chi$). This extends to a bijection
$m\mapsto m_{\pi}$ between  multisegments of unramified characters of $F_{f}^{\times}$ and
 ``multisegments of type $\pi$''. Then the formula above shows that the transfer map
 $\xi_{\varphi,*}:\Irr 1 {G_{\phi}}\To{}\Irr\phi G$ takes $Z(m)$ to $Z(m_\pi)$ in Zelevinski's
 notation (or equivalently $L(m)$ to $L(m_{\pi})$), compare \cite[\S 2]{HeSMF}.


Now let us put $\HC_{\phi}:=\HC(q_{F}^{f},e)$.
Thanks to their theory of simple types, 
Bushnell and Kutzko produce ``natural'' equivalence\emph{s} of categories between 
$\Mod(\HC_{\phi})$ and $\Rep_{\phi}(\GL_{n}(F))$ 
\cite[Thm 7.5.7]{BK}. These equivalences are  unramified 
twists of each other \cite[Prop. 7.5.10]{BK}.
  In light of the above, we will normalize the equivalence so that
 it takes the sign character of $\HC_{\phi}$ to the ``generalized'' Steinberg representation 
${\rm  St}_{e}(\pi)$. Then, the compatibility of Bushnell-Kutzko equivalences with normalized
parabolic induction \cite[Thm. 7.6.1]{BK}
and unramified twisting \cite[Prop. 7.5.12]{BK} shows that  it also takes the simple module $M(m)$ associated to the multisegment $m$
to $Z(m_{\pi})$.
On the other hand, as recalled previously, Borel's theorem produces a ``canonical'' equivalence of categories between
$\Mod(\HC_{\phi})$ and $\Rep_{1}(G_{\phi})$, that takes $M(m)$ to $Z(m)$.
By composition we get an   equivalence between $\Rep_{1}(G_{\phi})$ and
$\Rep_{\phi}(\GL_{n}(F))$ that takes $Z(m)$ to $Z(m_{\pi})$, as desired.

\begin{rak}
  We may ask whether an equivalence as in statement $\EC_F(\phi',\xi)$ is unique. In view of the above
  discussion, this reduces to asking whether an auto-equivalence $\alpha$ of $\Mod(\HC(q,n))$ that
  ``preserves simple modules'' (in the sense that $\alpha(M)\simeq M$ for each simple module $M$) is
  isomorphic to the identity functor. For this, one has to compute the Picard group of $\HC(q,n)$
  over its center.
\end{rak}

\alin{``Proof'' of Theorem \ref{transferzltame}} \label{prooftransferzl}
Here we explain how Theorem \ref{transferzltame} follows from constructions in \cite{Datequiv}.
So we assume that $K_{F}=I_{F}^{(\ell)}$
and we consider statements $\EC_F(\phi',\xi)$ for $\oZl$-blocks when both $\phi'$ and $\xi$ are \emph{tame.} Recall
that this means that $\phi'_{|P_{F}}$, resp. $\xi_{|P_{F}}$,
 is equivalent to the trivial parameter. We note that
Lemma \ref{equivstatements}
 remains true if we impose tameness of $\phi'$, $\xi$ and $\phi$ in each item i), ii) or
iii). This is because a unipotent factorization of a tame parameter is tame, and
 Shapiro bijections preserve tameness. In this tame setting we can
 reduce further these statements as follows.
 \begin{lem}
Assertions i), ii) and iii) of Lemma \ref{equivstatements} restricted to tame parameters are equivalent to :
     \begin{enumerate}
     \item[iv)] Statement $\EC_F(\phi', \xi)$ is true in the following
       cases :
       \begin{enumerate}
       \item $\phi'$ is tame and $\xi$ is an unramified automorphic
         induction $^{L}{\rm
           Res}_{F_{f}|F}(\GL_{n/f})\To{}{^{L}\GL_{n}}$
       \item $\phi'=1$ and $\xi$ is a totally $\ell'$-ramified base
         change $ {^{L}\GL_{n}}\To{}{^{L}{\rm Res}_{F'|F}\GL_{n}}$.
       \end{enumerate}
     \end{enumerate}
Moreover the same equivalence holds for statements $\EC_F(\phi',\xi)^{-}$ and $\UC_F(\phi)^{-}$.
 \end{lem}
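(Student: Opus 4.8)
The plan is to run the same reductions as in the proof of Lemma \ref{equivstatements}, but keeping track of tameness at every stage, and to notice that under the tameness hypothesis these reductions terminate exactly at the two $L$-homomorphisms isolated in iv). The implication from the restricted forms of i)/ii)/iii) to iv) is the easy direction: as recalled above, the restricted versions of i), ii), iii) are mutually equivalent, and both an unramified automorphic induction and a totally $\ell'$-ramified base change are tame $L$-homomorphisms, so (a) and (b) are particular cases of the restricted form of i). Conversely, among the two ingredients of iii) restricted to tame parameters, the one about base change, $\EC_{F}(1,\xi)$ for $W_{F'}K_{F}=W_{F}$, is literally case (b): for $K_{F}=I_{F}^{(\ell)}$ the condition $W_{F'}I_{F}^{(\ell)}=W_{F}$ amounts to $F'|F$ being totally ramified of degree prime to $\ell$.

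So the real work is to derive $\UC_{F}(\phi)$ for $\mathbf{G}=\GL_{n}$ and $\phi$ tame from cases (a) and (b). Following \ref{computGL} I would write $\phi=\hat\phi\times\id_{K_{F}}$, decompose $\hat\phi=\hat\phi_{1}\oplus\cdots\oplus\hat\phi_{r}$, and invoke Example \ref{exempleequiv} (normalized parabolic induction, compatible with transfer) to reduce to the case $r=1$. The crucial gain from tameness is that the irreducible constituent $\sigma$ of $\hat\phi$ must be trivial on $P_{F}$, hence factors through the procyclic, in particular abelian, group $I_{F}^{(\ell)}/P_{F}$; thus $\sigma$ is a \emph{character}. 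Consequently $\mathbf{G}_{\phi}\simeq{\rm Res}_{F_{f}|F}(\GL_{e})$ with $n=ef$, and if $\tilde\sigma$ is an extension of $\sigma$ to $W_{F_{f}}$ — chosen valued in $\oZl^{\times}$, which is possible since $\sigma$ has order prime to $\ell$ on inertia — then the resulting strict unipotent factorization $\xi_{\varphi}$ is the composition of the twisting automorphism $\rho\mapsto\tilde\sigma\otimes\rho$ of ${^{L}({\rm Res}_{F_{f}|F}\GL_{e})}$ with the unramified automorphic induction ${^{L}({\rm Res}_{F_{f}|F}\GL_{e})}\To{}{^{L}\GL_{n}}$. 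By the remark following the definition of $\UC_{F}(\phi)$ it is enough to treat this one $\xi_{\varphi}$.

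I would then realize the required equivalence as a composition of two. Set $\phi'':=\sigma\otimes\oQl^{e}$, a tame $K_{F_{f}}=K_{F}$-parameter of ${\rm Res}_{F_{f}|F}\GL_{e}$; the unramified automorphic induction induces an isomorphism $C_{\widehat{{\rm Res}_{F_{f}|F}\GL_{e}}}(\phi'')\simto C_{\hat\mathbf{G}}(\phi)$ — both sides being $\GL_{e}^{f}$ because the $W_{F}$-conjugates of $\sigma$ are distinct characters — so case (a) applies and yields an equivalence $\Rep_{\phi''}(\GL_{e}(F_{f}))\simto\Rep_{\phi}(\GL_{n}(F))$ compatible with transfer. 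On the other hand, twisting by the character $\tilde\sigma\circ\mathrm{Art}_{F_{f}}^{-1}\circ\det$ of $\GL_{e}(F_{f})$ gives an equivalence of $\oZl$-blocks $\Rep_{1}(\GL_{e}(F_{f}))\simto\Rep_{\phi''}(\GL_{e}(F_{f}))$, compatible with the transfer $\rho\mapsto\tilde\sigma\otimes\rho$ by \cite[Prop.~5.2.5]{Hainesstable} (as in Example \ref{examples} iii)). Composing, I obtain the equivalence $\Rep_{1}(G_{\phi})\simto\Rep_{\phi}(\GL_{n}(F))$ required by $\UC_{F}(\phi)$, whose transfer is $\rho\mapsto{\rm ind}_{W_{F_{f}}}^{W_{F}}(\tilde\sigma\otimes\rho)=\xi_{\varphi,*}$. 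The versions $\EC_{F}(\phi',\xi)^{-}$ and $\UC_{F}(\phi)^{-}$ are handled identically, simply omitting the transfer-compatibility verifications.

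The main obstacle, as usual for a statement of this bookkeeping nature, is not a single deep step but the discipline of checking that every reduction stays inside the hypotheses under which (a) and (b) are assumed: that the $r>1$ reduction through a Levi embedding and the passage from $\phi$ to $\phi''$ both preserve tameness \emph{and} the centralizer-isomorphism condition, that case (a) as stated (where $\phi'$ is only required to be tame) is indeed broad enough to cover $\phi''=\sigma\otimes\oQl^{e}$, and — the one point where one must be a little careful — that the character-twist equivalence is an equivalence of $\oZl$-blocks rather than merely of the $\oQl$-level categories, which is exactly why $\tilde\sigma$ must be chosen $\oZl^{\times}$-valued. Conceptually, the observation that makes the argument short is that tameness of $\phi$ forces $\sigma$ to be a character, turning the twist by $\tilde\sigma$ appearing in \ref{computGL} into an ordinary character twist already covered by the literature.
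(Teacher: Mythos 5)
Your proposal is correct and follows essentially the same route as the paper's proof: the easy implication from the tame form of i), the observation that iv)(b) is literally the base-change ingredient of iii), the reduction via Example \ref{exempleequiv} to $\UC_F(\phi)$ with $\mathbf{G}_{\phi}$ quasi-simple, the use of tameness to force $\sigma$ to be a character so that $\mathbf{G}_{\phi}\simeq{\rm Res}_{F_{f}|F}\GL_{e}$, the application of case (a) to the central $\sigma$-twisted parameter, and the final character twist back to $\Rep_{1}$. The only differences are presentational: the identification of the strict unipotent factorization with (unramified automorphic induction)$\circ$(central twist by $\tilde\sigma$), which you assert by inspecting monomial matrices, is exactly what the paper verifies via the normalizer $\NC_{\varepsilon}$ and the permutation-matrix section, and your explicit choice of an $\oZl^{\times}$-valued $\tilde\sigma$ makes the $\oZl$-block twist slightly more careful than the paper's appeal to \cite[10.2]{BorelCorvallis}.
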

 \begin{proof}
  $i) \Rightarrow iv)$ is clear, so we only need to check that $iv)\Rightarrow
   iii)$, and in fact it is sufficient to prove that $iv)(a)$ implies property $\UC_F(\phi)$
   for tame parameters $\phi$ of $\mathbf{G}=\GL_{n}$.  Write $\phi=\hat\phi\times
   \id_{I_{F}^{(\ell)}}$ with $\hat\phi$ an $n$-dimensional representation of
   $I_{F}^{(\ell)}/I_{F}$. By \ref{computGL}, we know that if $\mathbf{G}_{\phi}$ is not
   quasi-simple then any unipotent factorization factors through some Levi
   subgroup. Thanks to Example \ref{exempleequiv}, we may thus assume that
   $\mathbf{G}_{\phi}$ is quasi-simple. In this case, \ref{computGL} tells us that we can
   extend $\hat\phi$ to $\hat\varphi\simeq \oQl^{e}\otimes {\rm
     Ind}_{W_{\sigma}}^{W_{F}}(\tilde\sigma)$, where $\sigma$ is an irreducible summand of
   $\hat\phi$, and $\tilde\sigma$ is an extension of $\sigma$ to its normalizer
   $W_{\sigma}$ in $W_{F}$. Factorization (\ref{factophi}) of $\varphi$ then reads
$$ \varphi:\, W_{F}\To{(1,\id_{W})} 
{^{L}\mathbf{G}_{\phi}}= C_{\hat\mathbf{G}}(\phi)
\rtimes_{\alpha_{\varphi}} W_{F} \To{\id.\varphi} \GL_{n}\times W_{F},$$
with $\mathbf{G}_{\phi}= {\rm Res}_{F_{\sigma}|F}(\GL_{e})$ for the finite extension
$F_{\sigma}$ such that  $W_{F_{\sigma}}=W_{\sigma}$.
In our tame context, since $I_{F}/P_{F}$ is abelian, $W_{\sigma}$ contains $I_{F}$ hence
$F_{\sigma}=F_{f}$ is the unramified extension of some degree $f$ over $F$.  Moreover,
since $\sigma$ has dimension $1$, we have  $f=n/e$ and $C_{\hat\mathbf{G}}(\phi)$ identifies with the diagonal
Levi subgroup $(\GL_{e})^{f}$ of $\GL_{n}$.

Now, looking at the unipotent factorization above, we see that it involves the ``right groups''
 ${\rm Res}_{F_{f}|F}(\GL_{e})$ and $\GL_{n}$, but the morphism of $L$-groups $\id.\varphi$ is \emph{not} the
automorphic induction morphism. So we look for another factorization (not unipotent) of
$\varphi$,  involving the same groups but with the automorphic induction morphism.

We know that, as with any extension of $\phi$,  the group $\hat\varphi(W_{F})$ is
contained in the normalizer $\NC=\NC_{\hat\mathbf{G}}(C_{\hat\mathbf{G}}(\phi))$. For
formal reasons, $\varphi(W_{F})$ is therefore contained in the subgroup
$\NC\times_{\NC/\NC^{\circ}}W_{F}$ of $^{L}\mathbf{G}$, where the fibered product is for
the composition $W_{F}\To{\hat\varphi}\NC\To{}\NC/\NC^{\circ}$.
Actually, if $\varepsilon$ denotes any diagonal \'epinglage of 
$C_{\hat\mathbf{G}}(\phi)\simeq (\GL_{e})^{f}$, we know that $\hat\varphi(W_{F})$ is contained
in the normalizer $\NC_{\varepsilon}$ of this \'epinglage in $\NC$, and therefore
$\varphi(W_{F})\subset \NC_{\varepsilon}\times_{\NC/\NC^{\circ}}W_{F}$.

Now the point is that $\NC^{\circ}=C_{\hat\mathbf{G}}(\phi)$ (because it is a Levi
subgroup), $\NC_{\varepsilon}^{\circ}=Z(C_{\hat\mathbf{G}}(\phi))$, 
and we can find  a section morphism 
$\NC_{\varepsilon}/\NC_{\varepsilon}^{\circ}= \NC/\NC^{\circ} \injo \NC_{\varepsilon}$
(taking permutation matrices).
Then the action $\alpha_{\varepsilon}$ of $W_{F}$ on $C_{\hat\mathbf{G}}(\phi)$ through $\NC/\NC^{\circ}$
makes the semi-direct product $C_{\hat\mathbf{G}}(\phi)\rtimes_{\alpha_{\varepsilon}}
W_{F}$ isomorphic to $^{L}\mathbf{G}_{\phi}$, and identifies the semi-direct product
$Z(C_{\hat\mathbf{G}}(\phi))\rtimes_{\alpha_{\varepsilon}} W_{F}$  with
$Z(\hat\mathbf{G}_{\phi})\rtimes  W_{F}\subset {^{L}\mathbf{G}}_{\phi}$.
Moreover this section induces an isomorphism 
$C_{\hat\mathbf{G}}(\phi)\rtimes (\NC/\NC^{\circ})\simto \NC$, which in turn induces
an isomorphism
$$\iota:\, ^{L}\mathbf{G}_{\phi}= C_{\hat\mathbf{G}}(\phi)\rtimes_{\alpha} W_{F} =
(C_{\hat\mathbf{G}}(\phi)\rtimes (\NC/\NC^{\circ}))\times_{\NC/\NC^{\circ}}W_{F}
\simto \NC\times_{\NC/\NC^{\circ}}W_{F}.$$
Now consider $\varphi':=\iota^{-1}\circ\varphi : W_{F}\To{}{^{L}\mathbf{G}_{\phi}}$ and
$\xi$ the composition of $\iota$ with the inclusion of $\NC\times_{\NC/\NC^{\circ}}W_{F}$
into $^{L}\mathbf{G}$. By construction, we have a factorization
$$ \varphi:\, W_{F}\To{\varphi'}{^{L}\mathbf{G}_{\phi}}={^{L}{\rm Res}_{F_{f}|F}\GL_{e}} \To{\xi} {^{L}\mathbf{G}}$$
and $\xi$ is the automorphic induction $L$-morphism.
Restricting to $I_{F}^{(\ell)}$, we get a factorization
$$ \phi:\, I_{F}^{(\ell)}\To{\phi'} {^{L}\mathbf{G}_{\phi}}=
{^{L}{\rm Res}_{F_{f}|F}\GL_{e}} \To{\xi} {^{L}\mathbf{G}}$$ where $\phi'$ is
extendable to $W_{F}$ (namely to $\varphi'$) and $\xi$  induces an isomorphism on
centralizers. 

Now, hypothesis
iv)(a) provides us with an equivalence of categories
$\Rep_{\phi'}(\mathbf{G}_{\phi})\simto\Rep_{\phi}(\mathbf{G})$, 
but  what we need is  an equivalence $\Rep_{1}(\mathbf{G}_{\phi})\simto\Rep_{\phi}(\mathbf{G})$.
For this, observe that $\varphi'$ factors through $Z(\hat\mathbf{G}_{\phi})\rtimes W_{F}$
(because $\hat\varphi(W_{F})\subset \NC_{\varepsilon}$). Therefore
$\varphi'$  corresponds to a character  $G_{\phi}\To{\chi'}\oQl^{\times}$ as in 
\cite[10.2]{BorelCorvallis}. Twisting by this character provides 
an equivalence $\Rep_{1}(\mathbf{G}_{\phi})\simto
\Rep_{\phi'}(\mathbf{G}_{\phi})$ and composing with the latter gives the desired correspondence.
\end{proof}

We close this paragraph by stating that the weakened forms of iv)(a) and iv)(b) (i.e. whithout
compatibility with transfer) are proved in \cite{Datequiv}. What is missing at the moment to get the
strong form is the compatibility of the construction in loc. cit. with parabolic induction.


\begin{thebibliography}{}

\end{thebibliography}


\begin{thebibliography}{10}

\bibitem{bernstein}
J.~Bernstein.
\newblock Le centre de {B}ernstein.
\newblock In J.~Bernstein, P.~Deligne, D.~Kazhdan, and M.F. Vign\'eras,
  editors, {\em Repr\'esentations des groupes r\'eductifs sur un corps local},
  Travaux en cours. Hermann, 1984.

\bibitem{BorelCorvallis}
A.~Borel.
\newblock Automorphic {$L$}-functions.
\newblock In {\em Automorphic forms, representations and {$L$}-functions
  ({P}roc. {S}ympos. {P}ure {M}ath., {O}regon {S}tate {U}niv., {C}orvallis,
  {O}re., 1977), {P}art 2}, Proc. Sympos. Pure Math., XXXIII, pages 27--61.
  Amer. Math. Soc., Providence, R.I., 1979.

\bibitem{BK}
C.J. Bushnell and P.C. Kutzko.
\newblock {\em The admissible dual of ${GL}(n)$ via compact open subgroups}.
\newblock Number 129 in Annals Math. Studies. P.U.P., 1993.

\bibitem{BKsst}
C.J. Bushnell and P.C. Kutzko.
\newblock Semisimple types in {${\rm GL}_n$}.
\newblock {\em Compositio Math.}, 119(1):53--97, 1999.

\bibitem{Datequiv}
J.-F. Dat.
\newblock Equivalences of tame blocks for $p$-adic linear groups.
\newblock {\em
  http://webusers.imj-prg.fr/\textasciitilde{}jean-francois.dat/recherche/travaux.php}, 2015.

\bibitem{dBR}
S.~DeBacker and M.~Reeder.
\newblock Depth-zero supercuspidal {$L$}-packets and their stability.
\newblock {\em Ann. of Math. (2)}, 169(3):795--901, 2009.

\bibitem{Hainesstable}
T.~Haines.
\newblock The stable {B}ernstein center and test functions for {S}himura
  varieties.
\newblock In {\em EPSRC Durham Symposium on Automorphic Forms and Galois
  Representations, Durham, July 18-28, 2011}. London Math. Soc., 2011.

\bibitem{HaTay}
M.~Harris and R.~Taylor.
\newblock {\em The geometry and cohomology of some simple {S}himura varieties}.
\newblock Number 151 in Ann. of Math. studies. Princeton Univ. Press, 2001.

\bibitem{Heiermann_equiv}
V.~Heiermann.
\newblock Local langlands correspondence for classical groups and affine hecke
  algebras.
\newblock {\em preprint arXiv:1502.04357[math.RT]}, 2015.

\bibitem{HelmBernstein}
D.~Helm.
\newblock The {B}ernstein center of the category of smooth
  ${W}(k)[{GL}_n({F})]$-modules.
\newblock {\em preprint, http://wwwf.imperial.ac.uk/\textasciitilde{}dhelm/}, 2015.

\bibitem{HeLang}
G.~Henniart.
\newblock Une preuve simple des conjectures de {L}anglands pour ${GL}(n)$ sur
  un corps $p$-adique.
\newblock {\em Invent. Math.}, 139:439--455, 2000.

\bibitem{HeSMF}
G.~Henniart.
\newblock Une caract{\'e}risation de la correspondance de {L}anglands locale
  pour ${GL}(n)$.
\newblock {\em Bull. Soc. math. France}, 130(4):587--602, 2002.

\bibitem{KottCusp}
R.E. Kottwitz.
\newblock Stable trace formula: cuspidal tempered terms.
\newblock {\em Duke Math. J.}, 51(3):611--650, 1984.

\bibitem{Rogawski}
J.~Rogawski.
\newblock On modules over the {H}ecke algebra of a {$p$}-adic group.
\newblock {\em Invent. Math.}, 79(3):443--465, 1985.

\bibitem{VigInduced}
M.-F. Vign{\'e}ras.
\newblock Induced {$R$}-representations of {$p$}-adic reductive groups.
\newblock {\em Selecta Math. (N.S.)}, 4(4):549--623, 1998.

\bibitem{VigLanglands}
M.-F. Vign\'eras.
\newblock Correspondance de {L}anglands semi-simple pour ${GL(n,F)}$ modulo
  $\ell\neq p$.
\newblock {\em Invent. Math.}, 144:177--223, 2001.

\end{thebibliography}

\end{document}